\newcommand{\levy}{L\'{e}vy }
\newcommand{\p}{{\mathbb P}}
\newcommand{\e}{{\mathbb E}}
\newcommand{\D}{{\mathrm d}}
\newcommand{\R}{{\mathbb R}}
\newcommand{\N}{{\mathbb N}}
\renewcommand{\a}{\alpha}
\newcommand{\ind}[1]{\mbox{\rm\large  1}_{\{#1\}}}
\newcommand{\F}{{\mathcal F}}
\newcommand{\stab}{\stackrel{d_{st}}{\longrightarrow}}
\newcommand{\n}{{(n)}}
\newcommand{\eqd}{{\stackrel{{\rm d}}{=}}}
\newcommand{\convd}{{\,\stackrel{{\rm d}}{\to}\,}}
\def \cip{\stackrel{\mathbb P}{\rightarrow}}
\newcommand{\mean}{\text{\rm mean}}
\newcommand{\med}{\text{\rm med}}
\newtheorem{theorem}{Theorem}
\newtheorem{proposition}{Proposition}
\newtheorem{corollary}{Corollary}
\newtheorem{lemma}{Lemma}
\newtheorem{remark}{Remark}
\begin{document}
	\title[Estimation of some random quantities of a L\'evy process]{Optimal estimation of some random quantities of a L\'evy process}
	\author[J.\ Ivanovs and M.\ Podolskij]{Jevgenijs Ivanovs and Mark Podolskij}
	\thanks{Jevgenijs Ivanovs gratefully acknowledges financial support of Sapere Aude Starting Grant 8049-00021B ``Distributional Robustness in Assessment of Extreme Risk''.}
	\thanks{Mark Podolskij gratefully acknowledges financial support of ERC Consolidator Grant 815703
		``STAMFORD: Statistical Methods for High Dimensional Diffusions''.}
	\address{Aarhus University, Denmark}
	\keywords{conditioning to stay positive, local time, \levy processes, occupation time measure, optimal estimation, self-similarity, supremum, weak limit theorems}
	\subjclass[2000]{62M05, 62G20, 60F05 (primary), 62G15, 60G18, 60G51 (secondary)} 

\begin{abstract}
In this paper we present new theoretical results on optimal estimation of certain random quantities based on high frequency observations of a \levy process. More specifically, we investigate the asymptotic theory for the conditional mean and conditional median estimators of the supremum/infimum of a linear Brownian motion and a stable \levy process. Another contribution of our article is the conditional mean estimation of the local time and the occupation time measure of a linear Brownian motion.  We demonstrate that the new estimators are considerably more efficient compared to the classical estimators studied in e.g. \cite{asmussen_glynn_pitman1995, B86, ivanovs_zooming, Jacod98, NO11}. 
Furthermore, we discuss pre-estimation of the parameters of the underlying models, which is required for practical implementation of the proposed statistics.
\end{abstract}

\maketitle

\section{Introduction} \label{sec1}

During the past decades the increasing availability of high frequency data in economics and finance has led to an immense progress 
in high frequency statistics. In particular, high frequency functionals of It\^o semimartingales  have received
a great deal of attention in the statistical and probabilistic literature, where the focus has been on estimation of quadratic variation, realised jumps and  related (random) quantities. 
A detailed discussion of numerous high frequency methods and their applications to finance can be found in the monographs \cite{AJ14,JP12}. 

Despite large amount of literature on high frequency statistics, the question of {\em optimality} has rarely been addressed. To fix ideas we consider 
a stochastic process $(X_t)_{t \in [0,1]}$ with a known law and an associated random quantity $Q=F((X_t)_{t \in [0,1]})$, where $F$ is a measurable functional. 
%Assume that we are given the observations $(X_{i/n})_{i\in[1:n]}$ over a uniform grid, where $[1:n]=\{1,\ldots,n\}$.
 The major problem of interest is outlined by the following question:
\begin{center}
{\em Given observations $(X_{i/n})_{i\in[0:n]}$, what is the optimal estimator 
of the random variable $Q$ and its asymptotic properties as $n\to\infty$?}
\end{center}   
Let us stress that we are interested in $Q$ for a particular realization of $(X_t)_{t \in [0,1]}$, which is observed over a dense grid, and not just in its law.

Of course, the formulated problem is hard to address in full generality. But even for particular model classes the assessment of optimality is far from trivial, which is mainly due to the randomness of $Q$. Indeed, the classical methods such as minimax theory, Le Cam theory or Cram\'er-Rao bounds, do not apply in this setting. There are only a few results in the literature that discuss optimality in high frequency statistics. In \cite{CDG13} the authors apply the infinite dimensional version of local asymptotic mixed normality to obtain lower efficiency bounds for estimation of integrated functionals of volatility in the setting of diffusion models with a particular structure.  In particular, their result shows that the standard estimator of the quadratic variation, the realised volatility, is indeed asymptotically efficient for the considered class of models. In a later paper \cite{CDG14} similar lower bounds have been obtained in the framework of certain jump diffusions. The paper \cite{NO11} discusses estimation of the occupation time measure for continuous diffusion models and the authors prove that $n^{3/4}$ is the optimal rate of convergence (however, they do not discuss efficiency bounds). The
articles \cite{A19a,A19b,AC18} investigate estimation of integral functionals $Q=\int_0^1 f(X_s) ds$ for various Markovian and 
non-Markovian models. The main focus here is on deriving error bounds and weak limit theorems for Riemann sum type estimators, which heavily depend on the smoothness of $f$. In several settings they also prove rate optimality in the case of Brownian motion.  

The aim of our paper is to study optimal estimation of extrema, local time and occupation time measure of certain \levy
processes. Accurate estimation of these random functionals is important for numerous applications. % applications including statistics and numerical analysis. 
For instance, supremum is a key quantity in insurance, queueing, financial mathematics, optimal stopping and various applied domains such as environmental science where maximal level of pollution is often of interest. It is noted that our theory can also be used in Monte Carlo simulation of extrema via discretization, but this is not our main focus since much better algorithms exist~\cite{cazares2018geometrically}; see also~\cite{mijatovic_suprema_stable} for exact simulation of the supremum of a stable process. These algorithms, however, can not handle, e.g., the diameter of the range of $X$, whereas our estimators still apply.
Accurate estimation of local times is required in a number of statistical methods including estimation of the volatility coefficient in a diffusion model \cite{FZ93}, estimation of the skewed Brownian motion \cite{L18} and estimation of the reflected fractional Brownian motion \cite{HL13}, just to name a few.  

The estimation of the aforementioned random quantities has been studied in several papers. The standard estimator of the 
supremum of a stochastic process is given by the maximum of its high frequency observations. In the setting 
of a linear Brownian motion the corresponding non-central limit theorem has been proven in \cite{asmussen_glynn_pitman1995}; their result has been later extended in \cite{ivanovs_zooming} to the class of \levy processes satisfying certain regularity assumption. Statistical inference for local times has been investigated in \cite{B86,Jacod98}, who showed asymptotic mixed normality for kernel type estimators in the framework of continuous SDEs. Finally,  \cite{AC18,NO11} discussed the estimation of the occupation time measure 
via Riemann sums. 

In this paper we show that the standard estimators proposed in the literature are indeed rate optimal, but they are not asymptotically efficient.
Instead of certain intuitive constructions, we consider the conditional mean and conditional median estimators, which turn out to be manageable in some important cases.
It is well known that the conditional mean $\e[Q|(X_{i/n})_{i\in[0:n]}]$ is the optimal $L^2$-predictor when $\e [Q^2]<\infty$.
In many cases considered below, however, the random variable $Q$ will not have a finite second moment. Then we use the conditional median estimator $\med[Q|(X_{i/n})_{i\in[0:n]}]$, which is optimal in $L^1$ sense given that $\e [|Q|]<\infty$.
Additionally, we still do consider the conditional mean which is a very natural estimator even when the second moment is infinite.
Importantly, it is optimal with respect to the Bregman distance: $D(x,y)=\phi(x)-\phi(y)-\phi'(y)(x-y)$ with $\phi$ being a  strictly convex differentiable function~\cite{banerjee}.
It is only required here that $\e [|Q|]$ and $\e [|\phi(Q)|]$ are finite. 
We often have $Q\geq 0$ and $\e [Q^p]<\infty$ for some $p>1$, and hence we may take $\phi(x)=x^p$ to produce an optimality statement for the conditional mean estimator. Finally, the conditional median is optimal with respect to
$D(x,y)=(\ind{x\geq y}-1/2)(g(x)-g(y))$ for an increasing function $g$ which in our case can be taken as $g(x)=x^p$ for $p>0$, see~\cite{gneiting} and references therein.

%Instead we consider the conditional mean and median estimators which are optimal in $L^2$ and $L^1$ sense, respectively, see also~\S\ref{sec:optimality} for further comments.
In the case of supremum, the conditional mean and median estimators have a rather explicit and simple form, but their performance assessment is not a trivial task. Importantly, self-similarity of $X$ (up to measure change) is the key property when evaluating such estimators and establishing the corresponding weak limit theory. 
Thus we consider the following two classes of processes: (i) linear Brownian motions and (ii) non-monotone self-similar L\'evy processes.
In the case of local/occupation time we only work with the class (i) of linear Brownian motions and focus on the conditional mean estimators exclusively, which is dictated by the structure of the problem and the tools currently available. Importantly, our conditional mean estimator of the local time fits the framework of~\cite{Jacod98} and yields an asymptotically optimal statistic in some large class in the case of continuous SDEs, see Remark~\ref{rem:localSDE}.  
We find that our new optimal estimators are considerably more efficient than the standard ones and that they do have narrower confidence intervals. In the case of supremum, this is illustrated by a numerical study. Furthermore, we discuss several modifications of our statistics including pre-estimation of unknown parameters of the underlying model. % which is either (i) or~(ii).

\bigskip
This paper is structured as follows. \S\ref{sec:sup_main} is devoted to the supremum and its conditional mean and median estimators with the corresponding weak limit theory in the case of a self-similar \levy process with a known law. Here we also treat the case of a linear Brownian motion, and comment on the conditional mean estimator of the range diameter.
In \S\ref{sec:BM_main} we present the conditional mean estimators of the local time and occupation time together with the asymptotic theory in the case of a linear Brownian motion.
Then in \S\ref{sec3} we study modified statistics based on pre-estimation of the unknown parameters of the model. In particular, we show that reasonable pre-estimation of the model parameters does not affect the asymptotic theory. Furthermore, the effect of truncation of the potentially infinite product involved in the construction of the supremum estimators is discussed, and some comments concerning a general \levy process are given.
Numerical illustrations for the case of supremum are presented in \S\ref{sec:numerics}, where both a linear Brownian motion and a one-sided stable processes are considered. The proofs are collected in Appendix~\ref{sec:proofs_sup} and Appendix~\ref{sec:proofs_loc} for the supremum and local/occupation time, respectively.
The former also requires some additional theory for \levy processes conditioned to stay positive which is given in Appendix~\ref{sec:cond_pos}. 

%will rather use the following statement: 
%\begin{align*}
%\med[Q|(X_{i/n})_{0\leq i \leq n}] \text{ is the } L^1 \text{-optimal predictor of } Q \text{ when } Q\in L^1(\p). 
%\end{align*}
%(Here med stands for median). 

%The following statement is a well known fact in the literature:
%\begin{align*}
%\e[Q|(X_{i/n})_{0\leq i \leq n}] \text{ is the } L^2 \text{-optimal predictor of } Q \text{ when } Q\in L^2(\p). 
%\end{align*}
%In other words, when $Q\in L^2(\p)$, $\e[Q|(X_{i/n})_{0\leq i \leq n}]$ is the minimiser of the expectation $\e[(Q-Y)^2]$, where the minimisation is performed over all $\sigma(X_{i/n}:~i=0,\ldots,n)$-measurable random variables $Y$. 

\section{Optimal estimation of supremum for a self-similar \levy process} \label{sec:sup_main}

In this section we assume that $(X_t)_{t \geq 0}$ is a non-monotone $1/\a$-self-similar L\'evy process, i.e.
\[
(X_{ut})_{t\geq 0}\stackrel{d}{=}u^{1/\a}(X_t)_{t\geq 0}\qquad \text{for all }u>0,
\]
where necessarily $\a\in(0,2]$. Assuming that the law of $X$ (or its parameters) is known, we focus on optimal estimation of the supremum and infimum of $X$ on the interval $[0,1]$ from high-frequency observations. 
%In this section we consider a \levy process $(X_t)_{t\geq 0}$ defined on a filtered probability space $\offp$. Moreover, we assume that the law of $X$ under $\p$, is known. 
The case $\a\in(0,2)$ corresponds to a strictly $\a$-stable process, whereas for $\a=2$ we have a scaled Brownian motion, and the respective simplified expressions for the statistics and their limits can be found in~\S\ref{sec:BM}.
In fact, \S\ref{sec:BM} considers a more general setting of a linear Brownian motion, which is not self-similar but becomes such under Girsanov change of measure.
Some further results concerning estimation of infimum and the range diameter are given in~\S\ref{sec:joint}.
%Otherwise, we have parameterized by two parameters.

We introduce
the notation 
\[
\overline X_t := \sup_{s\leq t} X_s \qquad \text{and} \qquad \underline X_t := \inf_{s\leq t} X_s
\]
to denote the running supremum and infimum process, respectively. 
Furthermore, the time of supremum will often be needed, and thus we define
\[\tau_t:=\inf\{s\in(0,t]:X_{s-}\vee X_s=\overline X_t\}.\]
In fact, the process $X$ as considered here does not jump at its supremum time almost surely and thus we could have used the term maximum instead.
The standard distribution free estimator of $\overline X_1$
is given by the empirical maximum of the observed data:
\begin{align} \label{Mm}
M_n:=\max_{i\in[0:n]} X_{i/n}. 
\end{align}
We remark, however, that $M_n$ is always downward biased. 
Finally, estimation of the infimum amounts to estimation of the supremum of $-X$, and thus no additional theory is needed. The joint estimation of supremum and infimum is discussed in~\S\ref{sec:joint}.

In the following we will often use the notion of {\em stable convergence}. We recall that a sequence of random variables $(Y_n)_{n\in \N}$ 
defined on $(\Omega, \mathcal F, \mathbb P)$ is said 
to converge stably  with limit $Y$ ($Y_n \stab Y$) defined on an extension
$(\overline \Omega, \overline{\mathcal F}, 
\overline{\mathbb P})$ of the original probability space $(\Omega, \mathcal F, \mathbb P)$, 
iff for any bounded, continuous function $g$ and any bounded $\mathcal{F}$-measurable random variable $Z$ it holds that
\begin{equation} \label{defstable}
\e[ g(Y_n) Z] \rightarrow \overline{\e}[ g(Y) Z], \quad \text{as } n \rightarrow \infty.
\end{equation}  
The notion of stable convergence is due to  Renyi \cite{REN}. We also refer to \cite{AE} for properties
of this mode of convergence.

\subsection{Preliminaries}
We will now review the asymptotic theory for the estimator $M_n$, which will be useful for studying conditional mean and median estimators. 
In order to state the limit theorem for $M_n$, we need to introduce an auxiliary process $(\xi_t)_{t\in\R}$.
It is defined as the following weak limit:
\begin{equation}\label{eq:xi}(\overline X_T-X_{\tau_T+t})_{t\in\R}\convd (\xi_t)_{t\in\R}\qquad\text{ as }T\to\infty,\end{equation}
see~\cite{bertoin_splitting}. Here and in the following it is tacitly assumed that the left hand side is $\infty$ when $\tau_T+t\notin[0,T]$.
The functional convergence is always with respect to the Skorokhod $J_1$ topology, unless specified otherwise.
It may be useful to think of $\xi$ as the process $X$ seen from its supremum as the time horizon tends to infinity.

It is well known that $(\xi_t)_{t\geq 0}$ and $(\xi_{(-t)-})_{t\geq 0}$ are independent finite Feller processes starting at $0$. Various representations of these processes exist and a number of important properties have been established, see e.g. \cite{chaumont} and references therein. The latter process when started at a positive level is often referred to as $X$ {\em conditioned to stay positive} (the negative of the former is $X$ {\em conditioned to stay negative}); here conditioning is understood in a certain limiting sense. The law of the limiting process $\xi$ is not explicit except when $X$ is a Brownian motion and then both parts of $\xi$ are $3$-dimensional Bessel processes scaled by $\sigma$, the standard deviation of $X_1$.
In all cases $\xi$ inherits self-similarity from $X$, and hence both parts (when started from positive values) are positive self-similar Markov processes admitting Lamperti representation studied in detail in~\cite{caballero2006conditioned}.

Due to self-similarity of the process $X$ it holds that
\begin{equation}\label{eq:zooming}
\xi^\n_t:=n^{1/\a}\left(\overline X_1- X_{\tau_1 + \frac{t}{n}} \right)_{t\in \mathbb{R}} \convd \left(\xi_t \right)_{t\in \mathbb{R}}\qquad \text{ as }n\to\infty,
\end{equation}
where again $\xi_t^\n=\infty$ when $\tau_1 + \frac{t}{n}\notin[0,1]$.
In other words, the process $\xi$ arises from zooming-in on $X$ at its supremum point.
We refer the reader to~\cite{asmussen_glynn_pitman1995,ivanovs_zooming} for the case of a linear Brownian motion and a general L\'evy process, respectively.

The following result is an instructive application of the convergence in~\eqref{eq:zooming}. It is a particular case of~\cite[Thm.\ 5]{ivanovs_zooming} extending the result of~\cite{asmussen_glynn_pitman1995} for Brownian motion.
\begin{theorem} \label{Mntheo} %\cite[Theorem 5]{ivanovs_zooming}
For a non-monotone $1/\a$-self-similar L\'evy process $X$ we obtain  the stable convergence as $n\to\infty$:
\begin{equation} \label{Maxconv}
V^\n:=n^{1/\a} (\overline X_1-M_n) \stab V:=\min_{j\in \mathbb{Z}} \xi_{j+U}
\end{equation}
where $\xi$ and the standard uniform $U$ are mutually independent, and independent of~$\F$.   
\end{theorem}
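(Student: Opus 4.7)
The plan is to identify the two grid points straddling the time of supremum $\tau_1$ as the locations where $M_n$ is approximately attained, and then to combine the zooming-in convergence~\eqref{eq:zooming} with the asymptotic uniformity of the fractional part of $n\tau_1$. Set $K_n := \lf n\tau_1\rf$ and $R_n := n\tau_1 - K_n \in [0,1)$; the grid point $(K_n+j)/n$ lies at signed distance $(j-R_n)/n$ from $\tau_1$, so
\[
V^\n \;=\; \min_{i\in[0:n]} n^{1/\a}(\overline X_1 - X_{i/n}) \;=\; \min_{j\in \mathbb Z} \xi^\n_{j-R_n},
\]
with the convention that $\xi^\n_t=+\infty$ whenever the corresponding grid point falls outside $[0,1]$. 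Convergence of $V^\n$ is thus reduced to the joint convergence of $(\xi^\n, R_n)$ together with a continuous-mapping argument for the $\min$.

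First I would upgrade~\eqref{eq:zooming} to the joint stable convergence $(\xi^\n, R_n) \stab (\xi, U)$, where $U$ is uniform on $[0,1]$ and the pair $(\xi,U)$ is independent of $\F$. The marginal stable convergence $\xi^\n \stab \xi$ is essentially~\cite{ivanovs_zooming}. For the companion statement one uses that the time of supremum $\tau_1$ possesses a continuous density on $(0,1)$ (owing to non-monotonicity and self-similarity of $X$), so that a Weyl/Riemann--Lebesgue argument applied to this density yields
\[
\e\bigl[g(R_n) Z\bigr] \;\longrightarrow\; \e[Z]\int_0^1 g(u)\,du
\]
for any bounded continuous $g$ and bounded $\F$-measurable $Z$. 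Moreover $\xi^\n$ depends on $X$ only through its behaviour in a shrinking window of radius $O(1/n)$ around $\tau_1$, while $R_n$ is a mesoscopic quantity, and so the two become asymptotically independent; the joint stable statement follows.

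Next I would push the $\min$ through the limit by truncation. Since $\xi_t\to+\infty$ as $|t|\to\infty$ almost surely (both halves of $\xi$ are transient, being $X$ conditioned to stay positive/negative), $\min_{j\in\mathbb Z}\xi_{j-U}$ is realized in a finite window $|j|\le N$ with probability tending to $1$ as $N\to\infty$. A matching uniform-in-$n$ tail estimate for $\xi^\n_{j-R_n}$ is available from the Skorokhod tightness underlying~\eqref{eq:xi}. For each fixed $N$, continuous mapping yields $\min_{|j|\le N}\xi^\n_{j-R_n} \stab \min_{|j|\le N}\xi_{j-U}$, and sandwiching in $N$ gives the stable convergence of $V^\n$ to $\min_{j\in\mathbb Z}\xi_{j-U}$. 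Finally, $1-U$ is again uniform and independent of $\xi$, and reindexing via $j\mapsto j-1$ converts $\min_{j\in\mathbb Z}\xi_{j-U}$ into $\min_{j\in\mathbb Z}\xi_{j+(1-U)}$, which has the same law as $\min_{j\in\mathbb Z}\xi_{j+U}$ in~\eqref{Maxconv}.

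The main obstacle is the uniform tail estimate needed to truncate the infinite minimum: one must show that, with high probability uniformly in $n$, the minimum of $\xi^\n_{j-R_n}$ over $|j|\ge N$ exceeds any prescribed level. This amounts to quantitative transience for $X$ conditioned to stay positive/negative, and is precisely where the theory of conditioned \levy processes is essential; once this input is in hand, everything else is a standard combination of stable convergence, the continuous mapping theorem, and the self-similarity-based zoom~\eqref{eq:zooming}.
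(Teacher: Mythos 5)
The paper does not actually prove this theorem: it is cited as a special case of \cite[Thm.\ 5]{ivanovs_zooming}, and all the paper offers is the same heuristic you develop --- the grid decomposition \eqref{Mndec}, the density of $\tau_1$ from \cite{C13} giving $\{n\tau_1\}\stab U$, and the zooming-in limit \eqref{eq:zooming} --- together with the explicit caveat that this ``only provides intuition and the proof is far from being complete,'' with \cite{ivanovs_firstpassage} supplying ``the necessary corrections.'' Your outline reproduces this plan and correctly isolates the two genuine obstacles: (a) upgrading \eqref{eq:zooming} to joint stable convergence of $(\xi^\n,R_n)$ with the limit pair independent of~$\mathcal F$, and (b) a uniform-in-$n$ tail bound allowing truncation of the infinite minimum. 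Both, however, are asserted rather than proved. Point (a) is the subtle one: your justification --- that $\xi^\n$ lives on a shrinking window while $R_n$ is ``mesoscopic'' --- does not by itself give asymptotic independence, because that window is centred at the random time $\tau_1$, which is not a stopping time, depends on the whole path, and is exactly the quantity defining $R_n=\{n\tau_1\}$; making this localisation rigorous is precisely what \cite{ivanovs_zooming,ivanovs_firstpassage} do. For (b), a suitable quantitative transience statement for $X$ conditioned to stay positive/negative is indeed what is needed (the paper develops such bounds in Appendix~\ref{sec:cond_pos} for the harder Theorem~\ref{thm:max}), but you do not supply it. In short, your proposal is a sound sketch that mirrors the intended argument and names the right difficulties, but it leaves open exactly the gaps the paper itself defers to the references.
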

Let us mention the underlying intuition, which will be important to understand our main result in Theorem~\ref{thm:max} given below. 
Note the identity
\begin{align} \label{Mndec}
n^{1/\a} (\overline X_1-M_n) = \min_{j\in \mathbb Z} \xi^\n_{j+\{n\tau_1\}}
\end{align}
where $\{x\}$ stands for the fractional part of $x$. The random time $\tau_1$ has a density~\cite{C13} and thus according to~\cite{JP12,K37}
\[\{n\tau_1\} \stab U,\]  which together with~\eqref{eq:zooming} hint at~\eqref{Maxconv}.
It is noted that the convergence in~\eqref{eq:zooming} is, in fact, stable with $\xi$ being independent of~$\mathcal F$. Intuitively, zooming-in at the supremum makes the values of $X$ at some fixed times irrelevant.
We stress that this only provides intuition and the proof is far from being complete, see~\cite{ivanovs_zooming} and also~\cite{ivanovs_firstpassage} providing the necessary corrections.

\subsection{Optimal estimators}\label{sec:opt_estimators_sup}
Let us proceed to construct our optimal estimators given by the conditional mean and median. For this purpose we introduce the conditional 
distribution of $\overline X_1$ given the terminal value $X_1$ via
\begin{align} \label{Fxy}
F(x,y):=\p(\overline X_1 \leq x| X_1=y).  
\end{align} 
We choose a version continuous in $y$ which is, in fact, jointly continuous in $(x,y)$ as will be shown in Lemma~\ref{lem:F} below.
By self-similarity we also have
\[F_{1/n}(x,y):=\p(\overline X_{1/n} \leq x| X_{1/n}=y)=F(n^{1/\a}x, n^{1/\a}y).\]
Next, consider the conditional distribution of $\overline X_1-M_n$ given the observations:
\begin{align*} %\label{Hn}
H_n(x)&:=\p\left(\overline X_1-M_n\leq x| X_{j/n},\, j\in[1:n]\right)\\
&=\prod_{j=0}^{n-1}F_{1/n}\left(x+M_n-X_{j/n},X_{\frac{j+1}{n}}-X_{\frac{j}{n}})\right)\\
&=\prod_{j=0}^{n-1}F\left(n^{1/\a}(x+\Delta^n_j),n^{1/\a}(\Delta_{j}^n-\Delta_{j+1}^n)\right)\qquad \text{ for all }x\geq 0,
\end{align*}
where $\Delta_j^n:=M_n-X_{j/n}$ and the second line follows from the stationarity and independence of increments.
We note that $H_n(x)$ is continuous and strictly increasing in $x\geq 0$. % with $H_n(0)=0$ and $H_n(\infty)=1$.
Finally, we introduce the conditional mean and conditional median estimators of $\overline X_1$:
\begin{align} \label{eq:Tmean_def}
&\overline T_n^\mean:=\e[\overline X_1| X_{j/n},\, j\in[1:n]]=M_n+\int_0^\infty (1-H_n(x))\D x,\\
&\overline T_n^\med:= \med[\overline X_1| X_{j/n},\, j\in[1:n]]=M_n+H^{-1}_n(1/2),\label{eq:Tmed_def}
\end{align}
where in the first line we use the integrated tail formula.
Interestingly, $\overline T_n^\mean<\infty$ even when $\e \overline X_1=\infty$, see Remark~\ref{rem:finiteness}.
When evaluating our statistics defined in~\eqref{eq:Tmean_def} and \eqref{eq:Tmed_def} we need access to the function $F(x,y)$.
This function, however, is explicit only in the Brownian case analyzed in~\S\ref{sec:BM} and is semi-explicit in the case of one-sided jumps, see Proposition~\ref{prop:F}.
Thus, in the case of general strictly stable process one needs to assess $F$ numerically, which may necessitate truncation of the product in the definition of $H_n$.
Such modifications are discussed in \S\ref{sec:simplified}.

\subsection{Limit theory}
We start by noting that $H_n \convd \delta_{\overline X_{1}}$ $\p$-almost surely, whereas 
$H_n(xn^{-1/\a})$ has a non-trivial limit.
Observe that $\xi^\n_{j+\{n\tau_1\}}$ 
is the rescaled distance of the $j$th observation following $\tau_1$ from the supremum.
Thus 
\[H_n(xn^{-1/\a})=  \prod_{j\in \mathbb{Z}}F\left(x+\xi^\n_{j+\{n\tau_1\}}-V^\n,\xi^\n_{j+\{n\tau_1\}}-\xi^\n_{j+1+\{n\tau_1\}}\right),\]
where we tacitly assume that the factors with $\xi^\n_\cdot=\infty$ evaluate to~$1$. 
In view of Theorem \ref{Mntheo} it is intuitive that the limit is
\begin{align} \label{defH}
H(x):=  \prod_{j\in \mathbb{Z}}F\left(x+\xi_{j+U}-V,\xi_{j+U}-\xi_{j+1+U}\right),
\end{align}
where the random quantities $U,\xi$ and $V$ are defined in  Theorem \ref{Mntheo}. By substitution we obtain the identities
\begin{align} \label{eq:Tmean_id}
&\overline T_n^\mean=M_n+n^{-1/\a}\int_0^\infty (1-H_n(n^{-1/\a}x))\D x,\\
&\overline T_n^\med =M_n+n^{-1/\a}\ H_n(n^{-1/\a} \cdot)^{-1}(1/2),\label{eq:Tmed_id}
\end{align}
which suggest the asymptotic behaviour of our estimators defined in~\eqref{eq:Tmean_def} and~\eqref{eq:Tmed_def}.
We formalise this in one of our main results:
\begin{theorem} \label{thm:max}
	Assume that $X$ is a non-monotone $1/\a$-self-similar  L\'evy process.
	Then the random function $H$ is continuous and strictly increasing with $H(0)=0$ and $H(\infty)=1$ $\p$-a.s. and 
	\begin{equation}\label{eq:Hconv}
	\left(n^{1/\a}(\overline X_1-M_n),(H_n(xn^{-1/\a}))_{x\geq 0}\right)\stab (V,(H(x))_{x\geq 0})
	\end{equation}
	with respect to the uniform topology, where $V$ and $H(x)$ are defined in~\eqref{Maxconv} and ~\eqref{defH}, respectively.
Furthermore, our estimators satisfy
\begin{align} 
n^{1/\a}(\overline X_1-\overline T_n^\mean)  &\stab V -\int_0^\infty (1-H(x))\D x,\quad \text{ when }\a\in(1,2],\label{eq:T1}\\
n^{1/\a}(\overline X_1-\overline T_n^\med)&\stab V-H^{-1}(1/2),\label{eq:T2}
\end{align}
where the limit random variables are finite. 
\end{theorem}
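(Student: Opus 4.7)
The plan is to bootstrap Theorem~\ref{Mntheo} into a stable convergence statement for the entire rescaled configuration and then apply continuous mapping to the product defining $H_n$.  As a first step I would show the joint stable convergence
\[
\bigl(V^\n,\,\{n\tau_1\},\,(\xi^\n_{j+\{n\tau_1\}})_{j\in\mathbb{Z}}\bigr) \stab \bigl(V,\,U,\,(\xi_{j+U})_{j\in\mathbb{Z}}\bigr),
\]
with the limit independent of $\mathcal F$.  This comes from the stable version of the zooming-in convergence~\eqref{eq:zooming}, together with the density of $\tau_1$ which yields $\{n\tau_1\}\stab U$ independently of the sampled $\xi$.  The $\mathbb{Z}$-indexed family is to be viewed as a process in $\ell^\infty$, so that the infinite product appearing in $H_n$ is a continuous functional of the input.

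Next I would control the tail of the infinite product.  Since $\xi$ restricted to $[0,\infty)$ and its time-reversal are $X$ conditioned to stay positive and negative respectively, both drift to $+\infty$ almost surely; coupled with self-similarity and Lamperti representation (as recalled in the preliminaries) this gives $\xi_{j+U}\to\infty$ at a quantitative rate.  Combined with the joint continuity of $F$ from Lemma~\ref{lem:F} and the easy bound that $F(x,y)\to 1$ as $x\to\infty$ uniformly on compact $y$, this shows both that the product $H(x)$ in~\eqref{defH} converges absolutely, and that only factors in a (random) finite window matter asymptotically.  Together with the joint convergence above, continuous mapping then delivers pointwise stable convergence $H_n(xn^{-1/\a})\stab H(x)$ for each $x\geq 0$; strict monotonicity of each $H_n$ and the boundary identities $H(0)=0$, $H(\infty)=1$ (the former because the factor indexed by the argmin $j^*$ of $\xi_{\cdot+U}$ forces $F(0,\cdot)=0$ under mild regularity of $X$, the latter by dominated convergence in the product) upgrade this to uniform convergence in $x$ via a P\'olya/Dini style monotonicity argument.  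This establishes~\eqref{eq:Hconv}; continuity and strict monotonicity of $H$ follow from the corresponding properties of each factor and the established uniform convergence.

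For~\eqref{eq:T2} I would then apply the continuous mapping theorem to the inverse functional $G\mapsto G^{-1}(1/2)$, which is continuous at strictly increasing continuous limits; combined with identity~\eqref{eq:Tmed_id} and the joint stable convergence in~\eqref{eq:Hconv} this yields the median statement.  For~\eqref{eq:T1}, identity~\eqref{eq:Tmean_id} reduces matters to showing
\[
\int_0^\infty \bigl(1-H_n(xn^{-1/\a})\bigr)\D x \stab \int_0^\infty \bigl(1-H(x)\bigr)\D x,
\]
which by the established uniform convergence on compacts is a uniform integrability question at $x\to\infty$.  The tail decay $1-H(x)=O(x^{-\a})$, heuristically coming from the fact that a single factor in the product already satisfies $1-F(x,y)\asymp x^{-\a}$ via the tail of $\overline X_1$ for a strictly $\a$-stable process, yields finiteness of the limiting integral precisely when $\a>1$, and a uniform-in-$n$ version of the same estimate for $H_n$ provides the required domination.

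The main obstacle, as I see it, is this last uniform integrability together with the boundary behavior of $H$.  The interior analysis of the product is a clean continuous-mapping exercise once Step~1 is in place, but proving $H(0)=0$ a.s.\ and deriving quantitative tail bounds on $1-H(x)$ that are uniform in $n$ requires combining the rather delicate tail asymptotics of $F(x,y)$ (available semi-explicitly in the spectrally one-sided case and through the Brownian formulae, but only asymptotically in general stable case) with the growth of $\xi_{j+U}$ in $j$.  Everything else is either a direct application of the joint convergence in Step~1 or a standard continuous-mapping upgrade.
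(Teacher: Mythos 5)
Your overall architecture matches the paper's: establish joint convergence of finite windows of the rescaled configuration, pass the product through continuous mapping for each fixed truncation, then show the truncation is uniformly (in $n$) negligible, finishing with an $f\mapsto f^{-1}(1/2)$ continuous-mapping step for the median. That much is sound, and you are also right that the monotonicity/P\'olya argument upgrades pointwise convergence of the truncated products to uniform convergence, and that $H(0)=0$ comes from the argmin factor being $F(0,\cdot)=0$.

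The gap is in the step you call the ``main obstacle.'' You need uniform-in-$n$ tail control of the \emph{prelimit} configuration $(u^{(n)}_i)_{i}$ (i.e.\ the rescaled gaps $n^{1/\a}(\overline X_1 - X_{(i+I^{(n)})/n})$), but your discussion only invokes the almost-sure growth of the \emph{limit} process $\xi_{j+U}$ as $|j|\to\infty$. Knowing the limit drifts to infinity does not bound the triangular array uniformly in $n$, and the $\ell^\infty$/continuous-mapping framing you sketch cannot bypass this because the infinite product is not a continuous functional of finite-dimensional projections without exactly this tail control. What closes the argument in the paper is a coupling (Lemma~\ref{lem:bertoin}) built from Bertoin's splitting-at-the-infimum theorem \emph{together with self-similarity}: the configurations $(u^{(n)}_i)_i$ for all $n$ are realized simultaneously as restrictions of one process $\tilde\xi$ with the law of $\xi$, so that a single almost-sure growth statement about $\tilde\xi$ (Lemma~\ref{lem:LIL}) controls all $n$ at once. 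On top of that one still needs quantitative moment/probability bounds on the local fluctuations of the conditioned-to-stay-positive process (Lemma~\ref{eq:Dbound}, which rests on the estimates in Appendix~\ref{sec:cond_pos}), because the factors of the product depend on both $u^{(n)}_i$ and the increment $u^{(n)}_i-u^{(n)}_{i+1}$, and for $\beta\in(-1,1)$ the decay of $1-F(x,y)$ in $x$ is only polynomial and degrades with $|y|$. Your heuristic $1-H(x)=\mathrm{O}(x^{-\a})$ from a single factor is optimistic: for a sum over infinitely many factors one needs both the factor-$i$ decay in $x$ and the $i$-dependent growth of $\xi_{i+U}$, and their interaction is delicate (hence the $\delta_j$-ladder construction in Lemma~\ref{eq:Dbound}). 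Without the coupling and these fluctuation estimates the proposal does not establish either~\eqref{eq:Hconv} in uniform topology or the uniform integrability needed for~\eqref{eq:T1}.
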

It is noted that the proof of this result is far from trivial, since it requires precise understanding of the tail function $1-F(x,y)$ for large $x$
and the rate of growth of $\xi^\n_t$ as $t\to\infty$ (uniformly in $n$) among other things.  
%With exception of the Brownian case treated in~\ref{sec:BM_main} the resulting limiting expressions are non-trivial to simulate.
The identities \eqref{eq:Tmean_id} and \eqref{eq:Tmed_id} show that the statistics $\overline T_n^\mean$ and $\overline T_n^\med$ are first order 
equivalent to the standard estimator 
$M_n$, and the knowledge of the distribution of $X$ only enters through the $n^{-1/\a}$-order term.  This fact will prove to be important in Section \ref{sec3}, where the parameters of the law of $X$ will need to be estimated.  

Recall that $\e \overline X_1^p<\infty$ for $p\in(0,\a)$. Moreover, all moments of $\overline X_1$ are finite when $X$ is a Brownian motion or a strictly $\a$-stable process with no positive jumps. 
In the latter cases the conditional mean estimator is optimal in $L^2$ sense. In the case $\a\in(1,2]$ the conditional median is optimal in
$L^1$ sense and the conditional mean is optimal with respect to the above mentioned Bregman distance $D(x,y)=x^p-y^p-py^{p-1}(x-y)$, where $p\in(1,\alpha)$. 
Finally, the conditional median is optimal with respect to the loss function $D(x,y)=(\ind{x\geq y}-1/2)(x^p-y^p)$ for $p\in(0,\alpha)$ and any~$\alpha$.

Interestingly, all the expressions in Theorem~\ref{thm:max} stay the same if the process $X$ is replaced by its negative $-X$, see Proposition~\ref{prop:symmetry}. In particular, in the spectrally-positive case the difference $\overline X_1-\overline T_n^\mean$ has moments of all orders even though each term has infinite second moment, see also Remark~\ref{rem:finiteness} below.

%We recall that $\overline T_n^\mean$ is an $L^2$-optimal estimator for $\overline X_1$ if $\a=2$, while $\overline T_n^\med$ is an $L^1$-optimal estimator for $\overline X_1$ when $\a \in (1,2]$. 
%To understand the basic ideas behind the main results of Theorem \ref{thm:max}, we observe the identity
%\begin{align*}
%\overline T_n^\mean&=M_n+\int_0^\infty (1-H_n(x))\D x \\
%& = M_n+n^{-1/\a}\int_0^\infty (1-H_n(yn^{-1/\a}))\D y,
%\end{align*}
%and similarly 
%\begin{align*}
%\overline T_n^\med=M_n+H^{-1}_n(1/2) = M_n+n^{-1/\a}H_n(n^{-1/\a} \cdot)^{-1}(1/2).
%\end{align*}
%Taking into account the stable convergence at \eqref{eq:Hconv}, the statements of \eqref{eq:T1} and \eqref{eq:T2} appear to be a natural consequence.  However, 

\subsection{Linear Brownian motion}\label{sec:BM}
Consider a linear Brownian motion $X$ with drift parameter $\mu\in\R$ and scale parameter $\sigma>0$, which is
self-similar (and hence Theorem~\ref{thm:max} applies) only when $\mu=0$.
Nevertheless, $X$ can be obtained from a scaled Brownian motion by Girsanov change of measure and, in particular, the conditional distribution $\p(\overline X_{1/n}\leq x|X_{1/n}=y)$ does not depend on~$\mu$, see \S\ref{secA.4.1}. Hence our estimators have exactly the same form as in the case of $\mu=0$, see~\S\ref{sec:opt_estimators_sup}. Furthermore, the conditional distribution function $F$ is explicit in this case and is given by
\begin{equation}\label{eq:F_BM}
F(x,y)=1-\exp\left(-2x(x-y)/\sigma^2\right)\qquad \text{for }x>y_+,
\end{equation}
which follows from~\cite{shepp} or earlier sources, see also~\cite[1.1.8]{borodin_salminen}. 
Thus
\begin{equation}\label{eq:H_BM}
H_n(x)=\prod_{i=0}^{n-1}\left(1-\exp(-2(x+\Delta_i)(x+\Delta_{i+1})n/\sigma^2)\right)
\end{equation}
and the estimators are then defined by~\eqref{eq:Tmean_def} and~\eqref{eq:Tmed_def}. 

Interestingly, also the limit theorem has exactly the same form. The main reason for this is that the limit in~\eqref{eq:zooming} does not depend on $\mu$ either, see~\cite{asmussen_glynn_pitman1995}.
In the following result we prefer to choose the scaling $\sqrt n/\sigma$ rather than $\sqrt n$ so that the respective quantities correspond to the standard Brownian motion. % and, in particular, $\xi$ is the two-sided 3-dimensional Bessel process.
\begin{corollary}\label{cor:BM}
For a linear Brownian motion $X$ with drift parameter $\mu$ and scale $\sigma>0$ we have
\begin{align}
\frac{\sqrt n}{\sigma}(\overline X_1-\overline T^\mean_n)  &\stab V -\int_0^\infty (1-H(x))\D x,\label{eq:BM1}\\
\frac{\sqrt n}{\sigma} (\overline X_1-\overline T^\med_n)&\stab V-H^{-1}(1/2),\label{eq:BM2}
\end{align}
where $V=\min_{j\in \mathbb{Z}} \xi_{j+U}$ and 
\[H(x)=\prod_{j\in \mathbb{Z}}\left(1-\exp\left(-2(x+ \xi_{j+U}-V )(x+ \xi_{j+1+U}-V) \right)\right)\]
with $\xi$ being the two-sided 3-dimensional Bessel process and $U$ a standard uniform, which are mutually independent and independent of~$\mathcal F$.
\end{corollary}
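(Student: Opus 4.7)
The approach is to reduce Corollary~\ref{cor:BM} to Theorem~\ref{thm:max} by (i) observing that the estimators $\overline T_n^\mean$ and $\overline T_n^\med$ take the exact same form for any drift $\mu$, and (ii) transferring the stable convergence from the drift-free (self-similar) setting back to $\p$ via an equivalent measure change.

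First I would justify that the conditional law $F(x,y) = \p(\overline X_{1/n}\leq x | X_{1/n}=y)$ is independent of $\mu$, as already announced in \S\ref{secA.4.1}: given $X_{1/n}=y$, the path $(X_s)_{s \in [0,1/n]}$ is a Brownian bridge with scale $\sigma$ from $0$ to $y$, and the drift plays no role. Formula~\eqref{eq:F_BM} is the classical running-maximum distribution of such a bridge. Combined with the Markov property of $X$, this gives the product form~\eqref{eq:H_BM} of $H_n$, so $\overline T_n^\mean$ and $\overline T_n^\med$ are the same pathwise functionals of the observations as in the driftless case.

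Next, let $\q$ be the probability measure equivalent to $\p$ on $\F_1$ delivered by Girsanov's theorem, under which $X$ is a drift-free $\sigma$-scaled Brownian motion, hence $1/2$-self-similar. Under $\q$, Theorem~\ref{thm:max} with $\a=2$ applies directly, yielding
\[
\sqrt n\,(\overline X_1 - \overline T_n^\mean) \stab V^\sigma - \int_0^\infty (1 - H^\sigma(x))\,\D x \qquad \text{under }\q,
\]
and the analogous median statement, where $V^\sigma$ and $H^\sigma$ are built from the $\sigma$-scaled two-sided $3$-dim Bessel process $\xi^\sigma$ (the relevant limit when $X$ is Brownian). Writing $\xi^\sigma = \sigma \xi$ with $\xi$ the \emph{standard} two-sided Bessel process, substituting into \eqref{defH} and using~\eqref{eq:F_BM}, a direct computation shows
\[
V^\sigma = \sigma V, \qquad H^\sigma(x) = H(x/\sigma), \qquad \int_0^\infty (1-H^\sigma(x))\,\D x = \sigma \int_0^\infty(1-H(x))\,\D x, \qquad (H^\sigma)^{-1}(1/2) = \sigma H^{-1}(1/2),
\]
with $V$ and $H$ as in the statement of the corollary. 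Dividing the $\q$-stable convergences by $\sigma$ then produces exactly \eqref{eq:BM1} and \eqref{eq:BM2}, but still stably under $\q$.

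Finally, I would transfer the stable convergence from $\q$ to $\p$. This uses the classical fact that stable convergence with respect to $\F$ is invariant under locally equivalent changes of measure (see~\cite{AE}): since $\q\sim\p$ on $\F_1$ and since the auxiliary randomness $(U,\xi)$ defining $V$ and $H$ lives on an extension independent of $\F$, the limits have identical laws under both measures, and the convergences pass from $\q$ to $\p$. The main subtle step in this proposal is precisely this measure-change transfer — the remaining work consists only of the rescaling bookkeeping and the conditional-bridge argument for $F$.
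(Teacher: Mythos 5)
Your proposal is correct, and the key idea (Girsanov's theorem) coincides with the paper's, but you deploy it at a different level. The paper re-examines the proof of Theorem~\ref{thm:max} directly under the drifted measure $\p$: it notes that the convergence~\eqref{eq:zooming_conv} of the zoomed-in sequences still holds for a linear Brownian motion by~\cite{asmussen_glynn_pitman1995}, so the convergence of truncated versions (Lemma~\ref{lem:conv_trunc}) goes through unchanged; the only ingredient that genuinely uses self-similarity is the uniform negligibility of truncation (Lemma~\ref{lem:conv}, via the representation of Lemma~\ref{lem:bertoin}), and the paper transfers \emph{only those two probability estimates}~\eqref{eq:trunc1}--\eqref{eq:trunc2} from the driftless case by a change-of-measure/truncation argument. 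You instead treat Theorem~\ref{thm:max} as a black box: apply it under an equivalent driftless measure $\q$, then pull the entire stable-convergence statement back to $\p$ using the (standard) fact that $\F$-stable convergence is preserved under locally equivalent changes of measure with $\F$-measurable density, together with the observation that the estimators and the limit variables $(U,\xi)$ do not depend on $\mu$. Your route is conceptually simpler and less intrusive, at the cost of invoking the measure-change invariance of stable convergence as an abstract principle rather than verifying a concrete bound; the paper's route is more surgical and makes explicit which single step of the self-similar proof actually breaks. Your $\sigma$-rescaling bookkeeping ($V^\sigma=\sigma V$, $H^\sigma(x)=H(x/\sigma)$, hence $\int_0^\infty(1-H^\sigma)\,\D x=\sigma\int_0^\infty(1-H)\,\D x$ and $(H^\sigma)^{-1}(1/2)=\sigma H^{-1}(1/2)$) checks out and mirrors the paper's initial reduction to $\sigma=1$. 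Both approaches are valid proofs of the corollary.
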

 %here we slightly abuse the notation by assuming that $\xi$ and $H(x)$ correspond to the case of the standard Brownian motion.
% The proof requires an additional change of measure argument, see [TODO]. 

Additionally, we show that~\eqref{eq:BM1} extends to convergence of moments, see Lemma~\ref{lem:UI} below.
In particular, the asymptotic MSE of the optimal $\overline T^\mean_n$ is given by
\[\e [(\overline X_1-\overline T^\mean_n)^2]\sim\frac{\sigma^2}{n}\e\left[\left(V-\int_0^\infty (1-H(x))\D x\right)^2\right].\]

 \begin{lemma}\label{lem:UI}
 	For a linear Brownian motion $X$ and any $p>0$ we have
 	\[\e\left[\left(\frac{\sqrt n}{\sigma}\left(\overline X_1-\overline T_n^\text{\rm mean}\right)\right)^p\right]\to \e\left[\left(V-\int_0^\infty (1-H(x))\D x\right)^p\right]<\infty.\]
 \end{lemma}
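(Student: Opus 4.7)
The plan is to upgrade the stable convergence from Corollary~\ref{cor:BM} to convergence of $p$-th moments by establishing uniform integrability. Writing $Z_n := (\sqrt n/\sigma)(\overline X_1 - \overline T_n^\mean)$ and $Z := V - \int_0^\infty(1-H(x))\D x$, Corollary~\ref{cor:BM} gives $Z_n \stab Z$, so by Vitali's theorem the claim reduces to showing $\sup_n \e[|Z_n|^q]<\infty$ for every $q\geq 1$. The triangle inequality together with the identity $\overline T_n^\mean - M_n = \e[\overline X_1 - M_n \mid \F_n]$, where $\F_n:=\sigma(X_{j/n}: j\in[0:n])$, and conditional Jensen's inequality then reduce this to establishing $\sup_n n^{q/2}\,\e[(\overline X_1 - M_n)^q] < \infty$ for every $q\geq 1$. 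A Girsanov change of measure further reduces to the driftless case $\mu=0$, since the Radon--Nikodym density $\exp(\mu X_1/\sigma^2 - \mu^2/(2\sigma^2))$ lies in every $L^p$, so moment bounds transfer by H\"older.

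For the key bound with $\mu=0$, let $i_\tau := \lfloor n\tau_1\rfloor$ and $s^+ := (i_\tau+1)/n - \tau_1 \in (0, 1/n]$; from $M_n \geq X_{(i_\tau+1)/n}$ we obtain $\overline X_1 - M_n \leq \overline X_1 - X_{\tau_1+s^+}$. Williams' path decomposition at the argmax tells us that, conditional on $(\tau_1, \overline X_1)$, the post-max process $(\overline X_1 - X_{\tau_1+s})_{s\in[0,1-\tau_1]}$ is a $3$-dimensional Bessel process starting from $0$, whose law crucially does \emph{not} depend on $\overline X_1$. Combined with Bessel$(3)$ scaling, this yields --- conditionally on $(\tau_1,\overline X_1)$, and hence unconditionally --- the stochastic domination
\[ \sqrt n\,(\overline X_1 - M_n) \;\leq_{\rm st}\; \sup_{u\in[0,1]} R(u), \]
where $R$ is a standard Bessel$(3)$ process. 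Since $R(u) = \|B(u)\|$ for a $3$-dimensional Brownian motion $B$, the right-hand side has finite moments of every order (e.g.\ by Doob's inequality applied to each coordinate), giving the required uniform estimate $\sup_n n^{q/2}\,\e[(\overline X_1-M_n)^q] < \infty$.

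The crux of the argument --- and where a naive approach would face the main obstacle --- is the choice to bound $\overline X_1 - M_n$ via the \emph{post}-max rather than the pre-max distance: the pre-max Williams representation would instead yield a Bessel$(3)$ \emph{bridge} from $0$ to $\overline X_1$ of length $\tau_1$, whose moments are singular in $\tau_1$ near $\tau_1=0$ and would require delicate integration against the explicit joint density of $(\tau_1,\overline X_1)$. The post-max route sidesteps this issue entirely. Vitali's theorem then yields the lemma, with $\e[|Z|^p]<\infty$ an automatic byproduct of uniform integrability combined with weak convergence.
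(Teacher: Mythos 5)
Your reduction at the start is correct, and in fact cleaner than the paper's own route. Writing $\overline T_n^\mean - M_n = \e[\,\overline X_1 - M_n\mid \F_n\,]$ and applying conditional Jensen shows that both $(\sqrt n/\sigma)(\overline X_1 - M_n)$ and $(\sqrt n/\sigma)\,\e[\,\overline X_1 - M_n\mid\F_n\,]$ have $q$-th moments bounded by $n^{q/2}\sigma^{-q}\e[(\overline X_1-M_n)^q]$, so the whole problem collapses to a moment bound on $V^\n=\sqrt n(\overline X_1 - M_n)$. The paper instead splits off the integral term $\int_0^\infty(1-H_n(xn^{-1/2}))\,\D x$ and spends most of its proof bounding it via the $\tilde\xi$ representation and tail estimates; your observation that this term is a conditional expectation of $V^\n$, hence automatically controlled by Jensen, short-circuits that work. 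The Girsanov step to $\mu=0$ is also fine and matches the paper.

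The gap is in the key bound. The identification of the post-maximum process with a Bessel(3) process is not correct. For a driftless Brownian motion on $[0,1]$, Denisov's decomposition says that, conditionally on $\tau_1=t$, the process $(\overline X_1 - X_{\tau_1+s})_{s\in[0,1-t]}$ is a \emph{Brownian meander} of length $1-t$ (and it is indeed independent of $\overline X_1$, which is a functional of the pre-max meander). A meander is Brownian motion conditioned to stay positive \emph{only on the given finite horizon}, whereas Bessel(3) is conditioned to stay positive forever; they have mutually absolutely continuous but distinct laws (Imhof's relation, with Radon--Nikodym density $\sqrt{\pi(1-t)/2}/R_{1-t}$). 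In particular, the meander is not self-similar in the sense you invoke, so the ``Bessel(3) scaling'' step does not apply directly: $\sqrt n\, m_{s/n}$ for a meander $m$ of length $1-t$ is \emph{not} a standard Bessel(3) run to time $1$, and the claimed stochastic domination $\sqrt n(\overline X_1-M_n)\le_{\rm st}\sup_{u\le 1}R(u)$ does not follow as written. The conflation seems to come from the zooming-in limit~\eqref{eq:zooming}, where the post-max process near the maximum \emph{does} converge to Bessel(3); but for the uniform-in-$n$ estimate one must work with the exact finite-$n$ law, which is the meander.

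Two ways to close the gap. The simplest is to do what the paper does and cite~\cite{asmussen_glynn_pitman1995}, which establishes that $\sup_n\e[\exp(\theta V^\n)]<\infty$ for every $\theta$ in the driftless case; combined with your Jensen reduction this finishes the proof in a few lines. Alternatively, one can repair the Williams route: on the event $\tau_1\le 1/2$, say, Imhof's relation gives $\e[c/R_{1-\tau_1}\mid\mathcal F_r]\le 1/\sqrt{1-r}$, which yields a uniform comparison of the meander restricted to $[0,r]$ with the Bessel(3) for $r$ bounded away from $1$, and the remaining regime is trivially handled by $\sup_{u\le 1}m'_u$ having all moments. But this is substantially more delicate than the one-line scaling argument you wrote, and must be spelled out.
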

%nonsense below?
%\begin{remark}\label{rem:2moment}\rm
%	It is shown in~\cite{asmussen_glynn_pitman1995} that $\e V_n^2\to \e V^2<\infty$, whereas
%$n\e(\overline X_1-\overline T_n^\mean)^2\leq \e V_n^2$ by the optimality of the conditional mean. Hence the limit random variable in~\eqref{eq:BM1} has a finite second moment. It does not imply, however, that the second moments in~\eqref{eq:BM1} converge to the respective quantity.
%\end{remark}

\subsection{Joint estimation of supremum and infimum}\label{sec:joint}
Consider the process $-X_t$ and the associated conditional mean estimator $\underline T^\mean_n$ of its supremum $\sup_{t\in[0,1]}(-X_t)=-\underline X_1$,
which is the negative of the infimum of~$X$.
According to Proposition~\ref{prop:symmetry} there is the symmetry:
\[(-\underline X_1)-\underline T^\mean_n\stackrel{d}{=}\overline X_1-\overline T^\mean_n\]
for all~$n$, and so also the asymptotic theory is the same.
Furthermore, we have the following joint convergence (linear Brownian motion included with $\a=2$ and then the limit corresponds to the case $\mu=0$):
\begin{corollary}\label{cor:joint}
For $\a\in(1,2]$ it holds that
\begin{align*}n^{1/\a}(\overline X_1-\overline T^\mean_n,-\underline X_1-\underline T^\mean_n) \stab (L,L'),\end{align*}
where $L'$ and $L$ are identically distributed, mutually independent, and independent of $\F$. Their common distribution is the limiting law in~\eqref{eq:T1}.
\end{corollary}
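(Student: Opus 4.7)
Write $\tau_1':=\inf\{s\in(0,1]:X_{s-}\wedge X_s=\underline X_1\}$ for the time of the infimum, and let $V_n',H_n'$ be the analogues of $V^\n,H_n$ for $-X$. The strategy is to upgrade the stable convergence of Theorem~\ref{thm:max} (applied to $X$ and to $-X$) to a joint statement in which the limits around the supremum and the infimum are independent, and then to run the continuous-mapping argument of the proof of~\eqref{eq:T1} twice, in parallel.

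First I would establish the basic geometric facts about the two extremum times. For a non-monotone $1/\a$-self-similar L\'evy process with $\a\in(1,2]$, one has $\p(\tau_1=\tau_1')=0$ and both times lie in $(0,1)$ almost surely; moreover the pair $(\tau_1,\tau_1')$ admits a joint density on $(0,1)^2$. The marginal density statements are due to Chaumont, and joint absolute continuity follows by first conditioning on $\F_{\tau_1\wedge\tau_1'}$ and invoking absolute continuity of the extremum time of the post-$\tau_1\wedge\tau_1'$ piece.

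The core step is a joint zooming-in result at the two extrema. Define
\[
\xi^\n_t=n^{1/\a}(\overline X_1-X_{\tau_1+t/n}), \qquad {\xi'}^\n_t=n^{1/\a}(X_{\tau_1'+t/n}-\underline X_1),\quad t\in\R.
\]
I claim that $(\xi^\n,{\xi'}^\n)\stab (\xi,\xi')$ in the $J_1$ topology, with $\xi$ and $\xi'$ two independent copies of the limit in~\eqref{eq:xi} and with the pair independent of~$\F$. On the event $\{|\tau_1-\tau_1'|>\ep\}$, which has probability tending to~$1$ as $\ep\downarrow 0$, the two zooming windows $[\tau_1-T/n,\tau_1+T/n]$ and $[\tau_1'-T/n,\tau_1'+T/n]$ become disjoint for large $n$; conditioning on $\F_{\tau_1\wedge\tau_1'}$ and applying the Bertoin splitting-at-the-maximum theorem~\cite{bertoin_splitting} writes the post-$\tau_1\wedge\tau_1'$ path as an independent concatenation on which the marginal zooming result of~\cite{ivanovs_zooming} can be applied to each window separately. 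Because the marginal limit $\xi$ is already independent of~$\F$, iterating this conditioning-and-splitting argument produces mutually independent local limits. In the same spirit, since $(\tau_1,\tau_1')$ has a joint density, the equidistribution modulo~$1$ argument behind Theorem~\ref{Mntheo} lifts to
\[
(\{n\tau_1\},\{n\tau_1'\})\stab (U,U'),
\]
with $U,U'$ independent standard uniforms, jointly independent of $\F$ and of $(\xi,\xi')$.

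Combining these inputs yields the joint stable convergence of $(V^\n,(H_n(\cdot\,n^{-1/\a}))_{\geq 0},V_n',(H_n'(\cdot\,n^{-1/\a}))_{\geq 0})$ to $(V,H,V',H')$ in the uniform topology, with the two pairs independent and each pair distributed as in Theorem~\ref{thm:max}. Applying the continuous functional
\[
(v,h)\ \longmapsto\ v-\int_0^\infty(1-h(x))\,\D x,
\]
to each coordinate, exactly as in the derivation of~\eqref{eq:T1} (with the same uniform integrability / tail control for $1-F$), delivers the stated stable convergence of the pair $n^{1/\a}(\overline X_1-\overline T^\mean_n,-\underline X_1-\underline T^\mean_n)$ with independent limit components, while Proposition~\ref{prop:symmetry} identifies the common marginal law of $L$ and $L'$. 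The main obstacle is the joint zooming-in: the marginal statement in~\cite{ivanovs_zooming} is already delicate, and lifting it to two simultaneous extrema hinges on showing that, after zooming, the two local pictures are driven by disjoint portions of $X$; this is what the iterated splitting-at-the-extremum decomposition is designed to provide.
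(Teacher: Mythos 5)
Your proposal is correct and takes essentially the same route as the paper: the paper's entire proof of Corollary~\ref{cor:joint} is the observation that the only new ingredient is the joint version of~\eqref{eq:zooming_conv} for $X$ and $-X$ with mutually independent limits, which it obtains by citing~\cite[Lem.~1]{asmussen_ivanovs_twosided} and then running the continuous-mapping and tail-control arguments of Theorem~\ref{thm:max} in both coordinates — exactly your plan, except that you sketch a proof of the joint zooming lemma (via splitting at the extremum and asymptotic disjointness of the two windows) rather than quoting it. One small imprecision in your sketch: $\tau_1\wedge\tau_1'$ is not a stopping time, so "conditioning on $\F_{\tau_1\wedge\tau_1'}$" should instead be phrased in terms of the Bertoin splitting decomposition itself (which is not a strong Markov conditioning but a last-exit/Williams-type identity); the underlying idea — that the two local pictures are asymptotically driven by disjoint stretches of the path — is the right one and is what the cited lemma makes rigorous.
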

This, for example, readily yields the limit result for the conditional mean estimator of the range diameter $\overline X_1-\underline X_1$.

%
%Finally, we discuss the joint stable convergence for the estimators of $\overline X_1$ and $\underline X_1$. Similarly to \eqref{eq:Tmean_def} we define
%\[
%\underline{T}_n^{\mean}= \e[\underline X_1| X_{j/n},\, j\in[1:n]]
%\]
%We obtain the following result.
%\begin{corollary}
%We denote by $\mu_1$ (resp. $\mu_2$) the limiting distribution at \eqref{eq:T1} (resp. \eqref{eq:T2}). We consider random variables 
%$(L_1,L_1^{\prime}) \sim \mu_1 \otimes \mu_1$ and $(L_2,L_2^{\prime}) \sim \mu_2 \otimes \mu_2$, defined on an extension of the probability
%space $\ofp$, that are independent of $\F$. Then it holds that 
%\begin{align*} 
%n^{1/\a}\left(\overline X_1-\overline T_n^\mean, \underline X_1-\underline T_n^\mean  \right)  &\stab (L_1,L_1^{\prime}),\qquad \text{ when }\a\in(1,2],\\
%n^{1/\a}\left(\overline X_1-\overline T_n^\med, \underline X_1-\underline T_n^\med  \right)  &\stab (L_2,L_2^{\prime}). 
%\end{align*}
%\end{corollary}
%This corollary can be applied to the estimation of the maximal span $\overline X_1 - \underline X_1$. 

\section{Optimal estimation of local time and occupation time measure for a linear Brownian motion} \label{sec:BM_main}
In this section $X$ denotes a linear Brownian motion with drift parameter $\mu\in\R$ and scale $\sigma>0$, and $L_t(x)$ denotes the corresponding local time process at the level~$x\in\R$, which is a continuous increasing process given as the almost sure limit:
\begin{equation*}L_t(x):=\lim_{\epsilon\downarrow 0}\frac{1}{2\epsilon}\int_0^t 1_{(x-\epsilon,x+\epsilon)}(X_s)\D s.\end{equation*}
Furthermore, $O_t(x)$ stands for the occupation time in the interval $(x,\infty)$:
\begin{equation}\label{eq:occ_density}
O_t(x):= \int_0^t 1_{(x,\infty)} (X_s) ds = \int_x^\infty L_t(y)\D y \quad\text{a.s.}
\end{equation}
Our aim here is to establish limit theorems for the conditional mean estimators of $L_t(x)$ and $O_t(x)$.

\subsection{Basic formulae}
An important role will be played by the functions
\begin{align*}g(x,z)&:=\e^0[L_1(x)| X_1=z],\\
 G(x,z)&:=\e^0[O_1(x)| X_1=z]=\int_x^\infty g(y,z)\D y,
 \end{align*}
where $\e^0$ corresponds to the law of the standard Brownian motion. 
Both functions $g$ and $G$ have explicit formulae in terms of the density $\varphi$ and survival function $\overline \Phi$ of the standard normal distribution.
Some basic observations and these formulae are collected in the following result.
\begin{lemma}\label{lem:formulae} There are the identities
	\begin{align}
	&\e\left[L_t(x)| X_{t}=z\right] = \frac{\sqrt t}{\sigma} g\left(\frac{x}{\sigma\sqrt t},\frac{z}{\sigma\sqrt t}\right),\\
	&\e\left[O_t(x)| X_{t}=z\right] = t G\left(\frac{x}{\sigma\sqrt t},\frac{z}{\sigma\sqrt t}\right).
	\end{align}
Moreover, the functions $g$ and $G$ are bounded on $\R^2$ and satisfy $g(x,z)=g(-x,-z), G(x,z)=1-G(-x,-z)$.
For $x\geq 0$ we have the formulae
\begin{align*}
z<x: &\qquad g(x,z)=\overline\Phi(2x-z)/\varphi(z),\\
&\qquad G(x,z)=\frac{1}{2}\exp(-2x(x-z))-(2x-z) \frac{\overline\Phi(2x-z)}{2\varphi(z)},\\
z\geq x: &\qquad g(x,z)=\overline\Phi(z)/\varphi(z),\\ &\qquad G(x,z)=\frac{1}{2}+(z-2x) \frac{\overline\Phi(z)}{2\varphi(z)}.
\end{align*}
\end{lemma}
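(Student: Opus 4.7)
The plan has three stages.

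Stage 1 (reduction to standard Brownian motion). Write $X_s=\mu s+\sigma W_s$ for a standard Brownian motion $W$. Conditional on $X_t=z$, the law of $(X_s)_{s\le t}$ is a Brownian bridge whose drift cancels in the bridge density and therefore does not depend on $\mu$; hence for the two conditional expectations I may assume $\mu=0$, so $X_s=\sigma W_s$. The occupation density formula gives $L^X_t(x)=\sigma^{-1}L^W_t(x/\sigma)$, and trivially $O^X_t(x)=O^W_t(x/\sigma)$. Brownian scaling $(W_{ts})_{s\ge 0}\eqd\sqrt t(W_s)_{s\ge 0}$ then yields $L^W_t(y)\eqd\sqrt t\,L^W_1(y/\sqrt t)$ jointly with $W_t\eqd\sqrt t\,W_1$, and similarly $O^W_t(y)\eqd t\,O^W_1(y/\sqrt t)$. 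Combining these three substitutions produces the two scaling identities.

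Stage 2 (explicit formula for $g$). It suffices to work with standard Brownian motion from $0$. For $x\ge 0$ the joint density of $(L_1(x),W_1)$ is the classical formula
\[\p^0(L_1(x)\in\D\ell, W_1\in\D z)=\frac{c+\ell}{\sqrt{2\pi}}\exp\!\left(-\tfrac{(c+\ell)^2}{2}\right)\D\ell\,\D z,\qquad \ell>0,\]
with $c=z$ for $z\ge x$ and $c=2x-z$ for $z\le x$. One route is Borodin--Salminen; if a self-contained derivation is preferred, condition on the first hitting time $T_x$ and use the strong Markov property to reduce to the joint law of $(L_r(0), B_r)$ for a fresh Brownian motion $B$, which in turn follows from L\'evy's identity $(|B_r|, L_r(0))\eqd(\max_{u\le r}B_u-B_r,\max_{u\le r}B_u)$ and the reflection-principle density of $(\max_{u\le r}B_u,B_r)$. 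Substituting $u=c+\ell$ and using $\int_c^\infty u\varphi(u)\D u=\varphi(c)$ and $\int_c^\infty u^2\varphi(u)\D u=c\varphi(c)+\overline\Phi(c)$,
\[\varphi(z)g(x,z)=\int_0^\infty \ell\frac{c+\ell}{\sqrt{2\pi}}e^{-(c+\ell)^2/2}\D\ell=\int_c^\infty(u^2-cu)\varphi(u)\D u=\overline\Phi(c),\]
which is the stated formula for $g$.

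Stage 3 ($G$, symmetries, and boundedness). The relation $G(x,z)=\int_x^\infty g(y,z)\D y$ is \eqref{eq:occ_density} combined with linearity of conditional expectation. For $z\le x$ substitute $w=2y-z$ in $\int_x^\infty \overline\Phi(2y-z)/\varphi(z)\D y$ and use $\int_c^\infty \overline\Phi(w)\D w = \varphi(c)-c\overline\Phi(c)$ together with $\varphi(2x-z)/\varphi(z)=\exp(-2x(x-z))$ to recover the closed form. For $z\ge x$ split the integral at $y=z$ and apply the two branches of $g$ on the respective intervals. Symmetry: $-W$ is also a standard Brownian motion and $L^W_1(x)=L^{-W}_1(-x)$ a.s., while $\{W_1=z\}=\{-W_1=-z\}$, giving $g(x,z)=g(-x,-z)$; similarly $O^W_1(x)+O^{-W}_1(-x)=1$ a.s., giving $G(x,z)+G(-x,-z)=1$. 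Boundedness of $G$ is trivial since $O_1\in[0,1]$. For $g$, by symmetry it suffices to consider $x\ge 0$: for $z\ge x$ Mills' ratio gives $\overline\Phi(z)/\varphi(z)\le \sqrt{2\pi}/2$; for $z\le x$ write $g(x,z)=[\overline\Phi(2x-z)/\varphi(2x-z)]\exp(-2x(x-z))$, where Mills controls the first factor since $2x-z\ge 0$ and the second factor is at most $1$.

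I expect the joint-density input in Stage 2 to be the main obstacle; everything else is elementary Gaussian integration and Brownian symmetries. A slightly messier alternative avoids the joint density by starting from the Chapman--Kolmogorov representation $\varphi(z)g(x,z)=\int_0^1 p_s(0,x)\,p_{1-s}(x,z)\,\D s$ and evaluating the integral directly, but the joint-density route is the cleanest.
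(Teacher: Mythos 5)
Your proposal is correct and follows essentially the same route as the paper: reduce to standard Brownian motion (you via the bridge-density cancellation, the paper via exponential change of measure — equivalent observations), use self-similarity for the scaling identities, pull the key formula for $g$ from Borodin--Salminen [1.3.8], then integrate $g$ to get $G$, invoke symmetry of Brownian motion for the identities in $(-x,-z)$, and use Mills' ratio for boundedness of $g$. One small imprecision: for $x>0$ and $z<x$ the joint law of $(L_1(x),W_1)$ has an atom at $\ell=0$ in addition to the density you quote, but since $\ell\mapsto\ell$ vanishes there, this does not affect the computation of the conditional mean and your derivation of $g(x,z)=\overline\Phi(c)/\varphi(z)$ is valid.
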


\subsection{Estimators and the limit theory}
The conditional mean estimators of $L_t$ and $O_t$ are easily derived using stationarity and independence of increments of $X$ together with Lemma~\ref{lem:formulae}:
\begin{align} \label{Lhat}
\widehat{L}_t(x)&=\e[L_t(x)|(X_{i/n})_{i\geq 1}] \\
&= \frac{1}{\sigma\sqrt n} \sum_{i=1}^{\lfloor nt\rfloor} g\left(\frac{\sqrt n}{\sigma}(x-X_{\frac{i-1}{n}}), \frac{\sqrt n}{\sigma}\Delta_i^n X\right)+O_\p(n^{-1/2}), \nonumber\\
\label{Ohat} \widehat O_t(x) &= \e[O_t(x)|(X_{i/n})_{i\geq 1}] \\&= \frac 1n \sum_{i=1}^{\lfloor nt \rfloor } 
G \left(\frac{\sqrt n}{\sigma} (x-X_{\frac{i-1}{n}}), \frac{\sqrt n}{\sigma}   \Delta_i^n X \right)
+O_\p(n^{-1}),\nonumber
\end{align}
where $\Delta_i^n X=X_{\frac{i}{n}}-X_{\frac{i-1}{n}}$. It is noted that the lower order terms can be written down explicitly (they are $0$ when $tn$ is an integer), but we keep them implicit, because they do not have an influence on the limit theorem presented below. 
%Finally, boundedness of this term follows from that of~$f$.

\begin{theorem}\label{thm:loc}
	Assume that $X$ is a linear Brownian motion with drift parameter $\mu \in \R$ and scale $\sigma>0$.
	 Then for any $x\in \R$ we have the functional stable convergence:
	\begin{align}\label{eq:loc_thm}
	n^{\frac{1}{4}} \left(\widehat{L}_t (x)- L_t (x) \right) &\stab \frac{v_l}{\sqrt \sigma}W_{L_t(x)},\\
	n^{\frac 34} \left(\widehat O_t(x) -  O_t(x) \right) &\stab v_o\sqrt\sigma W_{L_t(x)} \label{clton},
	\end{align}
	where $W$ is a Brownian motion independent of $\F$ and 
	\begin{align*}v_l^2&=\int_{\R} \e^0\left[g(y, X_1) - L_1(y)\right]^2 \D y=2\frac{3\log(1+\sqrt 2)-\sqrt 2}{3\sqrt\pi}\approx 0.4626,\\
	v_o^2&= \int_{\R}\e^0[G(y,X_1) - O_1(y)]^2\D y=\frac{13\sqrt 2-15\log(1+\sqrt 2)}{45\sqrt \pi}\approx 0.065.
	\end{align*}
	%\int_{\R} \e_{\mu=0}\VAR[L_{[0,1]}(y)|X_1] \D y=\int_{\R} \left(\e[L_{[0,1]}(y)|X_1]- L_{[0,1]}(y)\right)^2 \D y
\end{theorem}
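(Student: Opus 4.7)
My plan is to exploit the martingale-difference structure of the approximation error. Writing $\mathcal G_n := \sigma(X_{j/n} : j \geq 0)$ and using \eqref{Lhat}--\eqref{Ohat} together with the additivity of local and occupation times, one obtains, modulo a negligible boundary term,
\begin{align*}
\widehat L_t(x) - L_t(x) = -\sum_{i=1}^{\lfloor nt\rfloor}\zeta_i^n,\qquad \widehat O_t(x) - O_t(x) = -\sum_{i=1}^{\lfloor nt\rfloor}\eta_i^n,
\end{align*}
with $\zeta_i^n := \Delta L_i^n - \e[\Delta L_i^n\mid\mathcal G_n]$ and $\eta_i^n := \Delta O_i^n - \e[\Delta O_i^n\mid\mathcal G_n]$, where $\Delta L_i^n := L_{i/n}(x) - L_{(i-1)/n}(x)$ and $\Delta O_i^n := \int_{(i-1)/n}^{i/n}\ind{X_s>x}\D s$. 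Conditional on $\mathcal G_n$ the bridges of $X$ on the intervals $[(i-1)/n,\,i/n]$ are independent, so the two triangular arrays are orthogonal with vanishing conditional mean, placing us in the scope of the stable CLT for conditional martingale-difference arrays.

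After a Girsanov reduction removing the drift (the Radon--Nikodym derivative is bounded in every $L^p$ and preserves stable convergence) and Brownian self-similarity applied to each individual bridge, one gets the distributional identities
\begin{align*}
\zeta_i^n \mid \mathcal G_n &\eqd \tfrac{1}{\sigma\sqrt n}\bigl(L_1(y_i) - g(y_i, w_i)\bigr)\text{ under }\p^0(\,\cdot\mid X_1 = w_i),\\
\eta_i^n \mid \mathcal G_n &\eqd \tfrac{1}{n}\bigl(O_1(y_i) - G(y_i, w_i)\bigr)\text{ under }\p^0(\,\cdot\mid X_1 = w_i),
\end{align*}
with $y_i := \tfrac{\sqrt n}{\sigma}(x - X_{(i-1)/n})$ and $w_i := \tfrac{\sqrt n}{\sigma}\Delta_i^n X$. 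Hence the conditional variances equal $\tfrac{1}{\sigma^2 n}\VAR^0[L_1(y_i)\mid X_1=w_i]$ and $\tfrac{1}{n^2}\VAR^0[O_1(y_i)\mid X_1=w_i]$, respectively, and the scaling $\sigma^{-1/2}$ appearing in \eqref{eq:loc_thm} versus $\sqrt\sigma$ appearing in \eqref{clton} emerges directly from this computation.

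The central analytic step is the convergence of the aggregated conditional variances to the correct deterministic multiple of $L_t(x)$. Setting $\overline h_l(y) := \e^0[(L_1(y) - g(y,X_1))^2]$ and $\overline h_o(y) := \e^0[(O_1(y)-G(y,X_1))^2]$, Lemma~\ref{lem:formulae} and elementary integration will confirm $\int_\R \overline h_l = v_l^2$ and $\int_\R \overline h_o = v_o^2$ with the claimed numerical values. Averaging out the standard-normal $w_i$ (independent of $y_i$ by the independence of increments), passing from the Riemann sum to the integral, and then invoking the occupation-density identity
\begin{align*}
\int_0^t f\bigl(\tfrac{\sqrt n}{\sigma}(x-X_s)\bigr)\D s = \tfrac{\sigma}{\sqrt n}\int_\R f(z)\,L_t\bigl(x - \tfrac{\sigma z}{\sqrt n}\bigr)\D z
\end{align*}
together with continuity of $y\mapsto L_t(y)$, produces
\begin{align*}
n^{1/2}\sum_{i=1}^{\lfloor nt\rfloor}\VAR[\zeta_i^n\mid\mathcal G_n] \cip \tfrac{v_l^2}{\sigma}L_t(x),\qquad n^{3/2}\sum_{i=1}^{\lfloor nt\rfloor}\VAR[\eta_i^n\mid\mathcal G_n] \cip \sigma v_o^2 L_t(x).
\end{align*}
The Lindeberg condition is then verified using the uniform boundedness of $g,G$ from Lemma~\ref{lem:formulae} and moment bounds for $L_1(y), O_1(y)$ under the bridge law, whose Gaussian decay in $|y|$ gives the required summable tails.

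Applying the same variance computation to the increment over $(s,t]$ in place of $[0,t]$ yields $\tfrac{v_l^2}{\sigma}(L_t(x)-L_s(x))$ and $\sigma v_o^2(L_t(x)-L_s(x))$, identifying the functional limit as $W$ time-changed by $L_\cdot(x)$ with $W\perp\mathcal F$. Functional tightness then reduces to a standard fourth-moment estimate on the martingale increments combined with continuity of $t\mapsto L_t(x)$. The main obstacle I anticipate is the honest justification of the Riemann-to-integral approximation in the conditional-variance step: because $\overline h_l, \overline h_o$ decay only polynomially in $y$ and because the process spends atypically much time near $x$ (precisely the quantity being estimated), uniform control of the summands with $y_i$ close to $0$ will require either a truncation argument together with tightness of the truncated part, or a direct second-moment bound using the explicit densities from Lemma~\ref{lem:formulae}.
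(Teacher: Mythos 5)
Your proposal matches the paper's own ``Direct Proof'' in essence: the same martingale-difference decomposition, the same reduction to $\mu=0$ via change of measure and to $\sigma=1$ by rescaling, the same identification of the constants $v_l^2,v_o^2$ as integrated conditional variances, and the same organizing idea of a CLT whose asymptotic variance is $L_t(x)$ times a deterministic constant. There is, however, one genuine point of divergence and one step that you identify as an obstacle but that is in fact a known black box.

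First, the filtration. You condition on $\mathcal G_n = \sigma(X_{j/n}: j\geq 0)$, making the $\zeta_i^n$ conditionally \emph{independent} and invoking a ``stable CLT for conditional martingale-difference arrays.'' The paper instead conditions sequentially on $\mathcal F_{(i-1)/n}$, which makes $\xi_{in}$ a genuine martingale-difference array with respect to a nested filtration. This is not a cosmetic difference: conditioning on $\mathcal F_{(i-1)/n}$ integrates out $w_i=\sqrt n\,\Delta_i^n X$ automatically, so the conditional second moment is immediately $n^{-1/2}h_2(\sqrt n(x-X_{(i-1)/n}))$ with no further averaging. Your choice leaves the conditional variance $\VAR[\zeta_i^n\mid\mathcal G_n]$ depending on $w_i$ as well, so you must interpose an extra martingale LLN to replace $\VAR^0[L_1(y_i)\mid X_1=w_i]$ by its $w_i$-average $h_2(y_i)$; this can be done, but it is an additional step you have not spelled out, and the statement of a ``stable CLT for conditional martingale-difference arrays'' is not quite the right reference here — with $\mathcal G_n$ you would be using a conditional CLT for (conditionally) independent rows, not the martingale CLT. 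The paper's filtration choice lets it invoke the stable functional CLT of Jacod--Shiryaev Theorem~7.28 directly, after verifying the extra orthogonality conditions $\e[\xi_{in}\Delta_i^n X\mid\mathcal F_{(i-1)/n}]=0$ and $\e[\xi_{in}\Delta_i^n N\mid\mathcal F_{(i-1)/n}]=0$ for $N\perp X$ — conditions your write-up does not mention but that are needed to identify the limit as a process independent of~$\F$.

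Second, the ``main obstacle'' you anticipate — passing from the Riemann sum $n^{-1/2}\sum_i h(\sqrt n(x-X_{(i-1)/n}))$ to $L_t(x)\int h$ — is precisely the content of Theorem~1.1 of \cite{Jacod98}, which holds for $h$ bounded and integrable and which the paper invokes as a black box. The boundedness and integrability of $h_2,h_4$ are established in the paper cheaply via Minkowski's inequality, $h_i(y)\le 2^i\,\e[L_1(y)^i]\le \p(\tau_y<1)\,\e[L_1(0)^i]$, and $\int_0^\infty\p(\tau_y<1)\,\D y=\e\,\overline X_1<\infty$. So this is not a gap to be bridged from scratch (and in fact the quantities here are bounded, not merely polynomially decaying as you suggest); you should simply cite Jacod's result rather than attempt the truncation/occupation-density argument you sketch, which would be substantially longer and essentially reproduce that proof.
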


Importantly, our conditional mean estimator~\eqref{Lhat} is a particular example of a more general class of statistics investigated in~\cite{Jacod98}
in the context of continuous diffusion processes. 
The expression for $v_l$  in~\cite{Jacod98} is rather lengthy and hard to evaluate, because of the generality assumed therein. In our case, $g(x, X_1) =\e[L_1(x)|X_1]$ is the conditional expectation and, in fact, a rather short direct proof can be given yielding the constant $v_l^2$ at the same time, see Appendix~\ref{sec:proofs_loc}.

\begin{remark}\label{rem:loc_alt}\rm
The above $v_l^2$ can be compared to $\frac{3}{3\sqrt\pi}(\sqrt 2-1)\approx0.6232$ obtained when instead of the optimal $g(x,z)$ one uses the kernel $\hat g(x)=\int_\R(|x+u|-|x|)\varphi(u)\D u$ depending on $x$ only, see~\cite[(1.27)]{Jacod98}.
The corresponding estimator (for $\sigma=1$) is $\frac{1}{\sqrt n} \sum_{i=1}^{\lfloor nt\rfloor} \hat g(\sqrt n(x-X_{\frac{i-1}{n}}))$, which does not take the increment following $X_{\frac{i-1}{n}}$ into account.
%Intuitively, this increment is important since the inter-observation local time is 
%It is noted that RMSE of the optimal method is about $86\%$ of the one based on $\hat g$.
\end{remark}

\begin{remark} \label{rem:localSDE} \rm
Consider the class of continuous SDEs defined via the equation
\[dX_t = \mu(X_t)dt + \sigma(X_t) dB_t,\]
where $B$ is a standard Brownian motion and $\sigma \in C^1(\R), \mu \in C(\R)$ are such that the above SDE has a unique strong solution. 
In~\cite{Jacod98} the author considers statistics of the form 
\[
L(h;x)_t^n= 
\frac{1}{\sqrt n} \sum_{i=1}^{\lfloor nt\rfloor} h\left(\sqrt n(x-X_{\frac{i-1}{n}}), \sqrt n \Delta_i^n X\right). 
\]
When $\sigma>0$ and $|h(y,z)|\leq \tilde{h}(y) \exp(a|z|)$ with $\tilde{h}$ bounded and satisfying $\int_{\R} |y|^r\tilde{h}(y) \D y <\infty$
for some $r>3$, 
the stable convergence 
\[
n^{1/4} \left( L(h;x)_t^n - c_h(x)L_t(x)\right) \stab v_h(x) W_{L_t(x)}
\]
holds, see~\cite[Theorem 1.2]{Jacod98}. Furthermore, the positive constant $v_h(x)$ (and the proof of stable convergence) 
stems from the simpler model $X_t=\sigma(x) B_t$. Hence, we can conclude that our estimator $\widehat{L}_t (x)$ is asymptotically optimal within 
the class of statistics $L(h;x)_t^n$ in the general setting of continuous SDEs. We believe that the restriction to the class $L(h;x)_t^n$ is not required
and $\widehat{L}_t (x)$ is asymptotically efficient for continuous SDEs. Furthermore, when the function $\sigma$ is unknown the coefficient 
$\sigma(x)$ can be estimated with a $n^{1/3}$-accuracy \cite{FZ93} and we can build a feasible statistic without affecting the asymptotic theory (cf. 
Proposition \ref{prop:cip0} below). 
\end{remark}

\section{Some modifications of the proposed statistics} \label{sec3}
%The scenario where parameters need to be estimated is considered in \S\ref{sec:params_max} and \S\ref{sec:params_loc} for the supremum and local/occupation time, respectively
The main goal of this section is to show that the above developed theory also applies in the setting when the law of $X$ is not known, but a consistent estimator of the parameters is available.
Furthermore, we construct certain simplified estimators of the supremum in order to cope with potential numerical issues.
%Finally, we give some comments concerning possible genera
 
\subsection{Unknown parameters}\label{sec:params_loc}
The main results of Theorem~\ref{thm:max} and Theorem~\ref{thm:loc} above assume that the law of the process $X$ is known, which is hard to accept in practice.
At most, we are willing to assume that the process $X$ belongs to some parametric class, and we distinguish between the following two: 
\begin{itemize}
	\item[(i)] Linear Brownian motion with drift parameter $\mu \in \R$ and scale $\sigma>0$, where for convenience we set $\a=2$. 
	As we remarked earlier neither the statistics nor the limits in Corollary \ref{cor:BM} and Theorem \ref{thm:loc} depend on $\mu$,
	which, in fact, can not be estimated consistently. Hence, the only 
	parameter of interest is $\theta=\sigma$. 
\item[(ii)] Non-monotone self-similar L\'{e}vy process which is naturally parameterized~\cite[\S I.5]{zolotarev_book} by a triplet $\theta=(\a,\rho,\lambda)$, where $\rho=\p(X_1>0)$ is the positivity parameter and $\lambda=\e[\log(|X_1|)]$ is related to the scale.
It is noted that $\rho\in[1-1/\a,1/\a]$ for $\a\in(1,2]$, and $\rho\in(0,1)$ for $\a\in(0,1]$ which excludes monotone processes.
This parametrization, unlike the one with skewness parameter, is continuous in the sense that convergence of parameters holds iff the processes converge.
\end{itemize}

Suppose now that we have a consistent estimator $\theta_n$ of the true parameter~$\theta$. Feasible estimators for supremum, local time and occupation time measure are now obtained via the plug-in approach. In particular, we have 
\begin{align*}
\widetilde T_n^\mean = M_n+\int_0^\infty \left(1-H_n^{\theta_n}(x)\right)\D x, \quad \widetilde T_n^\med= M_n+(H_n^{\theta_n})^{-1}(1/2),
\end{align*} 
where $H_n^{\theta_n}(x)= \prod_{j=0}^{n-1}F_{\theta_n}(n^{1/\a_n}(x+\Delta^n_j),n^{1/\a_n}(\Delta_{j}^n-\Delta_{j+1}^n))$, and 
\begin{align*}
\widetilde L_t(x) &= \frac{1}{\sigma_n \sqrt n} \sum_{i=1}^{\lfloor nt\rfloor} g\left(\frac{\sqrt n}{\sigma_n}(x-X_{\frac{i-1}{n}}), \frac{\sqrt n}{\sigma_n}\Delta_i^n X\right)+O_\p(n^{-1/2}), \\
\widetilde O_t(x) &= \frac 1n \sum_{i=1}^{\lfloor nt \rfloor } 
G \left(\frac{\sqrt n}{\sigma_n} (x-X_{\frac{i-1}{n}}), \frac{\sqrt n}{\sigma_n}   \Delta_i^n X \right)
+O_\p(n^{-1}).
\end{align*}
The construction of estimators $\theta_n$ of the unknown parameter $\theta$ for models (i) and (ii) is a well understood problem in the statistical literature. In particular, in class (i) the maximum likelihood estimator of $\sigma$ is given by
\[
\sigma_n^2 = \sum_{i=1}^n (\Delta_i^n X)^2 
\] 
and it holds that $\sqrt{n}(\sigma_n^2 - \sigma^2) \convd \mathcal N(0, 2 \sigma^4)$. Numerous theoretical results on parametric estimation of model (ii) can be found in e.g. \cite{masuda2015parametric}. Since the maximum likelihood estimator of $\theta$ is not explicit, we rather propose to use the following statistics:
\begin{align*}
\alpha_n&= \frac{q\log(2)}{\log\left(\sum_{i=2}^n |X_{i/n} - X_{(i-2)/n}|^q\right) - \log\left(\sum_{i=1}^n |X_{i/n} - X_{(i-1)/n}|^q\right)}, \\
\rho_n&= \frac{1}{n} \sum_{i=1}^n 1_{\{\Delta_i^n X>0\}}, \qquad \lambda_n = \frac{1}{n} \sum_{i=1}^n \log(n^{1/\a_n} |\Delta_i^n X|),
\end{align*}
where $q\in (-1/2,0)$. Additionally, we need to ensure that our parameters are legal, and in particular $\a_n$, when larger than $1$, is truncated at $(\rho_n\vee(1-\rho_n))^{-1}$.
Due to self-similarity of $X$ and the law of large numbers we have that 
$$\frac{\sum_{i=2}^n |X_{i/n} - X_{(i-2)/n}|^q}{
\sum_{i=1}^n |X_{i/n} - X_{(i-1)/n}|^q} \cip 2^{q/\a},$$ 
which gives the idea behind the construction of $\a_n$. Indeed, all estimators are weakly consistent and since $\e[|X_1|^{2q}]<\infty$
for $q\in (-1/2,0)$  we easily conclude that   
\[
\a_n - \a = O_{\mathbb P}(n^{-1/2}), \qquad \rho_n - \rho = O_{\mathbb P}(n^{-1/2}), \qquad \lambda_n - \lambda = O_{\mathbb P}(n^{-1/2} \log(n)). 
\]
The proposed estimators are not efficient, but they suffice for our purposes, see Proposition \ref{prop:cip0_max} below.

It turns out that the limit theory presented in Theorem~\ref{thm:max} and Theorem~\ref{thm:loc} continues to hold
under a rather  weak assumption on a consistent estimator $\theta_n$ of $\theta$; in particular, this assumption is satisfied by estimators we proposed above. In other words, the difference between the modified and original estimators is negligible in the right sense.

\begin{proposition}\label{prop:cip0_max}
Consider parametric class (i) with $\theta=\sigma$ and $\sigma_n \cip \sigma$ or (ii) with $\theta=(\alpha,\rho,\lambda)$ and $\theta_n\cip \theta$, 
$(\a_n-\a) \log n \cip 0$. Then 
\begin{align*}
&n^{1/\a}(\overline T_n^\mean-\widetilde T_n^\mean)\cip 0,\qquad \text{for } \a\in(1,2],\\
&n^{1/\a}(\overline T_n^\med-\widetilde T_n^\med)\cip 0.
\end{align*}
Moreover, the limit distributions in~\eqref{eq:T1} and~\eqref{eq:T2} are continuous in~$\theta$.
\end{proposition}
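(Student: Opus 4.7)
The claim naturally splits into two parts: continuity of the limit laws in $\theta$, and negligibility of the difference between the feasible and infeasible statistics after scaling by $n^{1/\alpha}$. I would handle continuity first. In case (i) the kernel $F_\theta(x,y)$ is given explicitly by \eqref{eq:F_BM} and is trivially continuous in $\sigma$; in case (ii) one has joint continuity of $F_\theta(x,y)$ in $(\theta,x,y)$ (this uses Lemma~\ref{lem:F} plus the fact that the parametrisation $\theta=(\alpha,\rho,\lambda)$ renders the law of $X$ weakly continuous). The limit process $\xi^\theta$ in \eqref{eq:xi} also depends continuously on $\theta$, since \eqref{eq:xi} is preserved under weak convergence of $X$. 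Continuity of the distribution of $V^\theta-\int_0^\infty(1-H^\theta(x))\,\D x$ and of $V^\theta-(H^\theta)^{-1}(1/2)$ in $\theta$ follows by the continuous mapping theorem.

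For the negligibility, apply the change of variable $y=n^{1/\alpha}x$ to \eqref{eq:Tmean_id} to write
\[
n^{1/\alpha}(\overline T_n^\mean-M_n)=\int_0^\infty\!\bigl(1-H_n^\theta(yn^{-1/\alpha})\bigr)\D y,\quad n^{1/\alpha_n}(\widetilde T_n^\mean-M_n)=\int_0^\infty\!\bigl(1-H_n^{\theta_n}(yn^{-1/\alpha_n})\bigr)\D y.
\]
Under the assumption $(\alpha_n-\alpha)\log n\cip 0$ we have $n^{1/\alpha-1/\alpha_n}\cip 1$, so it suffices to show that the integrals themselves differ by $o_\p(1)$. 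For each fixed $y\geq 0$, the joint stable convergence \eqref{eq:Hconv} together with the continuity $F_{\theta_n}\to F_\theta$ uniformly on compacts and the zooming-in convergence \eqref{eq:zooming} gives the pointwise stable convergence $H_n^{\theta_n}(yn^{-1/\alpha_n})\stab H^\theta(y)$; the same limit is attained by $H_n^\theta(yn^{-1/\alpha})$, so the integrand converges to $0$ in probability pointwise in $y$. To pass this to the integral, I would re-use the uniform-in-$n$ tail estimates on $1-H_n^\theta(yn^{-1/\alpha})$ that underpin the proof of Theorem~\ref{thm:max}, and verify that they remain valid, with constants depending only continuously on $\theta$, when $\theta$ is perturbed to $\theta_n$ (so they furnish an integrable dominating function on the event $\{\theta_n$ close to $\theta\}$, whose complement has vanishing probability). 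A Vitali/dominated convergence argument then yields the required convergence in probability of the integrals, establishing the mean case.

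The median case is handled by the same rescaling: write $n^{1/\alpha}(\overline T_n^\med-M_n)=[H_n^\theta(\cdot n^{-1/\alpha})]^{-1}(1/2)$ and similarly for $\widetilde T_n^\med$. The stable functional convergence \eqref{eq:Hconv}, combined with continuity and strict monotonicity of the limit $H^\theta$, gives convergence of the inverses at level $1/2$; the same argument applied with $\theta$ replaced by $\theta_n$ (using Step~1 continuity and the compact-uniform convergence of $F_{\theta_n}$) identifies the same stable limit, so the difference vanishes in probability. The main obstacle is Step~4: securing uniform tail bounds for $1-H_n^{\theta'}(yn^{-1/\alpha'})$ that are \emph{locally uniform in} $\theta'$, not merely valid at a single parameter value. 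This forces one to revisit the tail analysis of $1-F_\theta(x,y)$ from the proof of Theorem~\ref{thm:max} and verify that the constants there depend continuously on $\theta$, which is plausible but tedious. Once this is in hand, the slight mismatch of the scaling exponents $\alpha_n$ versus $\alpha$ is absorbed by the factor $n^{1/\alpha-1/\alpha_n}\cip 1$, and the proof is complete.
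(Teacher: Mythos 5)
Your high-level architecture matches the paper's: reduce to the convergence $\int_0^\infty (H_n^{\theta_n}(xn^{-1/\a})-H_n(xn^{-1/\a}))\,\D x\cip 0$ and $\sup_{x}|H_n^{\theta_n}(xn^{-1/\a})-H(x)|\cip 0$, absorb the exponent mismatch via $n^{1/\a-1/\a_n}\cip 1$, and reduce continuity of the limit laws to continuity of $F_\theta$ and of the law of $\xi^\theta$. There is, however, a genuine gap in the step you treat as routine. You assert joint continuity of $F_\theta(x,y)$ in $(\theta,x,y)$ as following from the fact that the parametrisation $\theta=(\a,\rho,\lambda)$ makes the law of $X$ weakly continuous. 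But weak convergence of $X^{\theta_n}$ to $X^\theta$ does \emph{not} directly give convergence of the conditional distribution functions $\p_{\theta_n}(\overline X_1\leq x\mid X_1=y)$; the paper explicitly flags this obstruction in its ``general case'' discussion (``joint convergence does not necessarily imply convergence of the conditional distributions''). The paper's actual proof of continuous convergence $F^{\theta_n}(x_n,y_n)\to F^\theta(x,y)$ proceeds through the first-passage representations \eqref{eq:F1}--\eqref{eq:F2} from Lemma~\ref{lem:F} together with a dedicated density estimate, Lemma~\ref{lem:density}, which shows via Fourier inversion and integration by parts that $\sup_x|f^{\theta_n}(x)-f^\theta(x)|\to0$ and that $f^{\theta_n}_t(x)$ is bounded uniformly over $n$, $t\in(0,1)$, $|x|\geq\epsilon$. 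Without this, the pointwise convergence $H_n^{\theta_n}(yn^{-1/\a_n})\to H^\theta(y)$ on which you build is not secured. By contrast, you flag the locally uniform-in-$\theta$ tail bounds for $1-F_\theta$ as ``the main obstacle'', while the paper disposes of that step quickly, observing that $\a_n$ is arbitrarily close to $\a$ with high probability so the truncation bounds from the proof of Theorem~\ref{thm:max} carry over with essentially unchanged constants. The genuinely missing ingredient in your proposal is the density convergence and uniform bound of Lemma~\ref{lem:density}, which is what makes the reduction to ``continuous convergence of $F$'' actually close.
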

This shows that the estimators $\widetilde T_n^\mean$ and $\widetilde T_n^\med$ are asymptotically efficient in the sense that they are asymptotically equivalent to the respective optimal estimators relying on the knowledge of true parameters.
In class (ii) the true $\a$ is not known, but in view of Proposition~\ref{prop:cip0_max} the assumption $(\a_n-\a)\log n\cip 0$ guarantees that
\begin{align*} 
n^{1/\a_n}(\overline X_1-\widetilde T_n^\mean)  &\stab V -\int_0^\infty (1-H(x))\D x,\qquad \text{ when }\a\in(1,2],\\
n^{1/\a_n}(\overline X_1-\widetilde T_n^\med)&\stab V-H^{-1}(1/2).
\end{align*}
Furthermore, the limit distributions are well approximated by their analogues corresponding to parameter~$\theta_n$, and so we may construct asymptotic confidence intervals for the estimators~$\widetilde T_n^\mean$
and~$\widetilde T_n^\med$.

With respect to local/occupation time we have the following result. 
\begin{proposition}\label{prop:cip0}
Consider class (i) and assume that
\begin{equation}\label{eq:rate}
n^{1/4}(\sigma-\sigma_n)\cip 0.
\end{equation} 
Then for any $x\in\R,T>0$ it holds that
\begin{align*}
&n^{1/4}\sup_{t\leq T}\left|\widehat L_t(x)-\widetilde L_t(x)\right|\cip 0, &n^{3/4}\sup_{t\leq T}\left|\widehat O_t(x)-\widetilde O_t(x)\right|\cip 0.
\end{align*}
\end{proposition}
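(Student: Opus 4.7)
The plan is to apply the mean value theorem (MVT) in the scale parameter and reduce the problem to tightness of certain empirical sums structurally resembling those in $\widehat{L}_t(x)$ and $\widehat{O}_t(x)$. I work on the event that $\sigma_n$ is bounded away from $0$ and $\infty$, whose probability tends to one. Put $\tilde X := X/\sigma$, $a_i := \sqrt n(x/\sigma - \tilde X_{(i-1)/n})$, $b_i := \sqrt n\,\Delta_i^n\tilde X$ and $r_n := \sigma/\sigma_n$, so that $n^{1/4}|r_n - 1| \cip 0$ by assumption. Substituting in the definitions, one checks that (modulo the $O_{\mathbb P}(n^{-1/2})$ and $O_{\mathbb P}(n^{-1})$ terms inherent in these estimators)
\[
\widetilde L_t(x) - \widehat L_t(x) = f_t(r_n) - f_t(1), \qquad \widetilde O_t(x) - \widehat O_t(x) = F_t(r_n) - F_t(1),
\]
where $f_t(r) := \tfrac{r}{\sigma\sqrt n}\sum_{i=1}^{\lfloor nt\rfloor} g(ra_i, rb_i)$ and $F_t(r) := \tfrac{1}{n}\sum_{i=1}^{\lfloor nt\rfloor} G(ra_i, rb_i)$.

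Differentiating in $r$ away from the null set where $g$ fails to be smooth, one obtains $f_t'(r) = \tfrac{1}{\sigma\sqrt n}\sum_i h(ra_i, rb_i)$ with $h(a,b) := g(a,b) + ag_1(a,b) + bg_2(a,b)$, and $F_t'(r) = \tfrac{1}{n}\sum_i[-a_i g(ra_i, rb_i) + b_i G_2(ra_i, rb_i)]$, where we have used the identity $G_1 = -g$ (immediate from \eqref{eq:occ_density}). The MVT then yields
\[
\sup_{t\leq T}|f_t(r_n) - f_t(1)| \leq |r_n - 1|\, A_n, \qquad \sup_{t\leq T}|F_t(r_n) - F_t(1)| \leq |r_n - 1|\, B_n,
\]
with $A_n := \sup_{\xi\in J}\tfrac{1}{\sigma\sqrt n}\sum_{i\leq \lfloor nT\rfloor}|h(\xi a_i, \xi b_i)|$, $B_n := \sup_{\xi\in J}\tfrac{1}{n}\sum_{i\leq \lfloor nT\rfloor}(|a_i g(\xi a_i, \xi b_i)| + |b_i G_2(\xi a_i, \xi b_i)|)$, and $J$ any compact neighbourhood of $1$. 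It thus suffices to show $A_n = O_{\mathbb P}(1)$ and $n^{1/2} B_n = O_{\mathbb P}(1)$, for then
\[
n^{1/4}\sup_{t \leq T}|\widetilde L - \widehat L| \leq n^{1/4}|r_n-1|\, A_n \cip 0, \qquad n^{3/4}\sup_{t \leq T}|\widetilde O - \widehat O| \leq n^{1/4}|r_n - 1|(n^{1/2} B_n)\cip 0.
\]

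The main technical challenge, and the principal obstacle, is the verification of these two tightness bounds. Inspection of the explicit formulas in Lemma~\ref{lem:formulae} shows that $|h|$, $|ag(a,b)|$ and $|bG_2(a,b)|$ are bounded by functions with Gaussian-type decay in $|a|$ (inherited from the $\varphi$-factors) and at most polynomial growth in $|b|$. Consequently, by the same local-time approximation methodology used in the proof of Theorem~\ref{thm:loc}, each of the non-normalised sums $\sum_{i\leq \lfloor nT\rfloor}|\cdot|$ is of order $O_{\mathbb P}(\sqrt n)$, in fact converging in probability to a finite multiple of $L_T(x/\sigma)$; uniformity in $\xi \in J$ follows from a standard bracketing and continuous-dependence argument since the limiting constant is a continuous function of $\xi$ on $J$. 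A subsidiary point is the justification of the MVT despite the corner of $g$ along $\{z = x\}$: this is handled either via the almost sure regularity of the observations (for each $i$, $b_i \neq a_i$ almost surely) or by replacing MVT with a direct finite-difference bound extracted from the explicit formulas in Lemma~\ref{lem:formulae}.
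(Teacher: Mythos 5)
Your approach coincides with the paper's: a mean value theorem in the scale parameter, followed by an application of Jacod's convergence result for local-time-type sums (\cite[Thm.\ 1.1]{Jacod98}) to the dominating sum. Your reparametrization $r=\sigma/\sigma_n$ is equivalent to the paper's direct parametrization $g_\sigma(x,z)=\sigma^{-1}g(x/\sigma,z/\sigma)$, $G_\sigma(x,z)=G(x/\sigma,z/\sigma)$, and the rate bookkeeping (local time at $n^{1/4}$, occupation time gaining a factor $n^{1/2}$ because the scale-derivative of $G$ behaves like a local-time kernel) is identical.

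The place where your writeup stops short is exactly the step you flag as ``the principal obstacle'': the uniform bound on the scale-derivatives of $g$ and $G$ over a compact neighbourhood of $\sigma$. This is not a routine inspection --- it is the substance of the paper's Lemma~\ref{lem:jacod_cond}, which proves the dominating bound $c\exp\bigl(a(|z|-|x|)\bigr)$ (exponential, not Gaussian, in the first argument; exponential in the second, whereas you assert only polynomial growth, which is a minor misdescription that happens to be on the safe side). Without that lemma there is no proof. One small simplification worth noting: the $\sup_{\xi\in J}$ inside your sums $A_n,B_n$, and the ensuing ``bracketing and continuous-dependence'' argument, is unnecessary. Once the derivative is dominated uniformly over $J$ by a fixed function $\tilde g$ satisfying Jacod's condition (B-0), one applies \cite[Thm.\ 1.1]{Jacod98} directly to $\tilde g$, producing a single $O_\p(1)$ quantity with no supremum over $\xi$ to control; this is how the paper argues and it avoids the extra layer. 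Your observation about the corner of $g$ along $\{z=x\}$ is correct and handled exactly as you suggest (the event $\{b_i=a_i\}$ is null, and the scale-derivative is continuous through the corner, as verified in the proof of Lemma~\ref{lem:jacod_cond}).
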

This again shows
that the estimators $\widetilde L_t(x)$ and $\widetilde O_t(x)$ are asymptotically efficient, and provides the respective asymptotic confidence bounds. Condition \eqref{eq:rate} is quite expected in the case of local times since $n^{1/4}$ is the corresponding rate of convergence in
\eqref{eq:loc_thm}, but it is surprising that this condition is also sufficient to conclude the asymptotic efficiency of $\widetilde O_t(x)$. Roughly speaking, the reason for condition \eqref{eq:rate} to be sufficient in the latter case is that partial derivatives of $G$ correspond to the local time asymptotics thus changing the convergence rate from $n^{3/4}$ to $n^{1/4}$. We refer to~\S\ref{unknownpar}  for more details.

\subsection{Truncation of products in supremum estimators}\label{sec:simplified}
Here we return to the assumption that the law of $X$ is known.
Consider supremum estimators defined in~\S\ref{sec:opt_estimators_sup} in terms of the conditional distribution function $H_n(x)$.
When the number $n$ of observations is large, it may be desirable to reduce the number of terms in the product defining $H_n(x)$, in order to avoid numerical issues and to speed-up the calculations. 
This is especially true when $X$ is not a linear Brownian motion and so the function $F$ is not explicit.

Intuitively, we may want to keep the terms which are formed from the observations closest to the maximum.
Thus, we  let $H(x;k)$ for $k\in\mathbb N_+$ be the analogue of $H_n(x)$, but such that the product has at most $2k$ terms and, in particular, the indices $j$ are chosen such that $0\vee (I_n-k)\leq j\leq (I_n+k-1)\wedge (n-1)$ with $I_n$ being the index of the maximal observation.
%That is, we use $k$ terms formed from the observations immediately on the left of the maximum, and another $k$ terms formed from the observations immediately on the right, when possible.
Define $\overline T^\mean_{n,k}$ and $\overline T^\med_{n,k}$ as before but using $H_n(x;k)$ instead of $H_n$.
%It is noted that $H_{n,k}(x)$ is a cdf of a certain distribution on $\R_+$, and $T_{n,k},\widehat T_{n,k}$
%are the corresponding mean and median bth shifted by the maximum~$m$.

Letting $I\in\mathbb Z$ be the unique number satisfying $\xi_{I+U}=V$ (it achieves the minimum $V$ in~\eqref{Maxconv}), we define
\begin{align*}
H(x;k)&:=\prod_{I-k\leq j\leq I+k-1}F\left(x+\xi_{j+U}-V,\xi_{j+U}-\xi_{j+1+U}\right),\quad x\geq 0.\end{align*}
We now have the limit result analogous to Theorem~\ref{thm:max}:
\begin{corollary}\label{cor:truncation}
For any $\a\in(0,2]$ it holds that
\begin{align}
n^{1/\a}(\overline X_1-\overline T^\mean_{n,k})  &\stab V -\int_0^\infty (1-H(x;k))\D x,\\
n^{1/\a}(\overline X_1-\overline T^\med_{n,k})&\stab V-H^{-1}(1/2;k).
\end{align}
\end{corollary}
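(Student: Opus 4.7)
The plan is to mirror the strategy used in the proof of Theorem~\ref{thm:max}, exploiting the fact that $H_n(x; k)$ involves only $\leq 2k$ factors concentrated around the argmax $I_n$, which reduces the problem to a finite-dimensional continuous-mapping statement. First I would establish the joint stable convergence
\[
\bigl(n^{1/\a}(\overline X_1 - M_n),\; H_n(xn^{-1/\a}; k)\bigr) \stab \bigl(V,\; H(x; k)\bigr)
\]
uniformly for $x$ in compact subsets of $[0, \infty)$. The zoom-in convergence~\eqref{eq:zooming} is in fact stable with limit $\xi$ independent of $\mathcal F$, and combined with $\{n\tau_1\} \stab U$ this yields joint stable convergence of the finite family of rescaled distances $(n^{1/\a} \Delta^n_{I_n + j})_{-k \leq j \leq k - 1}$ to $(\xi_{j + U} - V)_{-k \leq j \leq k-1}$. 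Since $\tau_1 \in (0,1)$ $\p$-a.s., the event that the full window of $2k$ indices fits inside $[0, n-1]$ has probability tending to $1$, so the boundary truncation contributes only a vanishing event. Applying the continuous mapping theorem via joint continuity of $F$ (Lemma~\ref{lem:F}) then delivers the claimed joint convergence of the truncated product.

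For the median estimator I would invoke $\p$-a.s.\ continuity and strict monotonicity of $H(\cdot; k)$ in a neighborhood of level $1/2$, so that the inverse map $H \mapsto H^{-1}(1/2)$ is continuous at $H(\cdot; k)$. Together with the uniform convergence on compacts from the first step this gives $n^{1/\a}(\overline T^\med_{n,k} - M_n) \stab H^{-1}(1/2; k)$, from which the conclusion follows after subtracting $n^{1/\a}(\overline X_1 - M_n) \stab V$.

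For the mean estimator I would start from the identity
\[
n^{1/\a}(\overline T^\mean_{n,k} - M_n) = \int_0^\infty \bigl(1 - H_n(x n^{-1/\a}; k)\bigr) \D x.
\]
Since removing factors from a product of quantities in $[0,1]$ can only enlarge it, we have $H_n(x; k) \geq H_n(x)$ and hence $1 - H_n(\cdot; k) \leq 1 - H_n(\cdot)$ pointwise. The $L^1$ tail control developed for $1 - H_n$ in the proof of Theorem~\ref{thm:max} thus dominates the truncated integrand, and a truncation-plus-uniform-integrability argument combining this bound with the pointwise stable convergence from the first step lets us pass the limit inside the integral.

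The main obstacle is the tail estimate needed for the mean estimator, which in the strictly stable case relies on precise polynomial bounds on $1 - F(x, y) = \p(\overline X_1 > x \mid X_1 = y)$. However, since we may dominate by the non-truncated $1 - H_n(x)$, the technical work essentially reduces to what is already carried out in Theorem~\ref{thm:max}, with the simplification that only finitely many factors need to be tracked in the limit.
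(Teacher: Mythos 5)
Your overall plan follows the paper's strategy (finite-dimensional convergence of the window around the argmax, then continuous mapping), but two steps need repair.

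First, the domination argument for the mean estimator does not work on the full stated range. You write that since $1-H_n(\,\cdot\,;k)\leq 1-H_n(\,\cdot\,)$ pointwise, the $L^1$ tail control already obtained for $1-H_n$ in Theorem~\ref{thm:max} transfers. But that control (Lemma~\ref{lem:conv}, \eqref{eq:trunc2}) is established only for $\a\in(1,2]$, which is exactly why the mean estimator statement in Theorem~\ref{thm:max} excludes $\a\leq 1$. Corollary~\ref{cor:truncation} is claimed for all $\a\in(0,2]$; as the paper notes right after the corollary, the point is precisely that restricting to finitely many factors removes the need to exclude $\a\leq 1$. To see why, use \eqref{eq:ineq} to bound $1-H_n(xn^{-1/\a};k)$ by a sum of at most $2k$ terms of the form $\overline F(x+u^{\n}_j-V^\n,u^{\n}_j-u^{\n}_{j+1})$, each of which, for fixed second argument, decays like $cx^{-2\a-1}$ by Lemma~\ref{lem:bounds} (cf.\ Remark~\ref{rem:finiteness}); since $2\a+1>1$ for every $\a>0$, each term is integrable in $x$, and a finite sum of such terms gives the uniform-in-$n$ tail control needed to pass to the limit in $\int_0^\infty(1-H_n(xn^{-1/\a};k))\D x$. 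Your proposed domination by the untruncated integrand cannot yield this.

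Second, your first step asserts joint convergence of the $2k$ rescaled distances around the argmax index $I_n$ directly from \eqref{eq:zooming} and $\{n\tau_1\}\stab U$. Those facts control the window centred at $\lceil n\tau_1\rceil$, not at $I_n$; and $I_n-\lceil n\tau_1\rceil$ depends on the \emph{entire} sequence $(u^\n_i)_{i\in\mathbb Z}$, not on any fixed finite window. You must therefore show that the argmax is localized to a bounded window around $\lceil n\tau_1\rceil$ \emph{uniformly in $n$}, i.e.\ that
\[
\lim_{T\to\infty}\sup_n\p\Bigl(\inf_{|t|>T}\xi^\n_t<a\Bigr)=0
\]
for any $a>0$. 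This is precisely what the paper establishes via the representation of Lemma~\ref{lem:bertoin} (and self-similarity), and it is the crux of the argument; it cannot be inferred from the zoom-in convergence of a finite window alone. (Relatedly, the limiting window is $(\xi_{I+j+U}-V)_{-k\leq j\leq k-1}$ with $I$ the argmin over all of $\mathbb Z$; writing it as $(\xi_{j+U}-V)_{-k\leq j\leq k-1}$ as in your sketch is either an index slip or conceals the same localization step.)
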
 
It turns out that we do not need to exclude $\a\in(0,1]$ in the case of modified conditional mean estimator, because the number of terms is kept finite.

\subsection{Comments on the  general case in supremum estimation}
It is likely that Theorem~\ref{thm:max} can be generalized to an arbitrary L\'evy process satisfying
the following weak regularity condition:
\[ (a_u X_{t/u})_{t\geq 0} \convd (\widehat X_{t})_{t\geq 0}\qquad\text{ as }u\to\infty,\]
for some positive function~$a_u$ and necessarily self-similar \levy process $\widehat X$.
Importantly, the general versions of~\eqref{eq:zooming} and~\eqref{Maxconv} are proven in~\cite{ivanovs_zooming}; here the limiting objects correspond to~$\widehat X$.

There are, however, two very serious difficulties. Firstly, joint convergence does not necessarily imply convergence of the conditional distributions. Thus, one needs to use the underlying structure to show that 
\[F_{1/n}(x/a_n,y/a_n)=\p(a_n\overline X_{1/n}\leq x|a_n X_{1/n}=y)\to \widehat F(x,y).\]
Secondly, the proof of uniform negligibility of truncation in~\S\ref{sec:uniform_neg} crucially depends on $X$ being self-similar. This part may be notoriously hard for a general L\'evy process~$X$.

\section{Numerical illustration of the limit laws}\label{sec:numerics}
In this section we perform some numerical experiments in order to illustrate the limit laws in Theorem~\ref{thm:max} 
and Theorem~\ref{thm:loc}. For simplicity we take $X$ to be a standard Brownian motion and, additionally, a one-sided stable process in supremum estimation which is motivated by the semi-explicit formula for the function $F$ in Proposition~\ref{prop:F}.
All the densities are obtained from $10,000$ independent samples using standard kernel estimates. The number of samples is reduced to $1,000$ in the case of the one-sided stable process.

\subsection{Supremum estimation for Brownian motion}
Consider a standard Brownian motion $X$ and the limiting random variable $V$ in~\eqref{Maxconv}, as well as
$V_\text{\rm mean}:=V-\int_0^\infty (1-H(x))\D x$ and $V_\text{\rm med}:=V-H^{-1}(1/2)$ in~\eqref{eq:T1} and~\eqref{eq:T2}, respectively.
Recall that all of these quantities are explicit, see also Corollary~\ref{cor:BM}, but they all depend on infinitely many observations $\xi_{j+U},j\in\mathbb Z$ of the two-sided 3-dimensional Bessel process~$\xi$.
We approximate these quantities by setting $\xi_{j+U}=\infty$ for $j<-50$ or $j\geq 50$, which effectively amounts to considering 100 epochs centered around~0; choosing twice as many epochs had negligible effect on the results below. 
The resulting densities are depicted in Figure~\ref{fig:max}.
In Table~\ref{table:bm} we report the Root Mean Squared Error (RMSE), Mean Absolute Error (MAE), and the narrowest $95\%$-confidence interval length
for each of the limiting distributions. It is noted that, indeed, $V_\text{\rm mean}$ has the smallest RMSE and $V_\text{\rm med}$ has the smallest MAE, and the respective distribution are very similar.
\begin{figure}[h!]
	\includegraphics[width=0.7\textwidth]{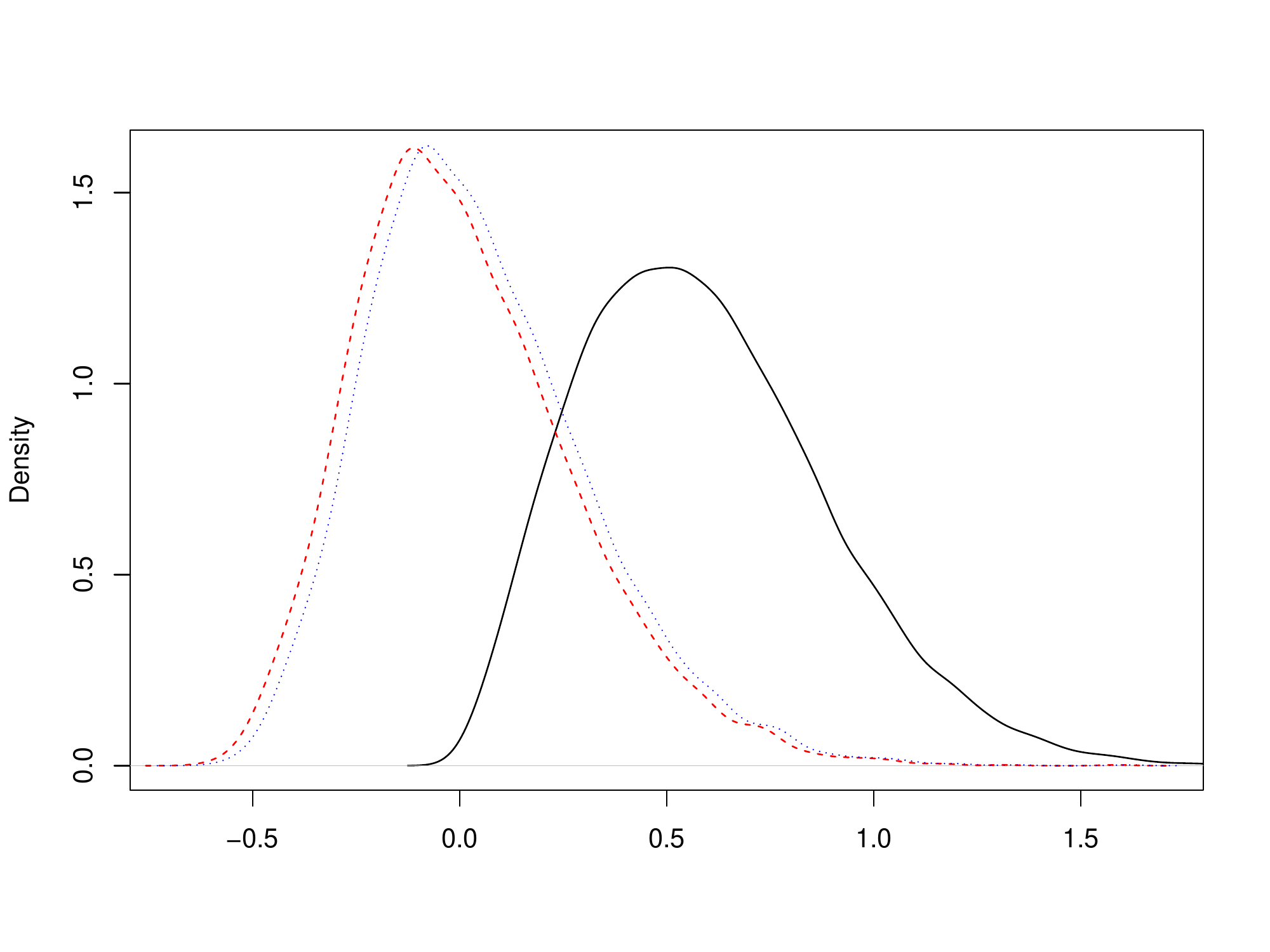}
	\caption{Simulated densities of $V$ (solid black), $V_\text{\rm mean}$ (dashed red) and $V_\text{\rm med}$ (dotted blue) in the Brownian case}
	\label{fig:max}
\end{figure}
\begin{table}
	\caption{Some statistics in the Brownian case}\label{table:bm}
	\begin{tabular}{c|c|c|c|c|c}
		&$V$ & $V_\text{\rm mean}$ & $V_\text{\rm med}$ & $V_\text{\rm shift}$ & $V^1_\text{\rm mean}$\\
		\hline
		RMSE & 0.66 & 0.26 & 0.27 &0.30&0.29\\
		MAE & 0.59 & 0.21 & 0.21 &0.24&0.22\\
		$95\%$ conf.\ int.\ length &1.14&1.03&1.03&1.14&1.06
	\end{tabular}
\end{table}

Observe that the main problem of the standard estimator $M_n$ is that it is downward biased and so $V$ is not centered.
This, however, can be easily remedied since according to~\cite{asmussen_glynn_pitman1995} \[\e V=-\zeta\left(\frac{1}{2}\right)\frac{1}{\sqrt{2\pi}}\approx 0.5826,\]
where $\zeta$ is the Riemann zeta function. In other words, we may consider an asymptotically centered estimator $M_n+\frac{1}{\sqrt n}\e V$, which leads to $V_\text{\rm shift}:=V-\e V$. Finally, we also consider the truncated conditional mean estimator $\overline T^\text{\rm mean}_{n,1}$ based on $H(x;1)$, which is a product of two terms and thus only moderately more complicated to evaluate as compared to $M_n$, see~\S\ref{sec:simplified}. The respective limit is denoted by $V^1_\text{\rm mean}$. Relative comparison of the latter two together with $V_\text{\rm mean}$ is provided in Figure~\ref{fig:max2}, see also Table~\ref{table:bm}.
\begin{figure}[h!]
	\includegraphics[width=0.7\textwidth]{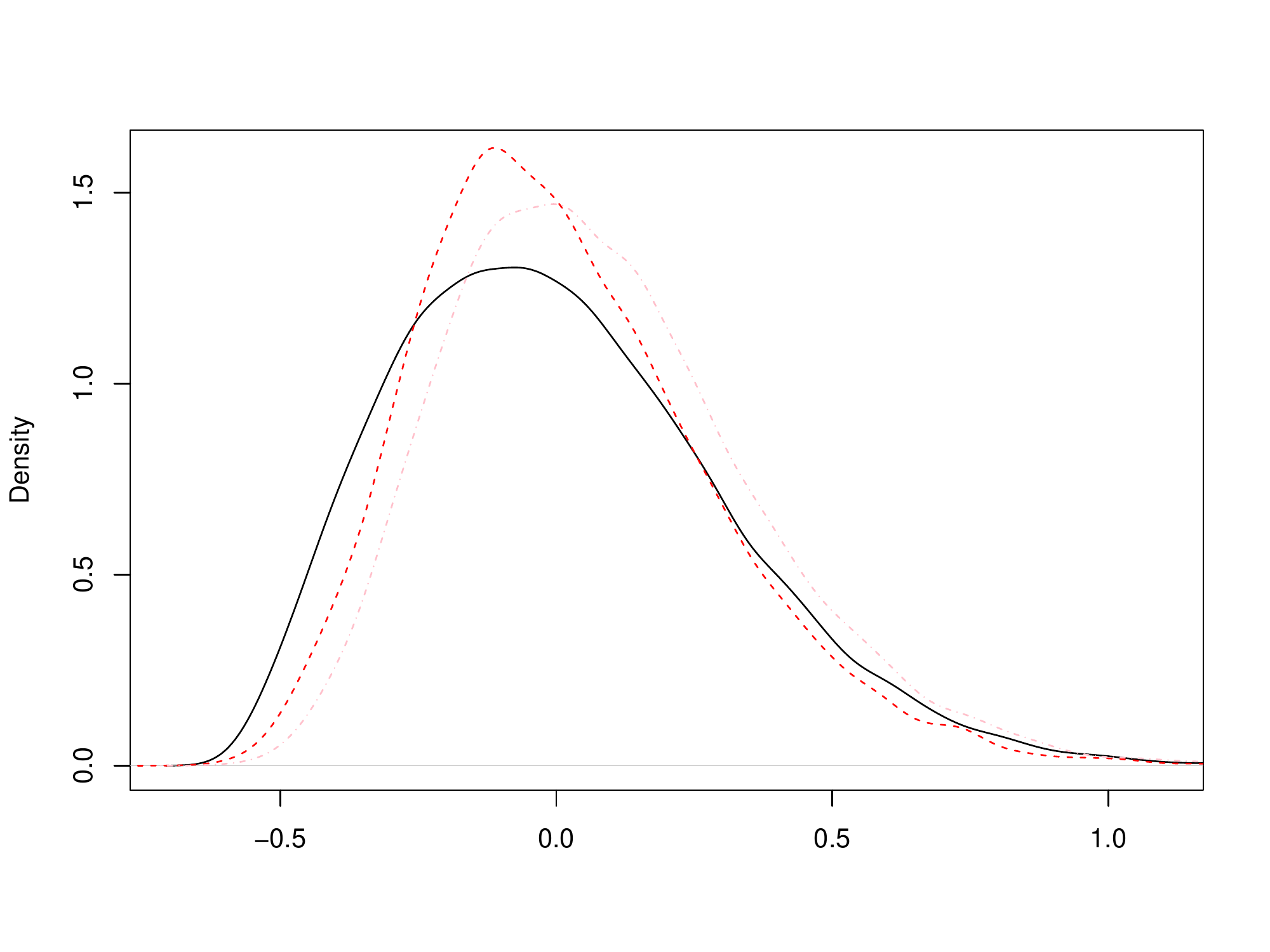}
	\caption{Simulated densities of $V_\text{\rm shift}$ (solid black), $V_\text{\rm mean}$ (dashed red) and $V^1_\text{\rm mean}$ (dot-dashed pink) in the Brownian case}
	\label{fig:max2}
\end{figure}

In conclusion, the conditional mean and conditional median estimators are very similar to each other and considerably better than the standard estimator~$M_n$ in terms of RMSE and MAE. Nevertheless, the other simple estimators discussed above are only slightly worse than the optimal ones.

\subsection{Supremum for one-sided stable process}
Here we consider a strictly stable L\'evy process with $\a=1.8$, standard scale and only negative jumps present, i.e., the skewness parameter is $\beta=-1$.
 Note that the results in the opposite case $\beta=1$ must be similar according to Proposition~\ref{prop:symmetry}. 
The conditional distribution function $F$ is numerically evaluated using the expressions in Proposition~\ref{prop:F}, see Figure~\ref{fig:F}.
\begin{figure}[h!]
	\centering
	\begin{subfigure}[b]{0.47\textwidth}
		\includegraphics[width=\textwidth]{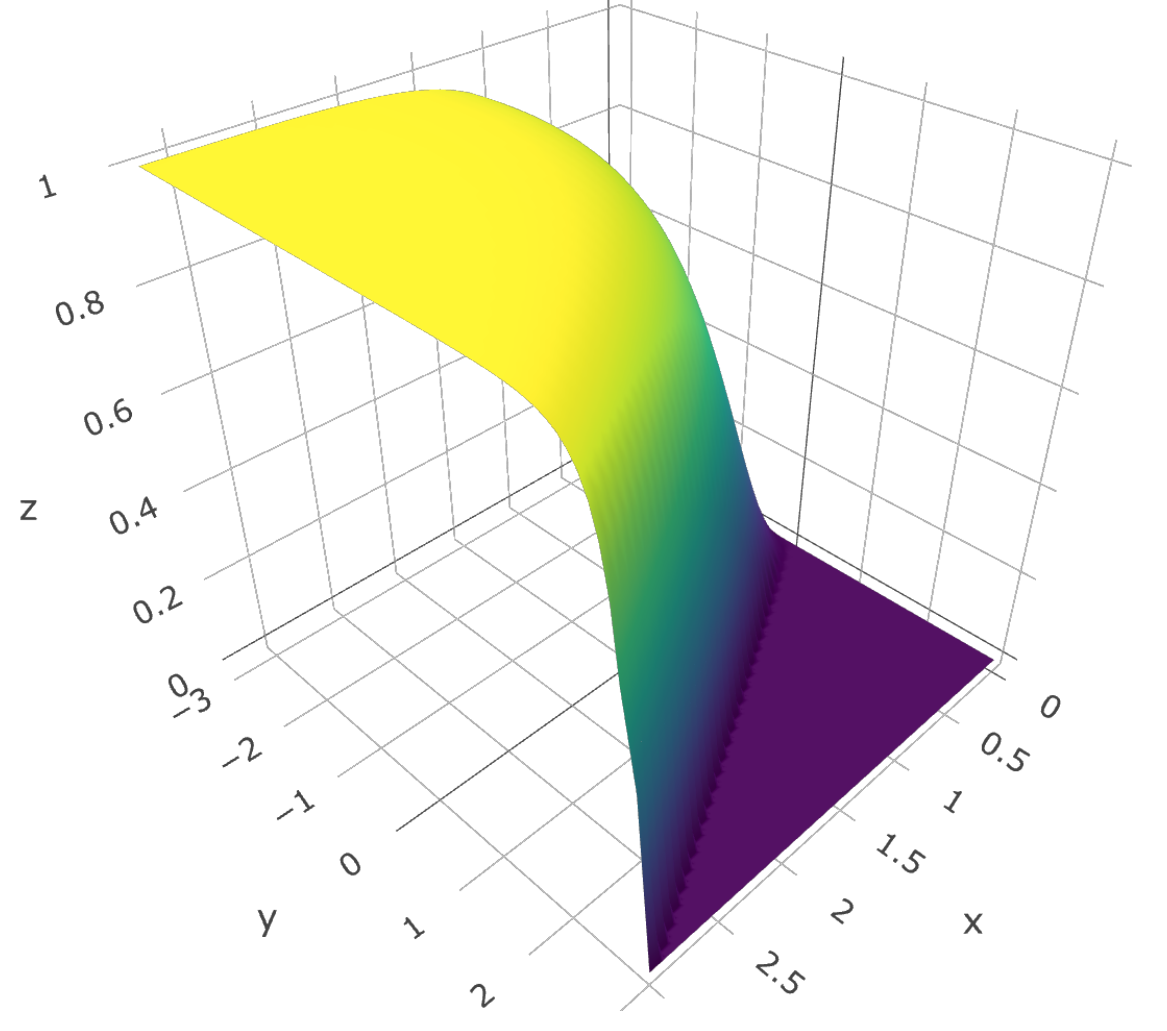}
		\caption{The conditional distribution function $F(x,y)$}
		\label{fig:F}
	\end{subfigure}
	\quad %add desired spacing between images, e. g. ~, \quad, \qquad, \hfill etc. 
	%(or a blank line to force the subfigure onto a new line)
	\begin{subfigure}[b]{0.47\textwidth}
		\includegraphics[width=\textwidth]{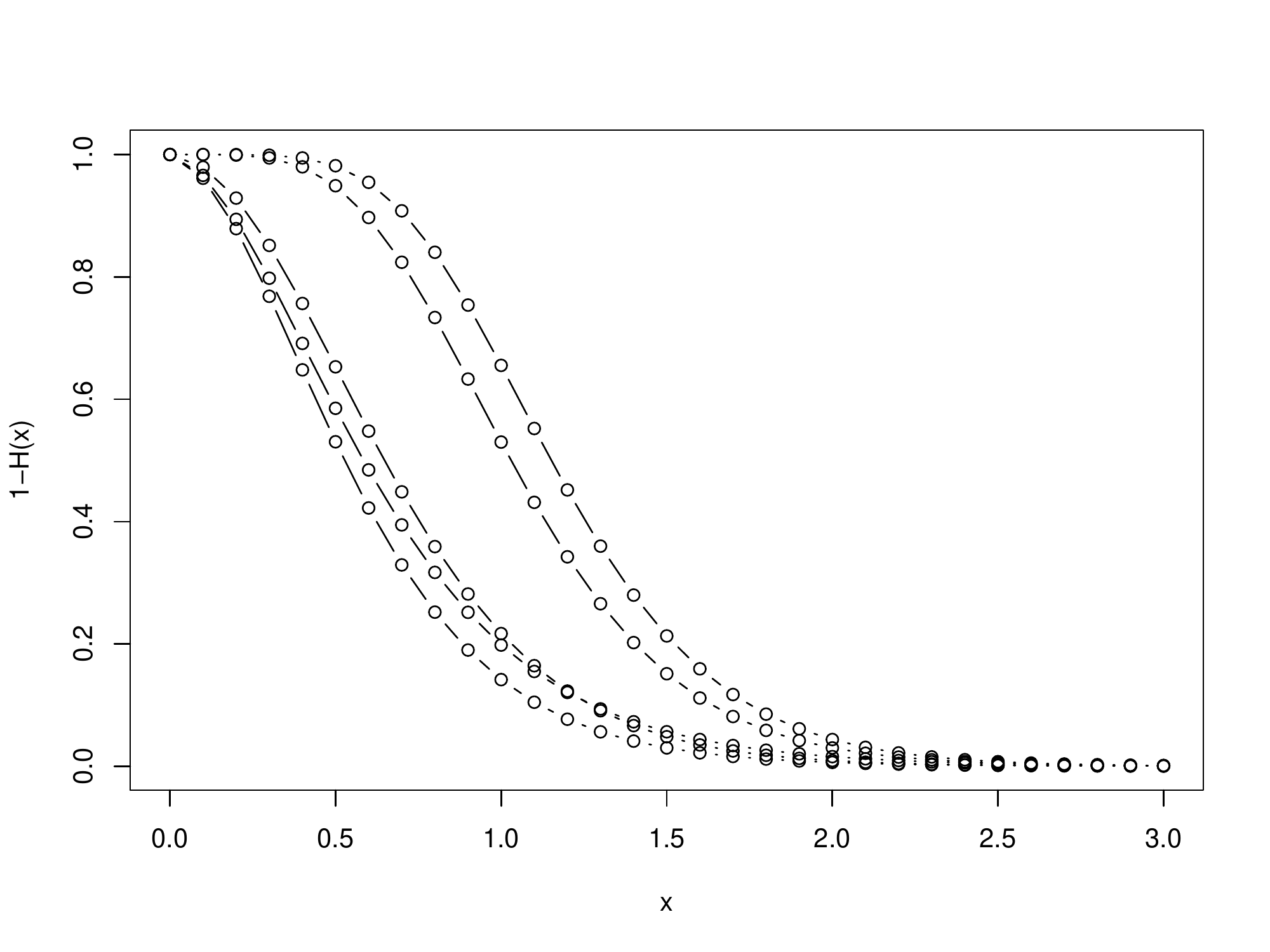}
		\caption{Samples of $1-H(x)$ approximations}
				\label{fig:H}
	\end{subfigure}
\caption{The stable case}
\end{figure}

In this case we perform a number of approximations. 
Firstly, simulation of $\xi$ is not obvious (unlike the Brownian case) and so we approximate the limiting object $(V,H(x))$ by $(n^{1/\a}(\overline X_1-M_n),H_n(x n^{-1/\a}))$ with $n=300$, see~\eqref{eq:Hconv}. Instead of scaling $\overline X_1,M_n,\Delta_i$ with $n^{1/\a}$ we perform the simulation of the process $X$ on the interval $[0,n]$, 
which is allowed by self-similarity of~$X$. 
Furthermore, $X$ is simulated on the grid with step-size $1/m$ for $m=300$, which yields an approximation of $\overline X_n$ further corrected by the easily computable asymptotic mean error $m^{-1/\a}\e V$, see~\cite{asmussen_ivanovs_twosided}.
Next, we take (at most) $30$ terms in the product defining $H_n$ based on the observations closest to the maximum, that is we replace it by $H_n(\cdot;15)$ defined in~\S\ref{sec:simplified}. Finally, $\int_0^\infty(1-H(x))\D x$ is approximated using the trapezoidal rule with step size $0.1$ and truncation at~$x=3$, see Figure~\ref{fig:H}; the same approximation is used in calculation of the inverse.

The results are presented in Figure~\ref{fig:results} and Table~\ref{table:stable}. They are quite similar to the results in the Brownian case. 
\begin{table}
	\caption{Some statistics in the stable case}\label{table:stable}
\begin{tabular}{c|c|c|c|c|c}
	&$V$ & $V_{mean}$ & $V_{med}$ & $V_\text{\rm shift}$\\
	\hline
	RMSE &0.87&0.41 & 0.42 & 0.43\\
	MAE & 0.75 & 0.32 & 0.32 & 0.34\\
	$95\%$ conf.\ int.\ length &1.45&1.42&1.42 &1.45
\end{tabular}
\end{table}
\begin{figure}[h!]
	\includegraphics[width=0.7\textwidth]{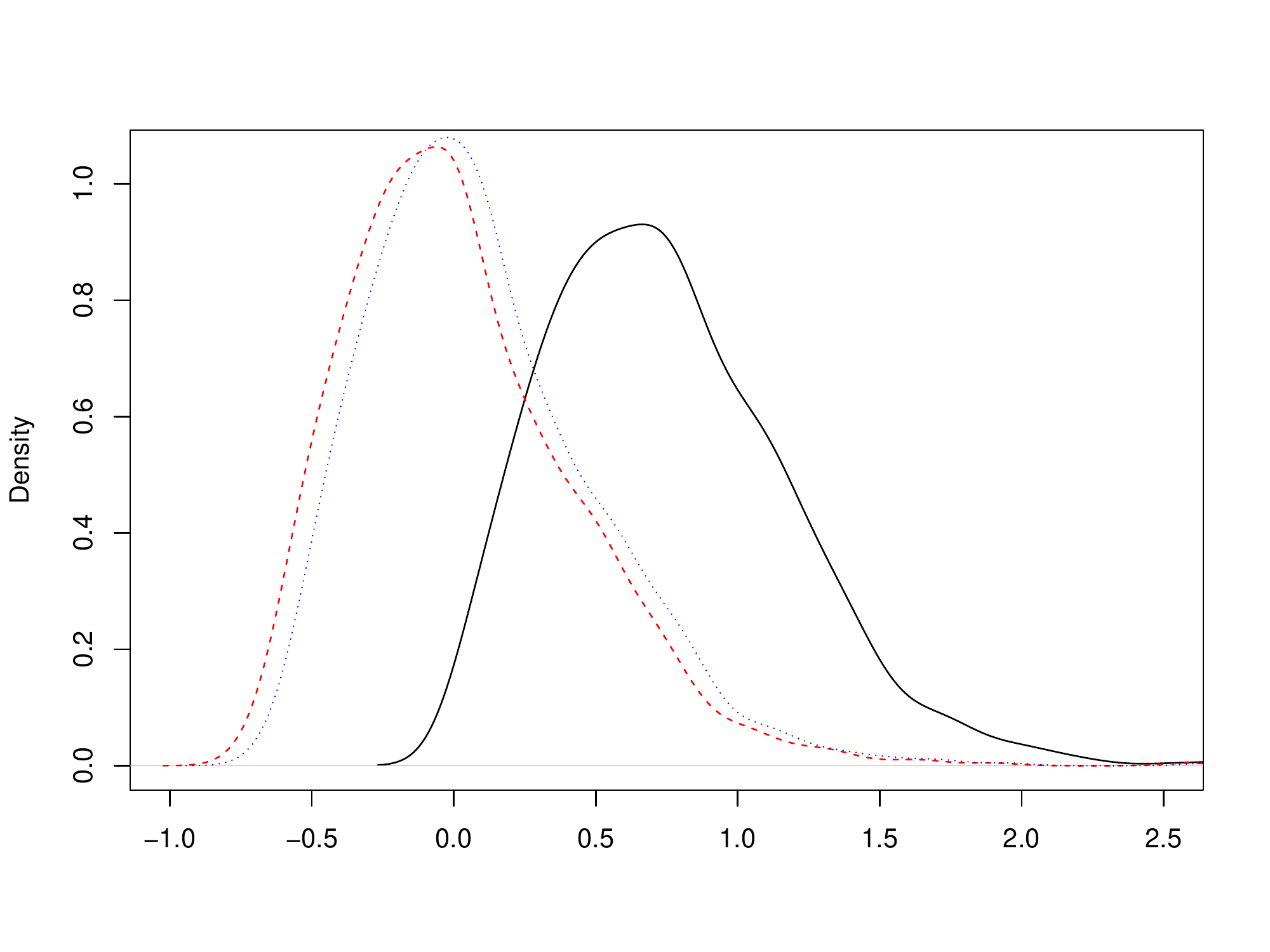}
	\caption{Simulated densities of $V$ (solid black), $V_\text{\rm mean}$ (dashed red) and $V_\text{\rm med}$ (dotted blue) in the stable case}
	\label{fig:results}
\end{figure}

\subsection{Local time and occupation time for Brownian motion}
Let again $X$ be a standard Brownian motion and choose $x=0,t=1$.
We use $\widehat L_1(0)$ with $n=10,000$ as a substitute for the true $L_1(0)$, which then allows to sample (approximately) from the limit distribution in~\eqref{eq:loc_thm}. Next, we use the same sample path to construct $\widehat L_1(0)$ with $n=100$, which  allows to sample from the pre-limit expression in~\eqref{eq:loc_thm}. Finally, we also take a standard estimator $\frac{1}{2\sqrt n}\#\{i\in[0:n-1]: |X_{i/n}|<\frac{1}{\sqrt n}\}$ for $n=100$.
The respective densities are depicted in Figure~\ref{fig:loc_time}. The ratio of variances for $n=100$ is $1:1.64$, which can be compared to $1:1.35$ for the more advanced estimator mentioned in Remark~\ref{rem:loc_alt} (here we use the exact expressions of the limits).
\begin{figure}[h!]
	\includegraphics[width=0.7\textwidth]{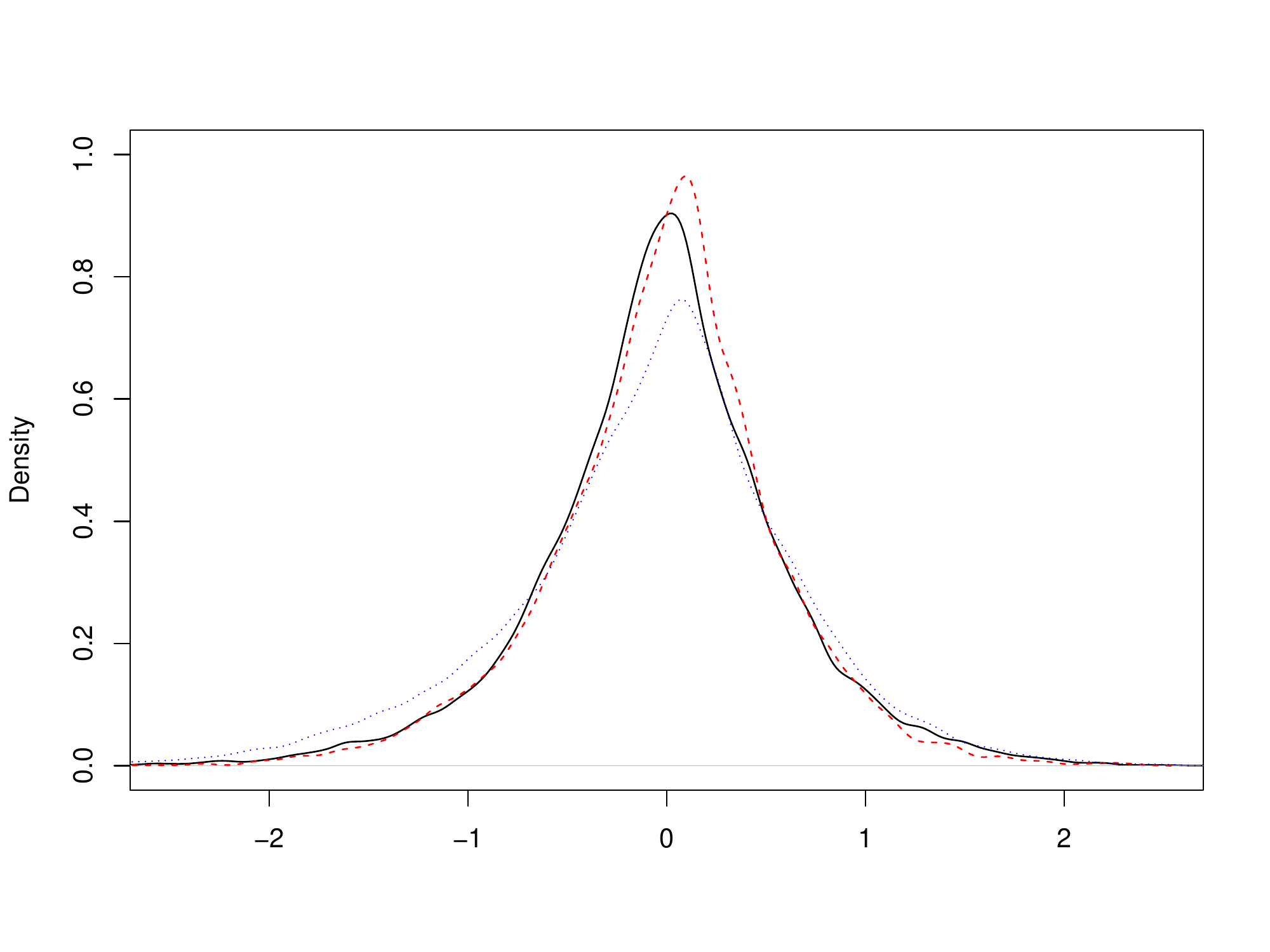}
	\caption{Local time: the densities of the limit (solid black) and pre-limit (dashed red) in~\eqref{eq:loc_thm} for $n=100$, as well as pre-limit for the standard estimator (dotted blue)}
	\label{fig:loc_time}
\end{figure}

We perform a similar procedure for the occupation time in $(0,\infty)$. Here the standard estimator is $\frac{1}{n}\#\{i\in[0:n-1]:X_{i/n}\geq 0\}$.
The respective densities are given in Figure~\ref{fig:occ_time}, and we see a very substantial improvement. The ratio of variances for $n=100$ is $1:2.64$.
\begin{figure}[h!]
	\includegraphics[width=0.7\textwidth]{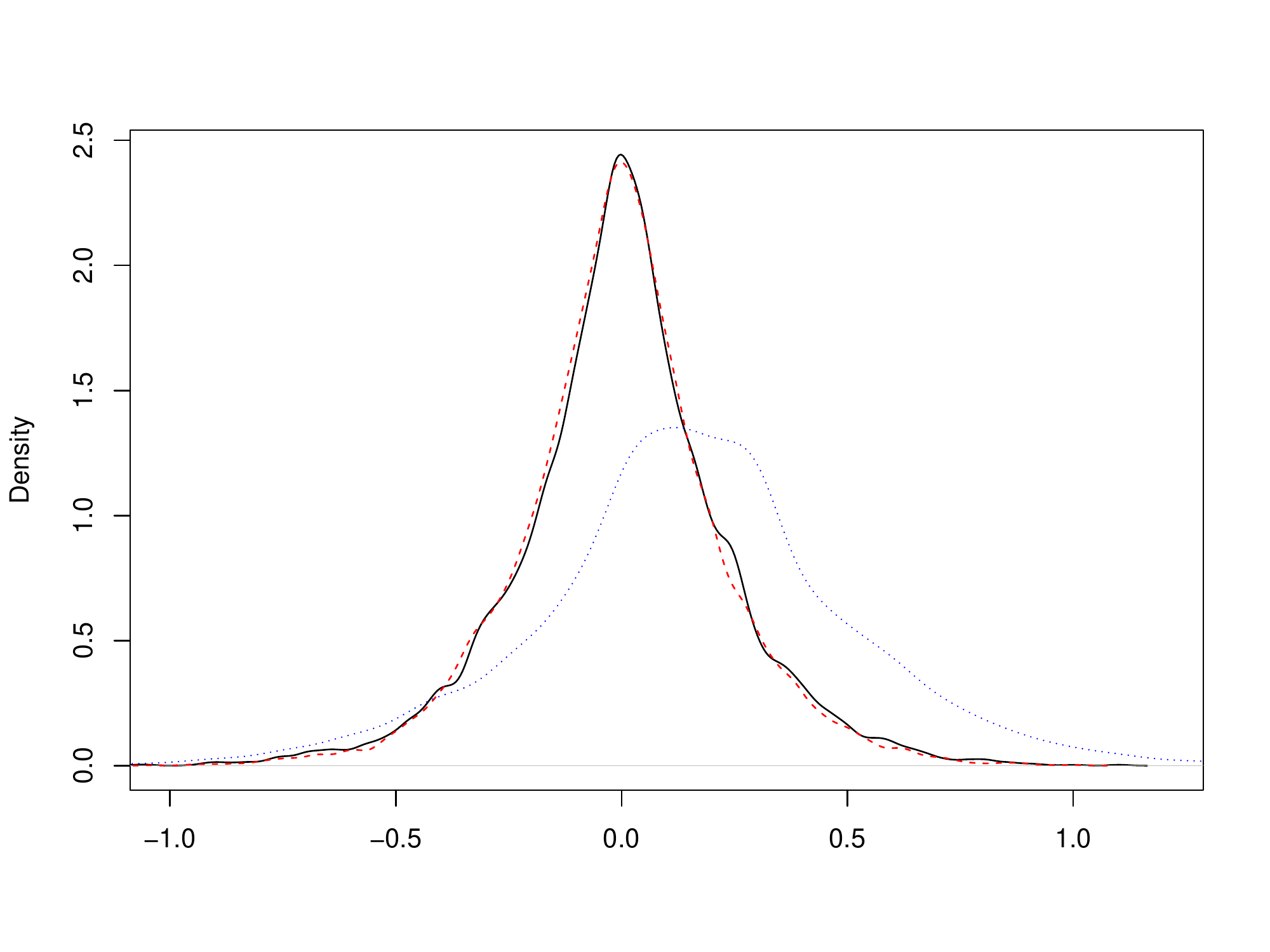}
	\caption{Occupation time: the densities of the limit (solid black) and pre-limit (dashed red) in~\eqref{clton} for $n=100$, as well as pre-limit for the standard estimator (dotted blue)}
	\label{fig:occ_time}
\end{figure}

\section*{Acknowledgements}
We would like to express our gratitude to Johan Segers for his comments concerning pre-estimation of model parameters.

\appendix
\section{Proofs for supremum estimation}\label{sec:proofs_sup}
In the following all positive constants will be denoted by $c$ although the may change from line to line.

\subsection{Duality}
In this section we establish a duality result for a general L\'evy process~$X$. 
Even though it is not needed for the proofs, we present this duality, because it explains certain structure in the main results.
To this end, consider the process $X'_t=-X_t$ and the associated quantities $\overline{X'}_1, M'_n,H'_n(x),F'_t(x,y)$, see~\S\ref{sec:opt_estimators_sup}.
\begin{proposition}\label{prop:symmetry}
	Let $X$ be an arbitrary L\'evy process. Then
		\[\left(\overline{X'}_1-M'_n,(H_n'(x))_{x\geq 0}\right)\,\stackrel{d}{=}\,\left(\overline X_1-M_n,(H_n(x))_{x\geq 0}\right).\]
	Furthermore, $F'_t(x,y)=F_t(x-y,-y)$.
\end{proposition}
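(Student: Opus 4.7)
The plan is to invoke the classical time-reversal duality for Lévy processes: if $X$ is a Lévy process on $[0,t]$, then $\widetilde X_s := X_t - X_{(t-s)-}$ for $s \in [0,t]$ is again a Lévy process with the same law. I will apply this with generic $t$ to derive the identity for $F_t$, and with $t=1$ to handle the joint statement.

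For the identity $F'_t(x,y) = F_t(x-y,-y)$, first note that $\widetilde X_t = X_t - X_{0-} = X_t$ and that $\overline{\widetilde X}_t = X_t - \inf_{s\leq t} X_{s-} = X_t - \underline X_t$ almost surely; the latter uses the standard fact that for a Lévy process $\inf_{s\leq t} X_{s-} = \inf_{s\leq t} X_s$ a.s.\ (the infimum is approached from the right by values $X_{(\sigma+\epsilon)-} \to X_\sigma$). Then, substituting $\underline X_t = X_t - \overline{\widetilde X}_t$ yields
\[
F'_t(x,y) \;=\; \p(\underline X_t \geq -x \mid X_t = -y) \;=\; \p(\overline{\widetilde X}_t \leq x-y \mid \widetilde X_t = -y) \;=\; F_t(x-y,-y),
\]
the last equality using $\widetilde X \eqd X$.

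For the joint statement (taking $t=1$), the same reasoning gives $\overline{\widetilde X}_1 = X_1 - \underline X_1$ a.s., and since $X$ almost surely has no fixed-time jumps, $\widetilde X_{i/n} = X_1 - X_{(n-i)/n}$ a.s.\ for every $i \in [0{:}n]$. With $\widetilde M_n := \max_i \widetilde X_{i/n}$ and $\widetilde Z := (\widetilde X_{i/n})_{i\in[1:n]}$, this rearranges to
\[
\overline{\widetilde X}_1 - \widetilde M_n \;=\; \min_i X_{i/n} - \underline X_1 \;=\; \overline{X'}_1 - M'_n \qquad \text{a.s.}
\]
Moreover $\sigma(\widetilde Z) = \sigma(Z) = \sigma(Z')$: the value $X_1 = \widetilde X_1$ lies in $\widetilde Z$, so the original grid values $X_{i/n} = X_1 - \widetilde X_{(n-i)/n}$ are recoverable, and $Z' = -Z$ trivially generates the same $\sigma$-algebra as $Z$. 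Consequently $\widetilde H_n$ and $H'_n$ are both continuous versions of the conditional distribution function of the same (a.s.\ equal) random variable given the same $\sigma$-algebra, whence $\widetilde H_n(x) = H'_n(x)$ for all $x \geq 0$, a.s. Since $\widetilde X \eqd X$ implies $(\overline{\widetilde X}_1 - \widetilde M_n, \widetilde H_n) \eqd (\overline X_1 - M_n, H_n)$, the claimed joint equality in distribution follows.

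The main obstacle is the careful bookkeeping of the càdlàg boundary behaviour: one must confirm that the deterministic grid times are a.s.\ continuity points of $X$, that $\inf_{s\leq 1} X_{s-} = \inf_{s\leq 1} X_s$ a.s., and that the two sides of the claimed identity are linked by a single almost-sure equation (once coupled with the $\sigma$-algebra identification), rather than merely agreeing in law. These points are standard for Lévy processes but must be made explicit to close the argument.
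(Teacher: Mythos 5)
Your proof is correct and follows essentially the same time-reversal argument as the paper; the only cosmetic difference is the direction of the reversal (you reverse to $\widetilde X_s = X_1 - X_{(1-s)-}$, a copy of $X$, and couple its quantities with those of $X'$ via a.s.\ equalities, while the paper reverses to $X''_t = X_{(1-t)-}-X_1$, a copy of $-X$, and couples with $X$). Your abstract identification of $\widetilde H_n$ and $H'_n$ as continuous versions of the same conditional distribution given the same $\sigma$-algebra is a mild repackaging of the paper's explicit rewriting of the conditional probability, and the rest of the bookkeeping (grid times are a.s.\ continuity points, $\inf_{s\le 1}X_{s-}=\inf_{s\le 1}X_s$ a.s.) matches the paper's.
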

\begin{proof}
We take $(X''_t)_{t\in[0,1]}:=(X_{(1-t)-}-X_1)_{t\in[0,1]}$, which has the law of $(-X_t)_{t\in[0,1]}$ (standard time-reversal). Then
$\overline{X''}_1-M''_n=\overline X_1-X_1-(M_n-X_1)=\overline X_1-M_n$, because $X$ does not jump at $j/n$ almost surely. 
Letting $x_j$ be the observation of $X_{j/n}$ we find that
\begin{align*}
&H_n(x)=\p(\overline{X''}_1-M''_n\leq x| X_{k/n}-X_1=x_k-x_n\, \forall k\in[1:n-1], X_1=x_n)\\
&=\p(\overline{X''}_1-M''_n\leq x| X''_{(n-k)/n}=x''_{n-k}\, \forall k\in[1:n-1], X_1''=x''_n)=H'_n(x).
\end{align*}
Finally, 
\begin{align*}
F'(x,y)&=\p(\overline{X''}_1\leq x|X''_1=y)=\p(\overline X_1-X_1\leq x|X_1=-y)\\
&=\p(\overline X_1\leq x-y|X_1=-y)=F(x-y,-y)
\end{align*}
and the same reasoning works for $F'_t(x,y)$ when time-reverting at~$t$.
\end{proof}
In view of Proposition~\ref{prop:symmetry}, the errors $\overline X_1-\overline T_n^\mean$ and $\overline X_1-\overline T_n^\med$
have the same distribution as the respective errors for the process~$-X$. Thus, the corresponding limit results must stay the same when the skewness parameter $\beta$ is flipped to the opposite. In the proofs we may safely assume that $\beta\geq 0$, say.  

\subsection{On the function $F$ in the stable case}
Before starting the proof of the main result we establish some basic properties of the conditional probability $F(x,y)$ in the case of a strictly $\a$-stable process when it is not explicit.
Throughout this subsection we assume that $X$ is a strictly $\a$-stable process with skewness parameter $\beta\in[-1,1]$. Note that the boundary values $\beta=-1$ and $\beta=1$ correspond to spectrally negative and spectrally positive processes, respectively; in both cases we must have $\a\in(1,2)$, because we have excluded monotone processes.

It is well known~\cite[p.\ 88]{sato} that $X_t$ has a continuous strictly positive bounded density, call it $f_t(x)$.
Moreover, by self-similarity 
\begin{equation}\label{eq:f_t}f_t(x)=t^{-1/\a}f(t^{-1/\a}x)\qquad\text{ with }f=f_1.\end{equation}
Furthermore, $f(x)\sim cx^{-\a-1} $ as $x\to\infty$ when $\beta\neq -1$, and otherwise it decays faster than an exponential function~\cite[Eq. (14.37)]{sato}.
 %, which is unimodal~\cite[p.\ 404]{sato}.

Let us define the first passage times $\tau^\pm_x=\inf\{t\geq 0:\pm X_t>\pm x\}$ above and below a given level $x$.
\begin{lemma}\label{lem:F}
The function $F(x,y)$ is jointly continuous. Moreover, $F(x,y)=0$ for $x\leq y_+$, and otherwise
\begin{align}\overline F(x,y)f(y):&=(1-F(x,y))f(y)\nonumber\\
&=\e\left[(1-\tau^+_x)^{-1/\a}f((1-\tau^+_x)^{-1/\a}(y-X_{\tau^+_x}));\tau^+_x<1\right]\label{eq:F1}\\
&=\e\left[(1-\tau^-_{y-x})^{-1/\a}f((1-\tau^-_{y-x})^{-1/\a}(y-X_{\tau^-_{y-x}}));\tau^-_{y-x}<1\right],\label{eq:F2}
\end{align}
where $\e[Y;A]= \e[Y1_A]$.
%Furthermore, the function
%\[\int_0^\infty \left(1-F(z+x,y)\right)\D z\]
%is finite and jointly continuous in $(x,y)$ on the same domain.
\end{lemma}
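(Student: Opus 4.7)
The statement $F(x,y)=0$ for $x\leq y_+$ is almost immediate: since $X_0=0$ and $X_1=y$, one has $\overline X_1\geq y_+$ almost surely, and strict inequality at $x=y_+$ comes from $\overline X_1>X_1$ a.s., which by an absolute continuity argument persists after conditioning on $X_1=y$. The substance of the lemma is the two integral representations and joint continuity, and the natural strategy is to derive the formulas first and then show that the versions they define are jointly continuous, so that they automatically serve as acceptable versions of the conditional distribution.

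For \eqref{eq:F1} I would compute the joint law $\p(\overline X_1>x,X_1\in\D y)$ by conditioning on $\mathcal F_{\tau^+_x}$. On $\{\tau^+_x<1\}$ the post-$\tau^+_x$ increment is independent of $\mathcal F_{\tau^+_x}$ and has density $f_{1-\tau^+_x}$, so
\begin{equation*}
\p(\overline X_1>x,X_1\in\D y)=\e\bigl[f_{1-\tau^+_x}(y-X_{\tau^+_x});\tau^+_x<1\bigr]\,\D y.
\end{equation*}
Dividing by $f(y)>0$ and using the self-similar scaling \eqref{eq:f_t} for $f_{1-\tau^+_x}$ yields \eqref{eq:F1}. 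For \eqref{eq:F2} I would employ the time reversal already used in Proposition~\ref{prop:symmetry}, namely $(X_{(1-t)-}-X_1)_{t\in[0,1]}\eqd(-X_t)_{t\in[0,1]}$, giving
\[\p(\overline X_1\leq x\mid X_1=y)=\p(\underline X_1\geq y-x\mid X_1=y),\]
and then apply the strong Markov property at $\tau^-_{y-x}$ in exactly the same manner. This dual derivation is preferable in the spectrally positive case, where $X_{\tau^+_x}=x$ has no overshoot and the formulas simplify accordingly.

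For joint continuity I would work directly with the explicit right-hand sides. The density $f$ is continuous and bounded by \cite[p.\ 88]{sato}, and by \eqref{eq:f_t} the map $(t,z)\mapsto f_t(z)$ is jointly continuous on $(0,1]\times\R$; combined with the fact that $x\mapsto(\tau^+_x,X_{\tau^+_x})$ is right-continuous with left limits a.s.\ and that its jumps at any fixed $x$ occur with probability zero (since the law of $\tau^+_x$ is atomless for a non-monotone strictly stable process), dominated convergence yields pointwise continuity of the integrand in $(x,y)$ a.s. The natural bound $f_{1-\tau^+_x}(y-X_{\tau^+_x})\leq (1-\tau^+_x)^{-1/\a}\|f\|_\infty$ is not integrable as $\tau^+_x\to 1$, and establishing the required uniform integrability near this endpoint will be the main obstacle.

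I would overcome this by an $\varepsilon$-truncation of the set $\{\tau^+_x\in[1-\varepsilon,1)\}$ combined with tail bounds on $\p(\tau^+_x>1-\varepsilon)$ coming from the self-similar scaling $\tau^+_x\eqd x^\a\tau^+_1$. Alternating between \eqref{eq:F1} and \eqref{eq:F2} lets one trade the spectrally negative difficulty against the spectrally positive one, so that in each parameter regime at least one formula provides good control. Once joint continuity of $(x,y)\mapsto\overline F(x,y)f(y)$ is established, continuity of $F$ follows since $f$ is continuous and strictly positive; matching $F$ with this candidate on the dense set of continuity points of the conditional distribution closes the argument.
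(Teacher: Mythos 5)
Your derivation of the two integral representations matches the paper's: the strong Markov property at $\tau^+_x$ gives \eqref{eq:F1}, and time reversal followed by the same argument at $\tau^-_{y-x}$ gives \eqref{eq:F2}. The statement $F(x,y)=0$ for $x\leq y_+$ and the idea of establishing joint continuity of the right-hand sides to fix a continuous version of $F$ are also in line with the paper.

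The gap is in your continuity argument, specifically in the domination step. You correctly observe that the crude bound $f_{1-\tau^+_x}(y-X_{\tau^+_x})\leq (1-\tau^+_x)^{-1/\a}\|f\|_\infty$ is useless near $\tau^+_x=1$, but your proposed fix (an $\varepsilon$-truncation of $\{\tau^+_x\in[1-\varepsilon,1)\}$ combined with a tail bound on $\p(\tau^+_x>1-\varepsilon)$) does not close it. On the truncated region the integrand still contains the unbounded factor $(1-\tau^+_x)^{-1/\a}$, and a bound on the probability of that region alone cannot control its expectation; for $\a\leq 1$ the factor $(1-t)^{-1/\a}$ is not even integrable against the (locally bounded) density of $\tau^+_x$ near $t=1$, so the cancellation with $f$ is essential and your estimate throws it away. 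The paper's key observation, which you miss, is that the argument of $f$ is bounded away from zero: if $(x',y')\to(x,y)$ with $x>y_+$, then $X_{\tau^+_{x'}}-y'>x'-y'>\epsilon>0$, and the polynomial tail $f(-z)\leq cz^{-\a-1}$ yields
\[
(1-\tau^+_{x'})^{-1/\a}f\bigl((1-\tau^+_{x'})^{-1/\a}(y'-X_{\tau^+_{x'}})\bigr)\leq c\,(1-\tau^+_{x'})^{-1/\a}\bigl((1-\tau^+_{x'})^{-1/\a}\epsilon\bigr)^{-\a-1}=c\,(1-\tau^+_{x'})\,\epsilon^{-\a-1}\leq c\,\epsilon^{-\a-1},
\]
a genuine uniform bound with no blow-up, after which dominated convergence is immediate (and in the spectrally one-sided cases the decay of $f$ is even faster). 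This is the decisive estimate and cannot be replaced by a tail bound on $\tau^+_x$.

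You also do not treat the boundary behaviour explicitly. Showing $F\to0$ as $(x,y)$ approaches $\{x=y_+\}$ from the interior needs case analysis: for $y'<0$ one works with \eqref{eq:F1}, for $x'>0$ with \eqref{eq:F2}, each using the same $\epsilon$-separation argument; the corner $x'=y'=0$ has no such separation and the paper handles it by a squeeze, restricting to $\{\tau^+_x<1/2\}$ to obtain a lower bound converging to $f(0)$ and using $f(y)$ as the matching upper bound. Without this, your ``matching on a dense set'' step does not give $F(x,y)\to 0$ at the boundary.
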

\begin{proof}
Assume for the moment that $x> y_+$. By time reversal (or from Proposition~\ref{prop:symmetry}) we get 
\begin{align} \label{eq:timerev}
\p(\overline X_1>x,X_1\in\D y)=\p(\underline X_1<y-x,X_1\in\D y).
\end{align}
Using the strong Markov property we find that
\[\iint_{t\in(0,1),z\geq x}\p(\tau^+_x\in\D t,X_{\tau^+_x}\in\D z)f_{1-t}(y-z)\]
is a version of the density of the measure on the left of \eqref{eq:timerev}. This expression coincides with~\eqref{eq:F1} according to~\eqref{eq:f_t}.
Similarly, \eqref{eq:F2} is a version of the density of the measure on the right of \eqref{eq:timerev}, and hence both expressions coincide for almost all~$y$.

Next, we show that the expressions in~\eqref{eq:F1} and~\eqref{eq:F2} are jointly continuous on $x>y_+$, and thus must coincide on this domain.
We do this for the first expression only, since the other can be treated in the same way.
By the basic properties of L{\'e}vy processes~\cite{bertoin} we see that $\tau^+_x\neq 1$ and $(\tau^+_{x},X_{\tau^+_{x}})$  is continuous on an event of probability~1. Hence we only need to show that the dominated convergence theorem applies. 
Choose an arbitrary sequence $(x',y')$ converging to $(x,y)$ with $x>y_+$. Now $X_{\tau^+_{x'}}-y'>x'-y'>\epsilon$ for some $\epsilon>0$ (further down in the sequence). 
Note that $f(-x)\leq cx^{-\a-1}$ for some $c>0$ and all $x>0$; in the spectrally positive case the decay is even faster. Hence the term under the expectation is bounded by
$c(1-\tau^+_{x'})\epsilon^{-\a-1}$ and we are done.

It is left to show that either one of~\eqref{eq:F1} and~\eqref{eq:F2} converges to $f(y')$ as $x\to x',y\to y'$ with $x<y_+$ and $x'=y'_+$ (the boundary of the domain); this would imply $F(x,y)\to 0$.
In the case $y'<0$ use~\eqref{eq:F1} and the above reasoning, while for $x'>0$ use~\eqref{eq:F2}.
It is left to analyze the case of $x'=y'=0$. Note that~\eqref{eq:F1} is lower bounded by the same expression with the indicator replaced by the indicator of $\tau_x^+<1/2$. But now the dominated convergence theorem applies and yields the limit $f(0)$. The upper bound is $f(y)$ by construction, and the limit is again~$f(0)$. The proof is thus complete.
\end{proof}

We are now ready to provide some bounds on $\overline F(x,y)$. In the one-sided cases the bounds can be considerably improved, but this is not needed in this work and so we prefer a simpler statement.
\begin{lemma}\label{lem:bounds}
There exists a constant $c>0$ such that for all $x\geq y_+$:
\begin{align*}
&\overline F(x,y)\leq c x^{-\a}(x-y)^{-\a-1}(|y|\vee 1)^{\a+1}, &\beta\in(-1,1),\\
&\overline F(x,y)\leq c \exp(-(x-y_+)), &\beta=\pm 1.
\end{align*}
\end{lemma}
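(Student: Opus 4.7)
The plan is to split on $\beta$: case~1 handles $\beta\in(-1,1)$, case~2 handles $\beta=\pm1$. For case~2, Proposition~\ref{prop:symmetry} gives $F'(x,y)=F(x-y,-y)$ for the dual process $-X$; since $(x-y)-(-y)_+=x-y_+$, this reduces the one-sided case to $\beta=-1$ alone. In all cases I would start from the representations in Lemma~\ref{lem:F} together with the uniform pointwise bound $f(u)\leq c(|u|\vee 1)^{-\alpha-1}$, which holds for every $\alpha$-stable density since the heavy side decays polynomially of order $\alpha+1$ and the light side, when present, decays even faster.

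For $\beta\in(-1,1)$ I apply~\eqref{eq:F1}. The overshoot bound $X_{\tau^+_x}\geq x$ gives $|y-X_{\tau^+_x}|\geq x-y$, so a direct estimate of the integrand yields
\[\overline F(x,y)f(y)\leq c(x-y)^{-\alpha-1}\p(\tau^+_x<1).\]
The classical estimate $\p(\tau^+_x<1)=\p(\overline X_1>x)\leq cx^{-\alpha}$ follows from self-similarity combined with the tail equivalence between $\overline X_1$ and $X_1$ (the latter has tail $\sim c_+ x^{-\alpha}$ whenever $\beta\neq -1$). Dividing by the two-sided polynomial lower bound $f(y)\geq c(|y|\vee 1)^{-\alpha-1}$, available whenever both tails are polynomial, yields the claim.

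For $\beta=-1$ the Laplace exponent $\psi(\lambda)=c_0\lambda^\alpha$ is finite on $\lambda\geq 0$, so $\e e^{-q\tau^+_x}=e^{-\Phi(q)x}$ with $\Phi=\psi^{-1}$, and Chernoff at $q=c_0$ gives $\p(\tau^+_x<1)\leq ce^{-x}$. Using~\eqref{eq:F1} and creeping, $X_{\tau^+_x}=x$, the case-1 computation becomes $\overline F(x,y)f(y)\leq c(x-y)^{-\alpha-1}e^{-x}$; dividing by $f(y)\geq c(|y|\vee 1)^{-\alpha-1}$ then produces $\overline F(x,y)\leq ce^{-x}=ce^{-(x-y_+)}$ whenever $y\leq 0$. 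The remaining range $y>0$, where $f(y)$ is super-exponentially small, cannot be handled this way; here I would turn to~\eqref{eq:F2}. First-passage below happens by a downward jump with overshoot, so $y-X_{\tau^-_{y-x}}\geq x$, and the super-exponential right-tail bound $f(u)\leq ce^{-c_1u^{\alpha/(\alpha-1)}}$ together with $(1-\tau^-_{y-x})^{-1/\alpha}\geq 1$ gives
\[f\bigl((1-\tau^-_{y-x})^{-1/\alpha}(y-X_{\tau^-_{y-x}})\bigr)\leq ce^{-c_1x^{\alpha/(\alpha-1)}}.\]
The scaling $\tau^-_{y-x}\stackrel{d}{=}(x-y)^\alpha\tau^-_{-1}$ together with boundedness of the density of $\tau^-_{-1}$ at the origin bounds $\e[(1-\tau^-_{y-x})^{-1/\alpha};\tau^-_{y-x}<1]$ by $c(x-y)^{-\alpha}$, whence $\overline F(x,y)f(y)\leq ce^{-c_1x^{\alpha/(\alpha-1)}}(x-y)^{-\alpha}$.

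To close the case $y>0$ I match this with the classical Cram\'er-type lower bound $f(y)\geq ce^{-c_1y^{\alpha/(\alpha-1)}}y^{a}$ on the light tail, with the \emph{same} exponent constant $c_1$, and use the elementary inequality $x^{\alpha/(\alpha-1)}-y^{\alpha/(\alpha-1)}\geq\tfrac{\alpha}{\alpha-1}y^{1/(\alpha-1)}(x-y)$ to extract a linear rate of decay in $x-y$ as soon as $y$ is bounded away from $0$; for $y$ in a bounded neighbourhood of $0$ the super-exponential factor $e^{-c_1x^{\alpha/(\alpha-1)}}$ already dominates $e^{-(x-y)}$, and continuity and positivity of $f$ give a constant lower bound there. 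The principal obstacle is precisely this last matching step: the desired bound has exponent $1$, so the Cram\'er constants in the upper and lower bounds on the light tail of $f$ must coincide, which is a genuine statement about the sharp asymptotics of one-sided stable densities and does not follow from the crude inequalities used earlier in the proof.
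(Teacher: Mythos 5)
Your treatment of $\beta\in(-1,1)$ and of the case $\beta=-1$, $y\leq 0$ is essentially identical to the paper's. The reduction of $\beta=1$ to $\beta=-1$ via Proposition~\ref{prop:symmetry} is fine (the paper just says ``similar analysis''). The gap you flag at the end of the case $\beta=-1$, $y>0$ is real, and it is created by the specific route you chose: bounding the numerator $f\bigl((1-\tau^-_{y-x})^{-1/\a}(y-X_{\tau^-_{y-x}})\bigr)$ by $ce^{-c_1 x^{v}}$ and then separately lower-bounding $f(y)$ by $ce^{-c_1'y^{v}}$ forces you to know that $c_1=c_1'$, i.e.\ to use the sharp constant in the super-exponential tail. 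The paper sidesteps this entirely. Starting from the exact asymptotics $f(x)\sim a x^b\exp(-ux^v)$ with $v>1$ (Sato, Eq.~(14.37) --- this is the ingredient you are missing a citation for), one proves the ``shift inequality'' $f(Ax+B)\leq cA^{-1}f(x)$ for $A\geq 1$, $B\geq 0$, $x\geq 1$. Applied inside \eqref{eq:F2} with $A=(1-\tau^-_{y-x})^{-1/\a}\geq 1$ and $(y-X_{\tau^-_{y-x}})\geq x$, this produces the clean bound $\overline F(x,y)\leq c\,f(x)/f(y)$ with $f$ kept symbolic, so the constants in the exponent cancel automatically rather than having to be matched. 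From there $f(x)/f(y)\leq c\exp(-(x-y))$ for $x>y>0$ follows from the same asymptotics without any delicacy: $g(x):=\log f(x)+x$ is continuous, bounded above, and eventually strictly decreasing because $f'/f\to-\infty$; hence $\sup_{x>y>0}(g(x)-g(y))<\infty$. So the fix for your gap is not to sharpen the Cram\'er bound yourself but to invoke the known density asymptotics and package them as the shift inequality. One further small point: your claimed bound $\e[(1-\tau^-_{y-x})^{-1/\a};\tau^-_{y-x}<1]\leq c(x-y)^{-\a}$ rests on boundedness of the density of $\tau^-_{-1}$, which you do not justify; fortunately the extra polynomial factor is unnecessary --- for the target bound $c\exp(-(x-y_+))$ it suffices to observe that this expectation is bounded uniformly, which for $\a>1$ follows from integrability of $(1-t)^{-1/\a}$ on $(0,1)$ together with boundedness of the density of $\tau^-_{y-x}$ near $1$.
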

\begin{proof}
Suppose that $\beta\in(-1,1)$.  We know that $f(x)<c|x|^{-\a-1}$. According to~\eqref{eq:F1} we then have
\[\overline F(x,y)f(y)\leq c \e[(X_{\tau^+_x}-y)^{-\a-1}(1-\tau_x^+);\tau_x^+<1]\leq c(x-y)^{-\a-1}\p(\overline X_1>x).\]
It is left to recall that $\p(\overline X_1>x)\sim c x^{-\a}$ as $x\to \infty$ when $\beta\neq -1$~\cite{bingham,doney_savov}.

Assume that $\beta=-1$. In this case $f(x)\sim ax^b\exp(-ux^v)$ with $a,u>0$ and $v>1$ as $x\to\infty$, see~\cite[Eq. (14.37)]{sato}.
Furthermore, the asymptotics of $\p(\overline X_1>x)$ has a similar form~\cite[Prop.\ 3b]{bingham}.
Observe that $f(Ax+B)<cA^{-1}f(x)$ for  all $A\geq 1,B\geq 0,x\geq 1$.
Hence, from~\eqref{eq:F2} we get the bound
\[\overline F(x,y)\leq c f(x)/f(y),\] which for $y>0$ leads to the claimed bound~$c\exp(-(x-y))$.
For $y\leq 0$ we find from~\eqref{eq:F1} that
\[\overline F(x,y)\leq c (x-y)^{-\a-1}\exp(-x)/f(y),\]
which readily implies the bound $c\exp(-x)$. 
Similar analysis yields the bound in the case $\beta=1$.
\end{proof}
It is noted that we may also derive a bound
\[\overline F(x,y)\leq c x^{-\a-1}(x-y)^{-\a}(|y|\vee 1)^{\a+1}\]
for $\beta\in(-1,1)$ by using~\eqref{eq:F2} instead of~\eqref{eq:F1}. This bound is better when $y>0$ and worse when $y<0$. For our purpose any of these bounds is sufficient.

Finally, we derive a semi-explicit expression of $F(x,y)$ in the one-sided case. This expression is in terms of the density~$f$.
\begin{proposition}\label{prop:F}
	In the spectrally one-sided cases we have for all $x>y_+$:
	\begin{align*}
	&\beta=-1:\\&\quad\overline F(x,y)=\frac{x}{f(y)}\int_0^1(1-t)^{-1/\a}t^{-1/\a-1}f(xt^{-1/\a})f(y-x)(1-t)^{-1/\a})\D t,\\
	&\beta=1:\\&\quad\overline F(x,y)=\frac{x-y}{f(y)}\int_0^1(1-t)^{-1/\a}t^{-1/\a-1}f(x(1-t)^{-1/\a})f((y-x)t^{-1/\a})\D t.
	\end{align*}
\end{proposition}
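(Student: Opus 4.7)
The plan is to start from the two integral representations of $\overline F(x,y) f(y)$ provided by Lemma~\ref{lem:F}, exploit the one-sidedness of $X$ to remove the overshoot, and then substitute the explicit density of the relevant first passage time via Kendall's identity together with self-similarity.

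Consider first the spectrally negative case $\beta=-1$. Since $X$ has no positive jumps, on the event $\{\tau^+_x<\infty\}$ the level $x>0$ is necessarily hit continuously, so $X_{\tau^+_x}=x$ almost surely. Applying this to the representation~\eqref{eq:F1} collapses the random overshoot and leaves
\[
\overline F(x,y)f(y)=\e\left[(1-\tau^+_x)^{-1/\a} f\bigl((1-\tau^+_x)^{-1/\a}(y-x)\bigr);\tau^+_x<1\right].
\]
Now I would invoke Kendall's identity, which for a spectrally negative L\'evy process whose one-dimensional distributions admit a density $f_s$ gives the explicit density $\p(\tau^+_x\in\D s)/\D s=(x/s)f_s(x)$ for $s>0$. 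Combined with self-similarity $f_s(x)=s^{-1/\a}f(xs^{-1/\a})$ from~\eqref{eq:f_t}, inserting this density into the displayed expectation and renaming the integration variable yields the claimed formula.

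For the spectrally positive case $\beta=1$, I would argue symmetrically using representation~\eqref{eq:F2}: since $-X$ is spectrally negative (and also strictly $\a$-stable and $1/\a$-self-similar), the process $X$ has no negative jumps and so $X_{\tau^-_{y-x}}=y-x$ a.s.\ on $\{\tau^-_{y-x}<\infty\}$, which removes the overshoot in~\eqref{eq:F2}. Kendall's identity applied to $-X$ gives the density of $\tau^-_{y-x}$ (note $y-x<0$ since $x>y_+$) as $s\mapsto \frac{x-y}{s}s^{-1/\a}f((y-x)s^{-1/\a})$, and substitution produces the stated formula.

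The main obstacle I foresee is purely bookkeeping: correctly quoting Kendall's identity (which requires that $X_t$ has a density, guaranteed here by continuity of $f$) and handling the several self-similarity rescalings without sign errors, especially in the spectrally positive case where one passes through the dual process. Once these ingredients are in place, the computation is essentially a substitution. A minor additional check is that the integrand is integrable on $(0,1)$ so that the use of Fubini/Tonelli (to identify the expectation with the integral against the density) is legitimate; this follows from the boundedness of $f$ and the tail estimates already established in the proof of Lemma~\ref{lem:F}.
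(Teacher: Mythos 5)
Your proof is correct and arrives at the same intermediate formula for the first-passage-time density, but you derive it from a different classical input. The paper obtains $\p(\tau_x^+\in\D t)=xt^{-1/\a-1}f(xt^{-1/\a})\D t$ by combining Michna's formula $\p(\overline X_1\in\D x)=\a f(x)\D x$ for the supremum of a spectrally negative stable process with the duality $\p(\tau^+_x<t)=\p(\overline X_t>x)$ and self-similarity, then differentiating in $t$. You instead invoke Kendall's identity $\p(\tau^+_x\in\D s)=\tfrac{x}{s}f_s(x)\D s$ directly and then apply self-similarity; both give the identical density, so the subsequent substitution into~\eqref{eq:F1} is the same computation. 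For $\beta=1$ the paper deduces the result from the $\beta=-1$ case via the duality $\overline F'(x,y)=\overline F(x-y,-y)$ of Proposition~\ref{prop:symmetry}, whereas you work directly with the second representation~\eqref{eq:F2} and apply Kendall's identity to $-X$; again the resulting calculation coincides, and your route is arguably a touch more self-contained since it does not route through Proposition~\ref{prop:symmetry}. Kendall's identity is a more general fluctuation-theoretic fact (valid for any spectrally negative L\'evy process), so your argument exposes more clearly where one-sidedness is actually used, at the modest cost of quoting a slightly heavier classical result than Michna's explicit supremum law.
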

\begin{proof}
	It is known that $\p(\overline X_1\in \D x)=\a f(x)$ for $x>0$, when $X$ is spectrally negative, see~\cite{Michna}.
	Moreover,
	\[\p(\tau^+_x<t)=\p(\overline X_t>x)=\p(\overline X_1>xt^{-1/\a})\]
	yielding that $\p(\tau_x^+\in\D t)=xt^{-1/\a-1}f(xt^{-1/\a})\D t$. Plugging this into Lemma~\ref{lem:F} yields the result.
	Finally,~\eqref{eq:F2} follows from $\overline F'(x,y)=\overline F(x-y,-y)$, see Proposition~\ref{prop:symmetry}.
\end{proof}

\begin{remark}\label{rem:finiteness}\rm
Note that $\e(\overline X_1|X_1=y)<\infty$ for all $y\in\R$, even in the cases $\a\in(0,1]$ where $\e \overline X_1=\infty$. This follows from Lemma~\ref{lem:bounds} showing that for  fixed $y$ we have a bound
$\overline F(x,y)\leq c x^{-2\a-1}$. Thus conditional moments of order up to $1+2\a$ exist. In the spectrally-positive case we even have $\e(\exp(\lambda \overline X_1)|X_1<b)<\infty$ for any $b<\infty,\lambda>0$ (Lemma~\ref{lem:bounds} gives only $\lambda<1$ though).
\end{remark}

%======================================== PROOF =================

\subsection{Proof of Theorem~\ref{thm:max}}
In the following we frequently use the inequality
\begin{equation}\label{eq:ineq}|\prod_{j\in \mathbb Z} a_j - \prod_{j\in \mathbb Z} b_j |\leq 
\sum_{j\in \mathbb Z} |a_j-b_j| \text{ when }a_j,b_j \in (0,1). \end{equation}

%Weak convergence of $V^\n = n^{-1/\a}(\overline X_1-m)$ is proven in~\cite{asmussen_glynn_pitman1995} for the case of a Brownian motion and in~\cite{ivanovs_zooming} for a general L\'evy process. 

Let $I^\n=\lceil\tau n\rceil$ be the index of the first observation to the right of the supremum time, and put 
\[u_i^\n=\begin{cases}n^{1/\a}(\overline X_1-X_{(i+I^\n)/n}),&i+I^\n\in[0,n]\\ \infty,&\text{otherwise}.\end{cases}\] 
In other words, $u_i^\n$ are the rescaled distances from the supremum to the observations indexed with respect to the time of supremum.
Now we can represent the quantities appearing in~\eqref{eq:Hconv} as follows:
\begin{align*}
V^\n &:=n^{1/\a}(\overline X_1-M_n)=\min_{i\in \mathbb{Z}} u_i^\n,\\
H^\n(x)&:=H_n(xn^{-1/\a})= \prod_{i\in \mathbb{Z}}F(x+u_i^\n-V^\n,u_i^\n-u_{i+1}^\n),
\end{align*}
where by convention $F=1$ if either of $u^\n_i,u_{i+1}^\n$ is infinite.
According to~\cite{ivanovs_zooming} (or~\cite{asmussen_glynn_pitman1995} in the case of Brownian motion) we have the following weak convergence for every $k>0$:
\begin{equation}\label{eq:zooming_conv}\left((u_i^\n)_{|i|\leq k},V^\n\right)\,\stab\,\left((\xi_{i+U})_{|i|\leq k},\min_{i\in\mathbb Z}\xi_{i+U}\right).\end{equation}
Intuitively, this limit can be understood as arising from~\eqref{eq:zooming} together with the fact that $\{n\tau\}$ converges to an independent uniform
on $(0,1)$.
This explains (of course, only intuitively) the form of the result in Theorem~\ref{thm:max}.

\subsubsection{Convergence of the truncated versions}
Let $H_k^\n$ be the same as $H^\n$, but with the product running over $|i|\leq k$:
\[H^\n_k(x)=\prod_{|i|\leq k}F(x+u_i^\n-V^\n,u_i^\n-u_{i+1}^\n),\] where again $F=1$ when the index is out of range.
We also define the analogous object formed from the limiting quantities:
\[H^{(\infty)}_k(x)=\prod_{|i|\leq k}F\left(x+\xi_{j+U}-V,\xi_{j+U}-\xi_{j+1+U}\right).\]
Note that $H^\n_k(x),H^{(\infty)}_k(x)$ are continuous and strictly increasing in $x\geq 0$ which is inherited from~$F(x,y)$.
Furthermore, $H^\n_k(\infty)=H^{(\infty)}_k(\infty)=1$, whereas their value at 0 is not necessarily~0.
In the following the inverse of an increasing function $f$ is defined as usual: $f^{-1}(q)=\inf\{s:f(s)\geq q\}$.
\begin{lemma}\label{lem:conv_trunc}
For any $k\in\mathbb N$ as $n\to\infty$ we have \[(V^\n,H^\n_k(x)_{x\geq 0})\stab (V,H^{(\infty)}_k(x)_{x\geq 0})\] 
with respect to  the uniform topology. Moreover,
\begin{align*}
\left(V^\n,\int_0^\infty (1-H^\n_k(x))\D x\right)\stab \left(V,\int_0^\infty (1-H^{(\infty)}_k(x))\D x\right),
%&\left(V^\n,{H^\n_k}^{-1}(q)\right)\convd \left(V,{H^{(\infty)}_k}^{-1}(q)\right)
\end{align*}
%for any $q\in(0,1)$, and 
where the limit variables are finite almost surely. 
\end{lemma}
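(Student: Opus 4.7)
The plan is to pass the jointly stably convergent vector from~\eqref{eq:zooming_conv} through the continuous mapping theorem for stable convergence, using the tail bounds of Lemma~\ref{lem:bounds} to control the large-$x$ behaviour of $H_k^{(n)}$ and its integral.

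For the first claim, I would introduce the functional
\[
\Phi_k\bigl((u_i)_{|i|\leq k+1},v\bigr)(x):=\prod_{|i|\leq k} F\bigl(x+u_i-v,\,u_i-u_{i+1}\bigr),\qquad x\geq 0,
\]
with the convention $F\equiv 1$ whenever an argument is $\infty$. Joint continuity of $F$ (Lemma~\ref{lem:F}) delivers pointwise continuity of $\Phi_k$ at each $x$, and the product is monotone increasing in $x$, continuous, and tends to $1$ as $x\to\infty$ (by Lemma~\ref{lem:bounds}). A monotone variant of P\'olya's theorem then promotes pointwise convergence of $H_k^{(n)}$ to $H_k^{(\infty)}$ to uniform convergence on $[0,\infty)$, so $\Phi_k$ is continuous into the space of bounded continuous functions equipped with the uniform topology. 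Applying continuous mapping to~\eqref{eq:zooming_conv} (noting that for $n$ large enough all $u_i^{(n)}$ with $|i|\leq k+1$ are finite, since $I^{(n)}/n\to\tau_1\in(0,1)$ a.s.) produces the joint stable convergence of $(V^{(n)},H_k^{(n)})$.

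For the integral claim and for finiteness, I would combine~\eqref{eq:ineq} (in the form $1-\prod a_j\leq\sum(1-a_j)$) with Lemma~\ref{lem:bounds} to estimate, for $x\geq 1$,
\[
1-H_k^{(\infty)}(x)\leq\sum_{|i|\leq k}\overline F\bigl(x+\xi_{i+U}-V,\,\xi_{i+U}-\xi_{i+1+U}\bigr)\leq C\,x^{-2\alpha-1},
\]
where $C<\infty$ is a random constant depending only on finitely many of the a.s.\ finite $\xi_{i+U}$'s and on $V\geq 0$ (with exponential decay in the spectrally one-sided cases). Since $2\alpha+1>1$, the integral is a.s.\ finite. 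To obtain convergence of integrals I would pass via Skorokhod representation to a.s.\ convergence of the $u_i^{(n)}\to\xi_{i+U}$ for $|i|\leq k+1$, which implies that the same tail bound applies to $H_k^{(n)}$ with a slightly enlarged but finite constant uniform in $n$ on each realisation. Dominated convergence then gives a.s.\ convergence of the integrals, and joint stable convergence is obtained via continuous mapping applied to the integral functional, which is continuous at the limit point thanks to this uniform domination.

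The main obstacle is producing the integrable tail bound uniformly in $n$; this is resolved by the a.s.\ convergence, and hence boundedness, of the finitely many $u_i^{(n)}$'s provided by Skorokhod representation applied to~\eqref{eq:zooming_conv}. Everything else is routine once joint continuity of $F$ (Lemma~\ref{lem:F}) and the polynomial/exponential tail estimates (Lemma~\ref{lem:bounds}) are in hand.
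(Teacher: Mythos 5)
Your proposal is correct and follows essentially the same route as the paper: reduce via the continuous mapping theorem for stable convergence to continuity of the finite-product map, establish uniform (in $x$) convergence through P\'olya's theorem, and control the integral using the tail bound from Lemma~\ref{lem:bounds} via dominated convergence. The only cosmetic difference is that you invoke Skorokhod representation before the integral step, whereas the paper simply verifies deterministic continuity of the map $(a^{(n)},b^{(n)})\mapsto\int_0^\infty(1-\prod_{|i|\leq k}F(x+a^{(n)}_i,b^{(n)}_i))\,\D x$ directly (using~\eqref{eq:ineq} to reduce to single factors and then DCT); both are equivalent and your version is slightly longer but equally valid.
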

\begin{proof}
In view of~\eqref{eq:zooming_conv} we only need to establish the continuity of the respective maps.
Consider $(2k+1)$-dimensional vectors $a^\n$ and~$b^\n$ converging to some vectors $a$ and $b$, respectively, where the entries of $a^\n$ and~$a$ are non-negative and the entries of $a,b$ are finite.
Observe using~\eqref{eq:ineq} that
\begin{align*}&\sup_{x\geq 0}\left|\prod_{|i|\leq k}F\left(x+a^\n_i,b^\n_i\right)-\prod_{|i|\leq k}F\left(x+a_i,b_i\right)\right|\\
&\leq\sum_{|i|\leq k} \sup_{x\geq 0}\left|F\left(x+a^\n_i,b^\n_i\right)-F\left(x+a_i,b_i\right)\right|\to 0,\end{align*}
where convergence of $F$ is uniform in $x\geq 0$ since the limit function is continuous and non-decreasing in $x\geq 0$, and is upper bounded (Polya's theorem).
Thus, the first statement is now proven.

Concerning the second statement, we find that
\begin{align*}&\left|\int_0^\infty (1-\prod_{|i|\leq k}F(x+a^\n_i,b^\n_i))\D x-\int_0^\infty (1-\prod_{|i|\leq k}F(x+a_i,b_i))\D x\right|\\
&\leq \sum_{|i|\leq k}\int_0^\infty\left|\overline F(x+a^\n_i,b^\n_i)-\overline F(x+a_i,b_i)\right|\D x
\end{align*}
and it is left to show that each summand converges to~0, i.e.\  that the dominated convergence theorem applies.
According to Lemma~\ref{lem:bounds} both $\overline F(x+a^\n_i,b^\n_i)$ and $\overline F(x+a_i,b_i)$ are bounded by $c(1\wedge x^{-2\a-1})$, because of monotonicity of $\overline F$ in the first argument and the fact that $b_i^\n\to b_i<\infty$; the decay is even faster in the case of $\beta=\pm 1$ or when $X$ is a Brownian motion.
The proof of the second statement is now complete, since finiteness of the limit is shown in the same way.
%
%Finally, we note that the inverse of $\prod_{|i|\leq k}F(\cdot+a_i,b_i)$ is continuous on $(0,1)$, because the function itself is non-negative and strictly increasing for $x\geq 0$.
%According to~\cite[Prop.\ 0.1]{resnick} the respective inverses converge at all points in~$(0,1)$.
\end{proof}

\subsubsection{Uniform negligibility of truncation}\label{sec:uniform_neg}
Showing that truncation at a finite $k$ is uniformly negligible (in the sense of~\cite[Thm.\ 3.2]{billingsley}) is the crux of the proof.
Firstly, we will need the following representation-in-law of the sequences~$u_i^\n$, which builds on~\cite{bertoin_splitting} and self-similarity of $X$.
\begin{lemma}\label{lem:bertoin}
There exists a process $\tilde\xi$ having the law of $\xi$ and a sequence of random variables $\tau_n$ such that $(\tau_n)_{n>0}$ and $(n-\tau_n)_{n>0}$ are non-negative non-decreasing sequences and the following is true: Let \[\tilde u_i^\n:=\tilde \xi_{i+1-\{\tau_n\}}\]
for all $i\in[-\lceil\tau_n\rceil,n-\lceil\tau_n\rceil]$, and otherwise $\tilde u_i^\n:=\infty$. Then 
$(u_i^\n)_{i\in\mathbb Z}\eqd (\tilde u_i^\n)_{i\in\mathbb Z}$ for all $n\in\mathbb N_+$.
\end{lemma}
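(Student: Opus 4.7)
The plan is to build $\tilde\xi$ and $(\tau_n)_{n>0}$ from a simple independent product coupling. Specifically, I would take $U$ distributed as $\tau_1$ (which has a density on $(0,1)$, so $nU\notin\mathbb Z$ almost surely), $\tilde\xi$ a two-sided process with the law of $\xi$ and \emph{independent} of $U$, and set $\tau_n:=nU$. Both $(\tau_n)$ and $(n-\tau_n)=(n(1-U))$ are then nonnegative and (linearly) nondecreasing in $n$ almost surely, so the structural requirements in the statement are satisfied at once, and the index range $i\in[-\lceil\tau_n\rceil,n-\lceil\tau_n\rceil]$ precisely matches the range on which $u_i^\n$ is finite after the rescaling described below.

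Next, I would use self-similarity of $X$: for each fixed $n$, the rescaled process $Y^\n_s:=n^{1/\a}X_{s/n}$ on $[0,n]$ has the same law as $X|_{[0,n]}$, its supremum time equals $n\tau_1$ (a.s.\ non-integer), and
\[
u_i^\n \,=\, \overline{Y^\n}_n-Y^\n_{i+I^\n} \,=\, \overline{Y^\n}_n-Y^\n_{n\tau_1+(i+1-\{n\tau_1\})},\qquad i\in[-I^\n,n-I^\n],
\]
with $u_i^\n=\infty$ otherwise. What remains is to show that for every $n$ the finite-dimensional random vector obtained this way has the same joint law as $(\tilde\xi_{i+1-\{nU\}})_i$ in the analogous range.

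The main (and really only) obstacle is the following exact, non-asymptotic distributional identity: conditionally on $\tau_1=u$, the bilateral view-from-supremum process $(\overline{Y^\n}_n-Y^\n_{nu+t})_{t\in[-nu,n(1-u)]}$ is distributed exactly as the restriction $(\xi_t)_{t\in[-nu,n(1-u)]}$ of the two-sided limit process. This is the self-similar analogue of the Williams/Imhof decomposition for Brownian motion and follows from the Chaumont-type pre/post-supremum decomposition for self-similar \levy processes~\cite{chaumont}: conditional on the supremum time, the two halves of the view-from-supremum are independent and distributed as $\xi^+$ and $\xi^-$ run for the \emph{deterministic} remaining times $n(1-u)$ and $nu$, so by the very definition of the bilateral $\xi$ their concatenation coincides with the restriction of $\xi$ to the corresponding interval. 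The reason this identity holds exactly (rather than only asymptotically) is that the one-sided processes $\xi^\pm$ are self-similar Markov processes whose law up to any deterministic horizon is obtained from the law on $[0,\infty)$ by simple restriction and scaling. Once this identity is established, conditioning on $U=u$ and using independence of $\tilde\xi$ from $U$ gives $(u_i^\n)_i\eqd(\tilde\xi_{i+1-\{nU\}})_i$, and integration over $u$ (with $U\eqd\tau_1$) completes the proof.
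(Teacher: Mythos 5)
Your structural setup (the linear $\tau_n=nU$ and the reduction via self-similarity to a view-from-supremum identity) is fine, but the one step you yourself flag as ``the main (and really only) obstacle'' is where the argument breaks, and it breaks in an essential way.

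You claim that, conditionally on the supremum time $\tau_1=u$, the process $X$ on $[0,1]$ seen from its supremum is distributed exactly as the two-sided process $\xi$ restricted to $[-u,1-u]$, with $\xi$ \emph{independent} of $\tau_1$. This is false. Already in the Brownian case, Denisov's decomposition says that conditionally on $\tau_1=u$ the pre- and post-supremum pieces are independent Brownian \emph{meanders} of lengths $u$ and $1-u$, not Bessel-$3$ processes restricted to $[0,u]$ and $[0,1-u]$. These two laws differ; for instance, the terminal value of a meander of length $1$ is Rayleigh-distributed (density $x\ee^{-x^2/2}$) whereas the Bessel-$3$ process at time $1$ has the chi-$3$ density $\sqrt{2/\pi}\,x^2\ee^{-x^2/2}$, so the endpoint laws are not equal but are related by size-biasing. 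Meander $\to$ Bessel-$3$ only in the limit of infinite horizon — which is precisely why $\xi$ arises as a \emph{limit} in \eqref{eq:xi} and why the identity you want cannot hold at a fixed finite horizon with $\xi$ independent of $U$. Your appeal to self-similarity of $\xi^\pm$ as ``simple restriction and scaling'' is not a substitute: self-similarity rescales time and space for the same unconditioned law, it does not convert the finite-horizon conditioned path (meander/stable analogue) into the infinite-horizon conditioned one.

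The paper's proof avoids this by invoking Bertoin's splitting theorem~\cite{bertoin_splitting}: one constructs a single process $\tilde\xi$ by juxtaposing the excursions of the original $X$ in half-lines, and then the pair consisting of $\tilde\xi$ and the occupation times $\tau_n=\int_0^n\ind{X_t>0}\,\D t$ reproduces, in joint law, the pair consisting of the view-from-supremum and the supremum time. Crucially, $\tilde\xi$ and $\tau_n$ are both functions of the same $X$ and are therefore \emph{dependent}; the non-decreasing property of $\tau_n$ and $n-\tau_n$ then comes for free since both are increasing integrals. Your construction replaces this dependent coupling by an independent one with $\tau_n=nU$, and that independence assumption is exactly what the false conditional identity was supposed to justify. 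To fix the argument you would need to either exhibit the correct dependent coupling (which is what Bertoin's theorem does) or prove a finite-horizon identity with the appropriate conditioned (meander-type) processes rather than with $\xi$ itself — at which point you would lose the single $\tilde\xi$ serving all $n$ simultaneously, which is the whole point of the lemma.
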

\begin{proof}
By self-similarity $(n^{1/\a}X_{t/n})_{t\in[0,n]}$ has the same law as $(X_t)_{t\in[0,n]}$.
According to~\cite{bertoin_splitting}, the law of the latter process when seen from the supremum, see~\eqref{eq:xi}, coincides with a certain process 
$\tilde\xi$ killed outside of the interval $[-\tau_n,n-\tau_n]$, where $\tau_n=\int_0^n\ind{X_t> 0}\D t$.
It is noted that $\tilde \xi$ is constructed using juxtaposition of the excursions of $X$ in half-lines according to their signs, and it does not depend on~$n$.
Clearly, $\tau_n$ and $n-\tau_n=\int_0^n\ind{X_t\leq 0}\D t$ are non-decreasing sequences going to $+\infty$, and the laws of $\tilde\xi$ and $\xi$ defined  by~\eqref{eq:xi} coincide.
It is now left to recall the definition of $u_i^\n$.
\end{proof}

We will also need asymptotic bounds on the process~$\xi$, which can be read of~\cite[Cor.\ 3.3]{fourati} or~\cite{chaumont_pardo}, see also~\cite{motoo_59} for the Brownian case.
\begin{lemma}\label{lem:LIL}
For any $p_-,p_+>0$ such that $p_-<1/\a<p_+$ it holds that 
\[\lim_{t\to\infty}\xi_t/t^{p_-}=\infty, \qquad \lim_{t\to\infty}\xi_t/t^{p_+}=0\qquad\text{ almost surely.}\]
In particular, the probability of the event 
\[E_{T,p_\pm}:=\{\forall t\geq T: \xi_t\in[t^{p_-},t^{p_+}]\}\]
tends to $1$ as $T\to\infty$.
\end{lemma}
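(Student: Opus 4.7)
The plan is to reduce both almost-sure claims to the single asymptotic $\log\xi_t/\log t \to 1/\alpha$, and then obtain the probability statement as an immediate consequence of monotone continuity.

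For the first step I would exploit the fact that, when started from a strictly positive level, the ``positive half'' of $\xi$ is a $1/\alpha$-self-similar positive Markov process -- namely the L\'evy process conditioned to stay positive introduced in~\eqref{eq:xi} (cf.~\cite{chaumont,caballero2006conditioned}). Lamperti's representation then writes $\xi_t = \exp(Y_{\tau(t)})$ for a (possibly killed) L\'evy process $Y$ and the usual exponential-functional time change $\tau$. Checking that $\e[|Y_1|]<\infty$ -- which is verified for our class in the references below -- the strong law of large numbers for $Y$ combined with the standard asymptotic for $\tau$ gives
\[
\frac{\log\xi_t}{\log t} \longrightarrow \frac{1}{\alpha} \qquad \text{a.s.}
\]
Both $\xi_t/t^{p_-}\to\infty$ and $\xi_t/t^{p_+}\to 0$ then follow at once for any $p_-<1/\alpha<p_+$. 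In fact the two asymptotics in the lemma are established in exactly our setting by~\cite[Cor.\ 3.3]{fourati} and~\cite{chaumont_pardo}, and in the Brownian case by Motoo~\cite{motoo_59}; in a formal write-up I would simply invoke these sources after a short verification that the $\xi$ of~\eqref{eq:xi} fits their hypotheses via the construction of~\cite{chaumont}.

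For the probability statement, set
\[
T_\star(\omega):=\inf\bigl\{T\geq 1:\xi_t(\omega)\in[t^{p_-},t^{p_+}]\text{ for all }t\geq T\bigr\}.
\]
The almost-sure limits give $T_\star<\infty$ $\p$-a.s., and by definition $E_{T,p_\pm}=\{T_\star\leq T\}$. Monotone continuity of $\p$ along the nested increasing family $\{T_\star\leq T\}_{T\geq 1}$ then yields $\p(E_{T,p_\pm})\to 1$ as $T\to\infty$. The genuine obstacle in the whole argument is thus the first step -- the LIL-type asymptotic for $\xi$ -- which I would import from the cited literature rather than re-derive.
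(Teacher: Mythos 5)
Your proposal matches the paper, which supplies no separate proof: it simply attributes the almost-sure limits to~\cite[Cor.\ 3.3]{fourati}, \cite{chaumont_pardo}, and~\cite{motoo_59} (Brownian case), and the probability statement is the monotone-continuity consequence you describe. Your Lamperti/SLLN sketch correctly explains the mechanism behind those references, modulo the entrance-from-$0$ technicality (the forward part of $\xi$ starts at $0$, not at a strictly positive level), which is easily handled by conditioning on the value at a fixed positive time before invoking the Lamperti representation.
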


The following result establishes convergence of certain series, which is only needed for the case of a stable process with two-sided jumps.
\begin{lemma}\label{eq:Dbound}
Assume that $\beta\in(-1,1)$ and consider 
\[D_t=\sup_{h\in[0,1]}|\xi_{t+1+h}-\xi_{t+h}|.\]
Then there exist $p_\pm>0$ such that $p_-<1/\a<p_+$ and the following series are convergent for any $T>0$:
\begin{align}
&\a\in(1,2):&\sum_{i\geq 1} i^{-2\a p_-}\e[D_i^{\a+1};E_{T,p_\pm}]<\infty,\label{eq:series1}\\
&\a\in(0,1]:&\sum_{i\geq 1} i^{-2\a p_--p_-}\e[D_i^{\a+1};E_{T,p_\pm}]<\infty. \label{eq:series2}
\end{align} 
\end{lemma}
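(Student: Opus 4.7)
Proof Plan. The strategy is to combine a deterministic bound on $D_i$ that holds on $E_{T,p_\pm}$ with a lower-order moment estimate, and then tune $p_\pm$ close to $1/\a$ so that each series converges. On $E_{T, p_\pm}$ the pointwise constraints $\xi_s \in [s^{p_-}, s^{p_+}]$ for $s \geq T$ give
\[D_i \mathbf{1}_{E_{T, p_\pm}} \leq \sup_{s \in [i, i+2]} \xi_s - \inf_{s \in [i, i+2]} \xi_s \leq (i+2)^{p_+}, \qquad i \geq T.\]
Writing $D_i^{\a+1} = D_i^{\a-\ep} \cdot D_i^{1+\ep}$ for a small $\ep \in (0, \a)$ and using the pathwise bound for the second factor, the task reduces to a uniform-in-$i$ estimate $\e D_i^{\a-\ep} \leq C$, which then yields
\[\e[D_i^{\a+1}; E_{T, p_\pm}] \leq C \, i^{p_+(1+\ep)}.\]

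For the uniform moment bound I would use $D_i \leq 2 \sup_{s \in [0, 2]} |\xi_{i+s} - \xi_i|$ together with the Markov property at time $i$ and the $1/\a$-self-similarity of $\xi$ as a positive self-similar Markov process. Conditionally on $\xi_i = x$, $(\xi_{i+s})_{s \geq 0} \eqd (x\, \xi^{(1)}_{s/x^\a})_{s \geq 0}$ where $\xi^{(1)}$ is $\xi$ started at $1$, so
\[\e[D_i^{\a-\ep}\mid \xi_i = x] \leq 2^{\a-\ep} x^{\a-\ep} \, \e Y(2/x^\a)^{\a-\ep}\]
with $Y(a) := \sup_{u \leq a} |\xi^{(1)}_u - 1|$. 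The crux is a two-regime estimate: $\e Y(a)^{\a-\ep} \leq Ca$ for small $a$ (governed by a single large jump of the Lamperti L\'evy process $\eta$ of $\xi^{(1)}$) and $\e Y(a)^{\a-\ep} \leq C a^{1-\ep/\a}$ for large $a$ (from self-similarity of $\xi^{(1)}$). Together these imply $\e[D_i^{\a-\ep}\mid \xi_i = x] \leq C(1 + x^{-\ep})$, and finiteness of negative moments of $\xi_i$ (via the entrance-law asymptotics of $\xi$ at zero) yields $\e D_i^{\a-\ep} \leq C$. The small-time bound on $Y$ proceeds via the Lamperti representation $\xi^{(1)}_u = \exp(\eta_{\tau(u)})$, using that for stable processes conditioned to stay positive the characteristics of $\eta$ are explicit~\cite{caballero2006conditioned}, that $\nu_\eta$ has finite $(\a-\ep)$-th moment on both tails since $\a-\ep < \a$, and that $|\exp(y)-1| \leq C|y|$ on bounded sets with $\tau(u) \approx u$ near zero.

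Choosing $p_\pm = 1/\a \pm \delta$ with $\delta > 0$ small, the exponent in the general term of~\eqref{eq:series1} becomes $p_+(1+\ep) - 2\a p_- = 1/\a - 2 + \Oh(\delta + \ep)$, which is strictly below $-1$ for $\delta, \ep$ sufficiently small because $1/\a < 1$ when $\a \in (1,2)$. Similarly, the exponent for~\eqref{eq:series2} is $p_+(1+\ep) - (2\a + 1) p_- = -2 + \Oh(\delta + \ep) < -1$ for any $\a > 0$. Both series converge. The main obstacle is the small-time moment bound on $Y(a)$: although conceptually a one-big-jump computation on $\eta$, it requires the explicit Lamperti description and careful control of the two-sided tails arising for $\b \in (-1,1)$, together with the time change $\tau$ near zero.
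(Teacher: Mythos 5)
There is a genuine gap, and it is structural. Your Hölder split $D_i^{\a+1}=D_i^{\a-\ep}D_i^{1+\ep}$ reduces everything to the uniform moment bound $\e D_i^{\a-\ep}\leq C$, but for $\a\in(1,2)$ that bound is false for the values of $\ep$ you need. The issue is the tail of the conditioned process: for $t>0$ the process $\xi$ is (minus of) a strictly stable process conditioned to stay positive via Doob $h$-transform with $h(x)=x^{\a\rho^-}$ ($\rho^-$ the negativity parameter), so its jump kernel at level $x$ is $\nu^\uparrow(x,\D y)=\tfrac{h(x+y)}{h(x)}\nu(\D y)$. For large $y>0$ this has density $\sim c\,x^{-\a\rho^-}y^{\a\rho^-}y^{-\a-1}$, hence an upper tail of order $v^{-\a\hat\rho}$ with $\hat\rho=1-\rho$, not $v^{-\a}$. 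Consequently $\e[\sup_{u\leq 2}|\xi^{(1)}_u-1|^q]<\infty$ only for $q<\a\hat\rho$. Since for $\b\in(-1,1)$ and $\a\in(1,2)$ one has $\a\hat\rho\in(\a-1,1)$, your uniform bound requires $\a-\ep<\a\hat\rho$, i.e.\ $\ep>\a\rho>\a-1$. On the other hand, your exponent computation for \eqref{eq:series1} needs $p_+(1+\ep)-2\a p_-<-1$, which as $p_\pm\to 1/\a$ forces $\ep<\a-1$. These two constraints are incompatible for every $\b\in(-1,1)$, so no choice of $\ep$ (indeed, no choice of Hölder split whatsoever, since the resulting window $1<q<\a\hat\rho$ is always empty) makes the argument close for $\a\in(1,2)$.

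The specific misstep is the claim that ``$\nu_\eta$ has finite $(\a-\ep)$-th moment on both tails since $\a-\ep<\a$.'' The Lamperti L\'evy measure indeed has exponential tails, so it has all polynomial moments --- but that is the wrong object. What governs $Y(a)$ is $\ee^{\eta}$, and exponentiating an exponential tail of rate $\a\hat\rho$ produces a polynomial tail of index $\a\hat\rho$, hence only moments of order below $\a\hat\rho<1$. Your approach also discards exactly the half of $E_{T,p_\pm}$ that the paper's proof is built around: the lower bound $\xi_i>i^{p_-}$. The paper's Proposition~\ref{prop:fluct_bound} shows that when the starting level exceeds the threshold ($x>v$) the fluctuation tail of $\p^\uparrow_x$ decays like $v^{-\a}$ rather than $v^{-\a\hat\rho}$ --- the $h$-weight $y^{\a\rho^-}/x^{\a\rho^-}$ no longer blows up when $x$ is large. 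The bootstrap through the chain $\delta_0<\cdots<\delta_k$ is then precisely the device needed to exploit this bound on the event $\{\xi_i>i^{p_-}\}$ across the full range $[0,i^{p_+}]$ of possible values of $D_i$, since the $x>v$ restriction means a single application of the tail bound cannot cover that whole interval at once. Without an analogue of this step, the plan cannot recover~\eqref{eq:series1}. (For $\a\in(0,1]$ the arithmetic window $\a(1-\hat\rho)<\ep<\a$ is nonempty, so your scheme is not obviously doomed there, but as written it silently treats both cases identically and the small-$a$ bound $\e Y(a)^{\a-\ep}\leq Ca$ is in any case off by a factor $a^{\ep/\a}$; the correct scaling, obtained from $Y(a)\asymp a^{1/\a}$ near zero, is $a^{1-\ep/\a}$.)
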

\begin{proof}
Assume that $\a\in(1,2)$. Let us show that there exists a natural number $k$ and 
\[0=\delta_0<\delta_1<\cdots<\delta_{k-1}<1<\delta_k\] such that $\delta_{j}(\a+1)/\a-\delta_{j-1}<1$ for all $j=1,\ldots, k$. 
The $j$th inequality reads as $\delta_j<\psi(\delta_{j-1})$ with $\psi(u)=(1+u)\a/(1+\a)$ being a continuous function such that $\psi(u)>u$ iff $u<\a$.
Note that it is sufficient to pick the smallest $k$ such that
$\psi^{(k)}(0)>1$, where the latter denotes $k$th iterate. To see that such $k$ exists, simply observe that
 $\psi^{(k)}(0)$ converges to $\a>1$ as $k\to\infty$.

Choose $p_\pm$ close enough to $1/\a$ so that $\delta_{k-1}<\a p_-<1<\a p_+<\delta_k$ and 
\begin{equation}\label{eq:p_choice}\delta_{j}(\a+1)/\a-\delta_{j-1}<2\a p_--1,\qquad \text{ for all } j=1,\ldots,k.\end{equation}
According to Lemma~\ref{prop:fluct_bound} in \S\ref{sec:cond_pos}, for any $i>T$ we have
\[\p\left(\{D_i\geq i^{\delta_{j-1}/\a}\}\cap E_{T,p_\pm}\right)\leq c i^{-\delta_{j-1}},\]
because $\xi_i>i^{p_-}>i^{\delta_{j-1}/\a}$ on the respective event.
Now for any $j=1,\ldots, k$ we have
\begin{align*}&\sum_i i^{-2\a p_-}\e\left[D_i^{\a+1};\{D_i\in[i^{\delta_{j-1}/\a},i^{\delta_{j}/\a})\}\cap E_{T,p_\pm}\right]\\
&\leq\sum_i i^{-2\a p_-}i^{\delta_{j}(\a+1)/\a}\p\left(\{D_i\geq i^{\delta_{j-1}/\a}\}\cap E_{T,p_\pm}\right)\\
&\leq c\sum_i i^{-2\a p_-+\delta_{j}(\a+1)/\a-\delta_{j-1}}<\infty
\end{align*}
according to~\eqref{eq:p_choice}. Summing up over $j=1,\ldots,k$ completes the proof of~\eqref{eq:series1}, because on the event $E_{T,p_\pm}$ we have
$D_i<(i+2)^{p_+}<i^{\delta_k/\a}$ for $i>T$ large enough. Moreover, the first interval $[1,i^{\delta_1/\a})$ can be replaced by $[0,i^{\delta_1/\a})$ without any change required.
 
 Next, assume that $\a\in(0,1]$ and choose $\delta_1<\a p_-<1<\a p_+<\delta_2$. 
Similarly, to the above calculation we find that it is sufficient to  additionally guarantee that
\[-2\a p_--p_-+\delta_j(\a+1)/\a-\delta_{j-1}<-1, \qquad j=1,2.\]
This is always possible when $\delta_2<1+\delta_1\a/(\a+1)$.
\end{proof}

We are now ready to establish that truncation is indeed uniformly negligible:
\begin{lemma}\label{lem:conv}
For any $\epsilon>0$ we have 
\begin{align}
&\lim_{k\to\infty}\sup_n \p(\|H^\n-H_k^\n\|_\infty>\epsilon)= 0,\label{eq:trunc1}\\
&\lim_{k\to\infty}\sup_n \p\left(\left|\int_0^\infty (H^\n(x)-H_k^\n(x))\D x\right|>\epsilon\right)= 0,\quad\text{ for }\a\in(1,2].\label{eq:trunc2}
\end{align}
Moreover, almost surely it holds that 
\begin{align}
&\sup_{x\geq 0}\left|1-\prod_{|j|>k}F\left(x+\xi_{j+U}-V,\xi_{j+U}-\xi_{j+1+U}\right)\right|\to 0,\label{eq:Hconv_str}\\
&\int_0^\infty (1-H_k^{(\infty)}(x))\D x\to \int_0^\infty (1-H(x))\D x<\infty,\quad \text{ for }\a\in(1,2].\nonumber
\end{align}
as $k\to \infty$.
\end{lemma}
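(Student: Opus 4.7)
The plan is to reduce both uniform-in-$n$ estimates to tail bounds on the single process $\tilde\xi\eqd\xi$. By Lemma~\ref{lem:bertoin} we couple so that $u_i^\n=\tilde u_i^\n:=\tilde\xi_{i+h_n}$ with $h_n:=1-\{\tau_n\}\in[0,1]$ whenever $i$ lies in the admissible window, and $\tilde u_i^\n=\infty$ outside (so the corresponding factors in the products equal~$1$). Then $|\tilde u_i^\n-\tilde u_{i+1}^\n|\leq D_i$ with $D_i$ as in Lemma~\ref{eq:Dbound}, and $V^\n=\min_i\tilde u_i^\n\leq\tilde\xi_{h_n}$. Fix $p_\pm$ as supplied by Lemma~\ref{eq:Dbound}, and for $T,N>0$ let $A_{T,N}$ be the event that $|\tilde\xi_t|\in[|t|^{p_-},|t|^{p_+}]$ for all $|t|\geq T$ and $\sup_{|t|\leq T}|\tilde\xi_t|\leq N$; by Lemma~\ref{lem:LIL} applied symmetrically to the two independent Feller halves of $\tilde\xi$ and by continuity, $\p(A_{T,N}^c)\to 0$ as $T,N\to\infty$.

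For~\eqref{eq:trunc1}--\eqref{eq:trunc2}, inequality~\eqref{eq:ineq} gives
\[\|H^\n-H_k^\n\|_\infty\leq\sum_{|i|>k}\sup_{x\geq 0}\overline F\bigl(x+\tilde u_i^\n-V^\n,\,\tilde u_i^\n-\tilde u_{i+1}^\n\bigr),\]
and the analogous bound with $\sup_x$ replaced by $\int_0^\infty\cdot\,\D x$. On $A_{T,N}$ with $k$ so large that $k^{p_-}\geq 2N$ and $k\geq T+1$, whenever $\tilde u_i^\n,\tilde u_{i+1}^\n$ are finite we have $\tilde u_j^\n-V^\n\geq |j|^{p_-}/2$ for $j\in\{i,i+1\}$. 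Lemma~\ref{lem:bounds} then yields, in the case $\beta\in(-1,1)$,
\[\sup_{x\geq 0}\overline F(\cdots)\leq c\,|i|^{-(2\a+1)p_-}(D_i^{\a+1}\vee 1),\qquad\int_0^\infty\overline F(\cdots)\D x\leq c\,|i|^{-2\a p_-}(D_i^{\a+1}\vee 1),\]
where the integrated bound uses $\int_0^\infty(x+a)^{-\a}(x+b)^{-\a-1}\D x\leq c(a\wedge b)^{-2\a}$. Crucially these bounds depend only on $\tilde\xi$, not on $n$. Summing over $|i|>k$, taking expectations on $A_{T,N}$ and invoking~\eqref{eq:series2} and~\eqref{eq:series1} respectively yields quantities that vanish as $k\to\infty$. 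Combined with $\p(A_{T,N}^c)\to 0$, Markov's inequality delivers~\eqref{eq:trunc1} and~\eqref{eq:trunc2}. The one-sided cases $\beta=\pm 1$ are handled identically using the exponential bound of Lemma~\ref{lem:bounds}.

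The almost-sure tail~\eqref{eq:Hconv_str} is proved by applying the same pathwise bounds directly to $\xi$: on the analogue of $A_{T,N}$ for $\xi$, the series $\sum_{|j|>k}\overline F(x+\xi_{j+U}-V,\,\xi_{j+U}-\xi_{j+1+U})$ is dominated by a random series of finite expectation, hence convergent a.s., and its tail vanishes as $k\to\infty$ uniformly in $x\geq 0$; letting $T\to\infty$ exhausts a probability-one set. For the integral limit, $H_k^{(\infty)}(x)$ is non-increasing in $k$ (gain of factors in $(0,1)$), so $1-H_k^{(\infty)}(x)\uparrow 1-H(x)$, and monotone convergence gives $\int(1-H_k^{(\infty)})\D x\to\int(1-H)\D x$; finiteness of the limit is inherited from the integrability of the dominating series (finite expectation on each $A_{T,N}$ forces a.s.\ finiteness).

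The main obstacle will be the bookkeeping in the $n$-dependent coupling: one must verify that the lower bound $\tilde u_j^\n-V^\n\geq |j|^{p_-}/2$ holds uniformly in $n$ for all $|j|>k$, despite the admissible window $[-\lceil\tau_n\rceil,n-\lceil\tau_n\rceil]$ varying with $n$, and that Lemma~\ref{lem:LIL} extends to both ends of the two-sided $\tilde\xi$ via the independent Feller decomposition. Once these points are secured, the delicate summability produced by Lemma~\ref{eq:Dbound}---obtained through the iterative choice of exponents $\delta_j$---takes over and the remainder is routine Markov-type estimation.
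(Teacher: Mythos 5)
Your proposal is correct and follows essentially the same route as the paper's proof: it reduces via inequality~\eqref{eq:ineq} and the coupling of Lemma~\ref{lem:bertoin} to tail sums involving a single copy of $\xi$, restricts to a high-probability event where Lemma~\ref{lem:LIL} controls the growth of $\xi$, applies the $\overline F$ bounds from Lemma~\ref{lem:bounds}, and concludes by Markov's inequality together with the summability furnished by Lemma~\ref{eq:Dbound}. The only superficial differences (keeping $V^\n$ random on an event $A_{T,N}$ instead of first bounding it by a constant $v$, and using monotone rather than dominated convergence at the end) are cosmetic.
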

\begin{proof}
%We focus on proving~\eqref{eq:trunc_sup}, since convergence in probability of $H^{(\infty)}_k$ is simpler and can be established using some of the arguments below.
We start by showing~\eqref{eq:trunc1}.
Using~\eqref{eq:ineq} we find that 
 \[\|H^\n-H_k^\n\|_\infty\leq \sup_{x\geq 0}\sum_{|i|>k} \overline F(x+u_i^\n-V^\n,u_i^\n-u_{i+1}^\n),\]
 where the summand is 0 when either of $u_i^\n,u_{i+1}^\n$ is infinite.
% and similarly that
%  \[\left|\int_0^\infty (H^\n(x)-H_m^\n(x))\D x\right|\leq \sum_{|i|>m}\int_0^\infty|1-F(x+u_i^\n-V^\n,u_i^\n-u_{i+1}^\n)|\D x.\]
 By monotonicity of $F$ in the first argument, and the fact that $\p(V^\n>v)$ can be made arbitrarily small by choosing large enough~$v$ (recall that $V^\n\convd V$),
 it is sufficient to show that 
 \begin{equation}\label{eq:toshow1}
 \sup_n \p\left(\sum_{i>k} \overline F(\tilde u_i^\n-v,\tilde u_i^\n-\tilde u_{i+1}^\n)>\epsilon\right)\to 0,
 \end{equation}
 where we have replaced $u^\n_i$ by $\tilde u^\n_i$ having the same law as defined in Lemma~\ref{lem:bertoin}.
 Note also that the sum here runs over $i>k$ since the other part ($i<-k$) can be handled in the same way.
 
 Choose $p_\pm$ with $p_-<1/\a<p_+$ such that the conclusion of Lemma~\ref{eq:Dbound} is satisfied when $\a\in(0,2),\beta\in(0,1)$.
 Note that we may restrict to the event $\tilde E_{T,p_\pm}$ for a large enough $T>0$, see Lemma~\ref{lem:LIL}; that is, we have $t^{p_-}\leq \tilde \xi_t\leq t^{p_+}$ for all $t>T$.

First, assume that $\a\in(0,1)$ and $\beta\in(-1,1)$. According to Lemma~\ref{lem:bounds} we have the bound (this bound is 0 when $\tilde u_i^\n$ or $\tilde u_{i+1}^\n$ is infinite)
\begin{align*}
&\overline F(\tilde u_i^\n-v,\tilde u_i^\n-\tilde u_{i+1}^\n)\\
&\leq c(\tilde u_{i}^\n-v)^{-\a}(\tilde u_{i+1}^\n-v)^{-\a-1}(1\vee \tilde D_i)^{\a+1}\\
&\leq ci^{-p_-(2\a+1)}(1+\tilde D_i^{\a+1}),
\end{align*}
where $\tilde D_i\leq \sup_n|\tilde u_i^\n-\tilde u_{i+1}^\n|$, see the definition of $\tilde u_i^\n$ in Lemma~\ref{lem:bertoin}.
Now~\eqref{eq:toshow1} follows by Markov's inequality from
\[\sum_{i>k} i^{-p_-(2\a+1)}\e[\tilde D_i^{\a+1};\tilde E_{T,p_\pm}]<\infty,\]
see Lemma~\ref{eq:Dbound}.
In the case $\beta=\pm 1$ and $\a=2$ the above becomes
\[\sum_{i>k} \exp(-ci^{p_-})<\infty,\qquad \sum_{i>k} \exp(-ci^{2p_-})<\infty,\]
respectively, which is obviously true.

Next we show~\eqref{eq:trunc2}.
 With respect to the second statement we only need to show that
  \[\sup_n \p\left(\sum_{i>k}\int_0^\infty \overline F(x+\tilde u_i^\n-v,\tilde u_i^\n-\tilde u_{i+1}^\n)\D x>\epsilon\right)\to 0,\]
 In the case $\a\in(1,2),\beta\in(-1,1)$ the upper of Lemma~\ref{lem:F} reads
 \begin{equation*}
 \overline F(x+\tilde u_i^\n-v,\tilde u_i^\n-\tilde u_{i+1}^\n)<c(x+i^{p_-})^{-2\a-1}(1+D_i^{\a+1}),
 \end{equation*}
for $i>T$. Integrating over $x\geq 0$ we get the bound $ci^{-2\a p_-}(1+D_i^{\a+1})$ and the proof is again completed by the Markov's inequality and Lemma~\ref{eq:Dbound}.
In the case $\beta=\pm 1$ the bound is 
\[\sum_{i>k}\int_0^\infty\exp(-(x+i^{p_-}-v))\D x<\infty,\]
and a similar bound holds for $\a=2$.

Finally, similar (but simpler) arguments show that there is convergence in probability in~\eqref{eq:Hconv_str}. But the product is monotone for each~$x\geq 0$. Thus we have uniform convergence almost surely.
For $\a\in(1,2]$ we find using above arguments that the integral $\int_0^\infty (1-H(x))\D x$ is finite almost surely. Now the dominated convergence theorem applies.
\end{proof}

\begin{proof}[Proof of Theorem~\ref{thm:max}]
Let us show the stated properties of $H$. It is clear that $H(x),x\geq 0$ is non-decreasing and takes values in~$[0,1]$. Moreover, $H(0)=0$ since one of the terms in the product is~0.
Observe that~\eqref{eq:Hconv_str} implies convergence of $H_{k}^{(\infty)}$ to $H$ uniformly in $x\geq 0$ on the set of probability~1.
Thus $H$ is continuous and $H(\infty)=1$, because the same is true about $H_{k}^{(\infty)}$. 
Finally, $H$ is strictly monotone, since $H(x)>0$ for every $x>0$ which follows from positivity of $H_k^{(\infty)}$ and~\eqref{eq:Hconv_str}.

Stable convergence statements in~\eqref{eq:Hconv} and~\eqref{eq:T1} follow from Lemma~\ref{lem:conv_trunc} and Lemma~\ref{lem:conv} by means of~\cite[Thm.\ 3.2]{billingsley} extended to the setting of stable convergence.
Concerning~\eqref{eq:T2} we apply Skorokhod's representation theorem to the sequence $H^\n$ (the underlying space of continuous functions with a limit at $\infty$ is indeed separable, as it can be time-changed into the space of continuous functions on~$[0,1]$).
The inverse $H^{-1}$ is continuous and finite on $(0,1)$ and hence we have convergence of respective inverses~\cite[Prop.\ 0.1]{resnick}.
 \end{proof}
 
% TODO: Is the space of continuous cdf on $[0,\infty)$ separable?
\subsection{Related results}
Here we provide the proofs (or just the main ingredients) of the results related to Theorem~\ref{thm:max}.
\subsubsection{Linear Brownian motion} \label{secA.4.1}
\begin{proof}[Proof of Corollary~\ref{cor:BM}]
%Here we prove the statement of~\S\ref{sec:BM} concerning linear Brownian motion.
Firstly, note that the scaling $\sigma$ can be indeed taken out as in~\eqref{eq:BM1} and~\eqref{eq:BM2} . 
This is true in general, because we may always rescale the process and the corresponding observations before the analysis.
Thus we may assume that $\sigma=1$ in the following.

Now suppose that $\mu\neq 0$ and so $X$ is not self-similar. Recall that the estimators are the same as in the case~$\mu=0$.
Furthermore, according to~\cite{asmussen_glynn_pitman1995} the convergence in~\eqref{eq:zooming_conv} is still true, where the limit variables are defined in terms of the same 3-dimensional Bessel process.
The main difficulty is that Lemma~\ref{lem:bertoin} is no longer true and the proof of uniform negligibility of truncation fails.

  By Girsanov's theorem, we may introduce arbitrary drift using exponential change of measure $\D\p'/\D\p=\exp(a X_1+b)$ with appropriately chosen constants $a,b\in\R$. But then 
 \begin{align*}&\p'(\|H^\n-H_k^\n\|_\infty>\epsilon)=\e[\exp(a X_1+b);\|H^\n-H_k^\n\|_\infty>\epsilon]\\&
 \leq \exp(|a|c+b)\p(\|H^\n-H_k^\n\|_\infty>\epsilon)+\p'(|X_1|>c),\end{align*}
 where $c>0$ is arbitrary. But as $k\to \infty$ the $\limsup_n$ of this expression converges to $\p'(|X_1|>c)$, which can be made arbitrarily small.
 Thus~\eqref{eq:trunc1} holds for an arbitrary linear Brownian motion, and the same argument works for~\eqref{eq:trunc2}.
 \end{proof}

\begin{proof}[Proof of Lemma~\ref{lem:UI}]
	It is only required to show that 
	$\e[\left(\sqrt n|\overline X_1-\overline T_n^\text{\rm mean}|\right)^p]$
	is bounded for an arbitrarily large $p$ and all~$n$. 
	Furthermore, we may again restrict our attention to a driftless Brownian motion by change of measure and Cauchy-Schwarz inequality.
	The fact that $\e [\exp(\theta V^\n)]$ for any $\theta$ is bounded was established in~\cite{asmussen_glynn_pitman1995}, and so it is sufficient to show that
	\begin{align*}&\e\left[\left(\int_0^\infty(1-H_n(xn^{-1/2}))\D x\right)^p\right]\\
	&\leq \e\left[\left( \sum_i\int_0^\infty\overline F(x+u_i^\n-V^\n,u_i^\n-u_{i+1}^\n)\D x\right)^p\right]\end{align*}
	is bounded. The right-hand side is increased by pulling the sum out. Using the explicit expression for $\overline F$ we see that it is left to consider
	\begin{align*}&\sum_{i\geq 1} \e \left[\left(\int_0^\infty \exp\left(-2(x+u_i^\n\wedge u_{i+1}^\n-V^\n)^2\right)\D x\right)^p\right]\\
	&\leq c\sum_{i\geq 1}\e \left[\exp\left(-p(u_i^\n\wedge u_{i+1}^\n-V^\n)\right)\right],\end{align*}
	where we used that $\overline \Phi(4x)<c\exp(-x)$. Moreover, $V^\n$ can be dropped out, because of Cauchy-Schwarz inequality and boundedness of $\e [\exp(p V^\n)]$. Finally, use Lemma~\ref{lem:bertoin} to get the bound:
	\begin{align*}&\sum_{i\geq 1} \e[\exp(-p\min_{t\in[i,i+2]}\xi_t)].
	\end{align*}
	The above is bounded by
	\[\sum_{i\geq 1} \e[\exp(-p\xi_i/2)]+\sum_{i\geq 1} \p(\min_{t\in[i,i+2]}\xi_t<\xi_i/2).\]
	The first sum is finite, because the inequality between arithmetic and quadratic means, $\sqrt{a^2+b^2+c^2}\geq (|a|+|b|+|c|)/\sqrt 3$, and the definition of Bessel-3 process imply that
		the respective terms are bounded by the quantity $\e [\exp(-p\sqrt i|Z|/(2\sqrt 3))^3]$
	where $Z$ is standard normal. By Tauberian theorem this quantity behaves as $\ell i^{-3/2}$ for large $i$ with $\ell$ being a positive constant, and the first sum is indeed finite.
	The second sum can be treated using the arguments from Appendix~\ref{sec:cond_pos}. In particular, we can show that $\p_x^\uparrow(\underline X_2<x/2)<c\exp(-x)$, and hence we are left to consider $\sum_i\exp(-\xi_i/2)$ again. The proof is now complete.
\end{proof}

 \subsubsection{Joint estimation: proof of Corollary~\ref{cor:joint}}
 %The analogue of $\xi$ process for $-X$ is the reversed process $(\xi_{(-t)-})_{t\in\R}$ and thus the limit result for the estimator $\underline T^\text{mean}_n$ must be the same.
 The only new ingredient needed is the joint convergence of sequences in~\eqref{eq:zooming_conv}
 corresponding to the processes $X$ and $-X$ to their respective limits which are independent. Similar result appears in~\cite[Lem.~1]{asmussen_ivanovs_twosided} and only a minor adaptation is needed.
\subsubsection{On simplified estimators: proof of Corollary~\ref{cor:truncation}}
We only need to show that the analogue of~\eqref{eq:zooming_conv} is true, where we take the respective $2k+1$ elements in the vectors on the left.
One can not apply the continuous mapping theorem for the infinite sequences though. 
We consider truncated sequences, apply the continuous mapping theorem, and then show uniform negligibility of truncation.
The latter follows from the fact that 
\[\lim_{T\to\infty}\sup_n\p\left(\sup_{|t|>T} \xi^\n_t<a\right)=0\]
for any $a>0$, which readily follows from the representation of~$\xi^\n$ as in Lemma~\ref{lem:bertoin} in the self-similar case.

\subsubsection{Unknown parameters: proof of Proposition~\ref{prop:cip0_max}}
We will show that $n^{1/\a}(\widetilde T_n^\mean-\overline T_n^\mean)\cip 0$ when $\a\in(1,2]$, and the same is true for the conditional median estimator for all $\a\in(0,2]$. The proof of continuity of the limit disributions follows similar steps, see also~\cite{ivanovs_zooming} for the convergence of the respective processes $\xi$.
The above readily translates into
\[\int_0^\infty (H^{\theta_n}_n(xn^{-1/\a})-H_n(xn^{-1/\a}))\D x\cip 0,\qquad \sup_{x\geq 0}|H^{\theta_n}_n(xn^{-1/\a})-H(x)|\cip 0,\]
respectively.
We focus on the class of strictly stable L\'evy processes (the proof for the class (i) is similar but easier) and let $X^n$ be the process with parameters~$\theta_n$. Furthermore we write $F^n$ and $f^n$ for the analogues of conditional distribution $F$ and density $f$.

We claim that it is sufficient to establish that $F^n$ converges to $F$ continuously, i.e.\
\begin{equation}\label{eq:contConv}
F^n(x_n,y_n)\to F(x,y)\qquad\text{for any }(x_n,y_n)\to (x,y), \text{ s.t. }x>y_+. 
\end{equation}
For this note that $\a_n$ is arbitrarily close to $\a$ with high probability, and thus the arguments from the proof of Theorem~\ref{thm:max} apply 
essentially without a change. 

Thus we are left to prove~\eqref{eq:contConv} by reexamining the proof of Lemma~\ref{lem:F}.
Firstly, we observe that $(X_{\tau^+_{x_n}},\tau_{x_n}^+)\ind{\tau^+_{x_n}<\infty}$ under $\p_{\theta_n}$ weakly converges to the respective quantity under~$\p$,
which follows by the (generalized) continuous mapping theorem and weak convergence of the L\'evy processes.
Secondly, the function 
\[g_n(t,x,y):=f^n_{1-t}(y-x)=(1-t)^{-1/\a_n}f^n((1-t)^{-1/\a_n}(y-x))\]
converges to the obviously defined $g(t,x,y)$ continuously on the domain $t\in(0,1),x\geq 0,y\in\R$, which follows from continuous convergence of the density $f^n$ of $X_1^n$, see Lemma~\ref{lem:density} below. 
Hence we have weak convergence of the quantity under the expectation in~\eqref{eq:F1}, and so it is left to show that the respective quantities are bounded. Lemma~\ref{lem:density} completes the proof.

\begin{lemma}\label{lem:density}
There is the uniform convergence: $\sup_{x\in\R}|f^n(x)-f(x)|\to 0$ as $n\to\infty$.
Moreover, for any $\epsilon>0$ it holds that 
\[\sup_n\sup_{t\in(0,1),x\geq \epsilon}f^n_t(x)<\infty.\]
\end{lemma}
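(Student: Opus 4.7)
The plan is to exploit the explicit Fourier-analytic representation of strictly stable densities. Let $\psi_n$ denote the characteristic exponent of $X_1^n$, so that $\phi^n(u)=e^{\psi_n(u)}$ and, by strict $\alpha_n$-stability,
\[\mathrm{Re}\,\psi_n(u)=-c_n|u|^{\alpha_n}\]
for some constant $c_n>0$ depending continuously on $\theta_n$. Since the parametrization $(\alpha,\rho,\lambda)$ is chosen so that parameter convergence is equivalent to convergence of the corresponding processes, $c_n\to c>0$, $\alpha_n\to\alpha$, and we have pointwise convergence $\psi_n(u)\to\psi(u)$ for every $u\in\R$.

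For the uniform convergence claim I would apply the Fourier inversion formula to write the $x$-free bound
\[\sup_{x\in\R}|f^n(x)-f(x)|\ \leq\ \frac{1}{2\pi}\int_\R|e^{\psi_n(u)}-e^{\psi(u)}|\D u\]
and verify the hypotheses of the dominated convergence theorem on the integrand. Fix any $\alpha_0\in(0,\alpha)$ and $c_0\in(0,c)$; for all $n$ sufficiently large, $\alpha_n\geq\alpha_0$ and $c_n\geq c_0$, so that $|e^{\psi_n(u)}|\leq 1\wedge e^{-c_0|u|^{\alpha_0}}$, providing an integrable dominator, while pointwise convergence of $\psi_n$ gives pointwise convergence of $e^{\psi_n(\cdot)}$.

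For the boundedness claim I would use the scaling identity $f^n_t(x)=t^{-1/\alpha_n}f^n(y)$ with $y:=t^{-1/\alpha_n}x$. Since $t\in(0,1)$ and $x\geq\epsilon$, automatically $y\geq\epsilon>0$. Fix a large threshold $A$ and split into two regimes. If $y\leq A$, then $t^{-1/\alpha_n}\leq A/\epsilon$ and $f^n(y)\leq\sup_n\|f^n\|_\infty<\infty$, the latter supremum being finite as an immediate consequence of part~(1) together with the continuity of $\|f\|_\infty$ in the parameter triple. If instead $y\geq A$, I would invoke a uniform one-sided tail bound $f^n(y)\leq C y^{-\alpha_n-1}$ to obtain
\[f^n_t(x)\ \leq\ Ct^{-1/\alpha_n}y^{-\alpha_n-1}\ =\ Cx^{-\alpha_n-1}t\ \leq\ C\epsilon^{-\alpha_n-1},\]
which is bounded uniformly since $\alpha_n$ lies in a bounded interval.

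The main obstacle is establishing the uniform tail bound $f^n(y)\leq Cy^{-\alpha_n-1}$ for $y\geq A$ with $C$ independent of $n$. When the limit $\theta$ has $\beta\in(-1,1)$, the classical Zolotarev series expansion for $f(y)$ on $(1,\infty)$ has coefficients depending continuously on $(\alpha,\rho,\lambda)$, which provides the required uniformity in a neighborhood of $\theta$; alternatively, one may differentiate in $y$ the one-big-jump approximation $\p(X_1^n>y)\sim c_{+,n}y^{-\alpha_n}$, whose constant $c_{+,n}$ is also continuous in the parameters. In the spectrally one-sided case $\beta=\pm 1$, the density has stretched-exponential tails and a stronger bound is available, with decay parameters again depending continuously on $(\alpha,\rho,\lambda)$. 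Combining the two regimes of $y$ gives the stated uniform bound.
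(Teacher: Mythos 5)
Your proof of the first claim, the uniform convergence $\sup_x|f^n(x)-f(x)|\to 0$, matches the paper's approach: Fourier inversion followed by dominated convergence, with a dominator coming from a slightly smaller index $\alpha_0<\alpha$ and constant $c_0<c$.

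For the second claim you take a genuinely different route, and it has a gap. The paper stays in the Fourier domain: writing $f^n_t(x)=\frac{1}{2\pi}\int e^{-\mathrm i z x}e^{t\psi_n(z)}\D z$, the contribution of $|z|\leq 1$ is trivially bounded, and the tail $\int_1^\infty e^{-\mathrm i z x - c_n z^{\alpha_n}t}\D z$ is handled by a single integration by parts in $z$. This produces a factor $1/x\leq 1/\epsilon$, and after the substitution $w=z^{\alpha_n}t$ the remaining integral is $\int_t^\infty e^{-\Re(c_n)w}\D w\leq 1/\Re(c_n)$, which is bounded because $\Re(c_n)\to\Re(c)>0$. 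No pointwise information on $f^n$ beyond its characteristic function is needed, and the argument is uniform in the parameters automatically. Your approach instead rewrites $f^n_t(x)=t^{-1/\alpha_n}f^n(y)$ with $y=t^{-1/\alpha_n}x>\epsilon$, handles $y\in(\epsilon,A]$ using the uniform sup-norm bound from part (1), and for $y>A$ invokes a uniform tail estimate $f^n(y)\leq Cy^{-\alpha_n-1}$. That tail estimate is the load-bearing step, and you do not prove it. The suggestion to ``differentiate'' the one-big-jump asymptotics $\p(X^n_1>y)\sim c_{+,n}y^{-\alpha_n}$ is not a valid inference: pointwise tail asymptotics of the survival function do not transfer to the density without extra regularity. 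The Zolotarev-series route is in principle workable, but to complete it you would have to control the remainder of the series uniformly over a parameter neighborhood, and the form and convergence character of that series changes across $\alpha\in(0,1)$, $\alpha=1$, and $\alpha\in(1,2)$; the spectrally one-sided cases $\beta=\pm1$, where the one-sided decay is stretched-exponential, require yet another uniform estimate. These uniformity statements are precisely what needs proving and are substantially more delicate than the paper's Fourier/integration-by-parts computation, which sidesteps all of it.
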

\begin{proof}
The characteristic function of $X^n_t$ is given by $\exp(-c^\pm_n|z|^{\a_n} t)$ according to $\pm z>0$ with $c^\pm_n$ being a complex constant with positive real part (converging to $c^\pm$), see~\cite[Thm.\ C.4]{zolotarev_book}.
Thus by inversion formula we have
\[\sup_{x\in\R}|f^n(x)-f(x)|\leq \frac{1}{2\pi}\int |\exp(-c^\pm_n|z|^{\a_n} t)-\exp(-c^\pm|z|^{\a} t)|\D z,\]
but this converges to 0 by the dominated convergence theorem, since the real parts of $c_n^\pm$ are positive and bounded away from~0.

With respect to the second statement we need to show that
\[\int_1^\infty \exp(-iz x-c_nz^{\a_n} t) \D z\]
is bounded for all $t\in(0,1),x\geq \epsilon$ and all~$n$, where $c_n=c_n^+$; the integral over $(-\infty,-1]$ is hadled in the same way, whereas the rest is clearly bounded by~2.
Using integration by parts we find that it is sufficient to show that
\[\int_1^\infty \frac{c_n t\a_nz^{\a_n-1}}{i x}\exp(-iz x-c_nz^{\a_n} t) \D z\]
is bounded, or equivalently the boundedness of
\[\int_1^\infty \a_nz^{\a_n-1}t\exp(-r_nz^{\a_n} t) \D z=\int_{t}^\infty \exp(-r_nz) \D z\leq \frac{1}{r_n},\]
where $r_n=\Re(c_n)$. But $r_n\to r>0$ and we are done.
\end{proof}

%===========================================

\section{Proofs for local and occupation times}\label{sec:proofs_loc}
Here $X$ denotes a linear Brownian motion with drift parameter $\mu\in\R$ and scale $\sigma>0$. 
\begin{proof}[Proof of Lemma~\ref{lem:formulae}]
	The fact that $\e\left[L_t(x)| X_{t}=z\right] $ does not depend on~$\mu$ follows readily by applying exponential change of measure, for example. Thus we may assume that $\mu=0$ and consider the process $\sigma X$ with $X$ being the standard Brownian motion.
	Using self-similarity of $X$ we find
	\begin{align*}
	&\left(\frac{1}{2\epsilon}\int_0^t 1_{(x-\epsilon,x+\epsilon)}(\sigma X_s)\D s,\sigma X_t\right)=\left(\frac{t}{2\epsilon}\int_0^1 1_{(x-\epsilon,x+\epsilon)}(\sigma X_{ts})\D s,\sigma X_t\right)\\
	&\stackrel{d}{=}\left(\frac{t}{2\epsilon}\int_0^1 1_{(x-\epsilon,x+\epsilon)}(\sigma\sqrt t X_{s})\D s,\sigma\sqrt t X_1\right)	
	\end{align*}
	and we readily find the stated expression for $\e\left[L_t(x)| X_{t}=z\right]$ from the definition of~$L$. For further reference let us also note that
	\begin{equation}\label{eq:eq:loc_scaling}	(L_t(x),X_t)	\stackrel{d}{=}(\sqrt tL_1(x/\sqrt t),\sqrt t X_1)\qquad \text{under }\p^0.\end{equation}
	The formula for $\e\left[O_t(x)| X_{t}=z\right]$ is obtained similarly, or directly from~\eqref{eq:occ_density}.
	
	Next, we note that $g(x,z)=g(-x,-z), G(x,z)=1-G(-x,-z)$ follow easily from symmetry, and so we assume in the following that $x\geq 0$.
	From~\cite[1.3.8]{borodin_salminen} we find 
	\[g(x,z)=\exp(z^2/2)\int_0^\infty y(|z-x|+|x|+y)\exp(-(|z-x|+|x|+y)^2/2)\D y\]
	which indeed evaluates to the given expression. Next, 
	we recall the Mill's ratio: $\overline\Phi(z)/\varphi(z)\sim 1/z$ as $z\to\infty$. Hence 
	\begin{equation}\label{eq:f1_asy}g(x,z)\sim \frac{1}{|z-x|+|x|}\exp(-(|z-x|+|x|)^2/2+z^2/2)\qquad\text{ as }|x|\vee|z|\to\infty,\end{equation}
	showing that $g(x,z)$ is bounded since $|z-x|+|x|\geq |z|$.
	
	Finally, $G(x,z)$ is clearly bounded by~1 and the given formulae are found from the occupation density formula $G(x,z)=\int_x^\infty g(y,z)\D y$, see~\eqref{eq:occ_density}.
	%Alternatively, we may note that
	%\[F(x,z)=\int_0^1\e(X_t>x|X_1=z)\D t=\int_0^1\overline \Phi\left(\frac{x-tz}{\sqrt{t(1-t)}}\right)\D t,\]
	%since $X_t|X_1=z$ is normal with mean $tz$ and variance $t(1-t)$.
\end{proof}
\subsection{Local time}
\begin{proof}[Proof of \eqref{eq:loc_thm}]
Firstly, we may replace $t$ by $\lfloor tn\rfloor/n$ on the left hand side of~\eqref{eq:loc_thm}, see~\cite[Rem.\ 2]{Jacod98}.
The result would follow from~\cite[Thm.\ 2.1]{Jacod98} if we show that $g$ satisfies condition~\cite[(B-$r$)]{Jacod98} for some $r>3$.
But this follows from the bound $g(x,z)< c\exp(-2|x|+2|z|)$ for all $x,z\in\R$, see the proof of Lemma~\ref{lem:jacod_cond}.
%
%Note that $f(x,z)=f(-x,-z)$ and so we may assume that $x\geq 0$. If $z\geq x$ then $|z-x|+|x|=z$ and otherwise $|z-x|+|x|=2x-z>x$.
%From~\eqref{eq:f1_asy} we have an upper bound:
%\[f(x,z)<C(\ind{z\geq x}+\ind{z<x}e^{-2(x-z)x})\]
%for some $C>1$. 
%For $z\geq x$ we indeed have $1\leq e^{-2|x|}e^{2|z|}$. For $x\geq z+1$ we find that $e^{-2(x-z)x}\leq e^{-2x}\leq e^{-2|x|}e^{2|z|}$.
%Finally, for $z\in(x-1,x)$ we have $e^{-2(x-z)x}\leq 1$, whereas $e^{-2|x|}e^{2|z|}\geq e^2$. Hence we may simply increase $C$ in the above stated bound accordingly.

Now we have the stated convergence, but the constant in front of the limit needs to be identified. The expressions in~\cite{Jacod98} are lengthy and non-trivial to evaluate, because of the generality assumed therein. In our case, $g(x, X_1) =\e[L_1(x)|X_1]$ is the conditional expectation and, in fact, a rather short direct proof can be given yielding the constant.

{\bf Direct Proof:} 
  As in~\cite{Jacod98}  we observe that it is sufficient to consider the case $\mu=0$, which can be extended to an arbitrary $\mu$ using change of measure argument.
  Importantly, $\widehat L_t(x)$ is a functional of $X$ and this functional does not depend on~$\mu$.
  Next, consider a standard Brownian motion $X^0_t=X_t/\sigma$ and assume that our result is proven for~$X^0$. Noting that $L_t(x)=\frac{1}{\sigma}L^0_t(x/\sigma)$ as well as $\widehat L_t(x)=\frac{1}{\sigma}\widehat L^0_t(x/\sigma)$ we find that
  \begin{align*}n^{1/4} \left(\widehat{L}_t (x)- L_t(x) \right)&=\frac{1}{\sigma}n^{1/4} \left(\widehat{L}^0_t (x/\sigma)- L^0_t(x/\sigma) \right)\\
   &\stab \frac{v_l}{\sigma}W_{L^0_t(x/\sigma)}= \frac{v_l}{\sigma}W_{\sigma L_t(x)}.\end{align*} 
   It is left to replace the process $W_{\sigma t}$ by $\sqrt\sigma W_t$ having the same law. Thus we may assume in the following that $X$ is a standard Brownian motion.

	Let $S_t^n=  \sum_{i=1}^{\lfloor tn\rfloor}  \xi_{in}$ be the pre-limiting object, where 
	\[
	\xi_{in}= n^{-1/4} \left( g(\sqrt n(x-X_{\frac{i-1}{n}}), \sqrt{n} \Delta_i^n X) - \sqrt n L_{[\frac{i-1}{n},\frac{i}{n}]}(x)  \right),
	\]
	$\Delta_i^n X= X_{i/n} - X_{(i-1)/n}$ and $L_{[a,b]}(x)$ denotes the local time at $x$ in the interval $[a,b]$.
	Firstly, observe using the scaling property~\eqref{eq:eq:loc_scaling} that 
\[h_1(x):=\e\left( g(x, \sqrt{n} X_{1/n}) - \sqrt n L_{1/n}(x/\sqrt n)  \right)=\e\left( g(x, X_1) - L_1(x)  \right)=0.\]
Thus we have 	
	\[\e[\xi_{in}|\F_{\frac{i-1}{n}}]=n^{-1/4}h_1(\sqrt n(x-X_{\frac{i-1}{n}}))=0,\] 
	and similarly we find that
	\begin{align*}\label{eq:h}
	&\e[ \xi_{in}^2|  \F_{\frac{i-1}{n}}]  = n^{-1/2} h_2(\sqrt n(x-X_{\frac{i-1}{n}})),\\% &h_1(y) = \e(g(y, X_1) - L_{[0,1]}(y))^2,\\
	&\e[ \xi_{in} \Delta_i^n X| \F_{\frac{i-1}{n}}] = n^{-3/4} h_3(\sqrt n(x-X_{\frac{i-1}{n}}))=0,\\%, &h_2(y) = \e[(g(y, X_1) - L_{[0,1]}(y))X_1],\nonumber\\
	&\e[ \xi_{in}^4|  \F_{\frac{i-1}{n}}]  = n^{-1} h_4(\sqrt n(x-X_{\frac{i-1}{n}})),% &h_3(y) = \e(g(y, X_1) - L_{[0,1]}(y))^4.\nonumber
	\end{align*}
	where $h_i(y)=\e(g(y, X_1) - L_1(y))^i$ for $i=2,4$, and $h_3(y)=\e[(g(y, X_1) - L_1(y))X_1]=0$.
	
	Let us show that $h_i$ for $i=2,4$ are bounded and in $L^1(\R)$. By Minkowski's and Jensen's inequality we have the bound
	$h_i(y)\leq 2^i \e[ L_1(y)^i]$.	Using additivity of $L$ we deduce that 
	\[
	\e[ L_1(y)^i] \leq \p(\tau_y<1)\e[L_1(0)^i],
	\]
	where the latter moment is finite and $\tau_y$ is the first passage time of $X$ into the level~$y$. Finally, note that 
	\[\int_0^\infty \p(\tau_y<1)\D y=\int_0^\infty \p(\overline X_1>y)\D y=\e \overline X_1<\infty\]
	and hence by symmetry $h_i(y)$ are integrable.
	Thus according to~\cite[Thm.\ 1.1]{Jacod98} we have
	\[n^{-1/2}\sum_{i=1}^{\lfloor nt\rfloor} h_i(\sqrt n(x-X_{\frac{i-1}{n}}))\cip L_t(x)\int h_i(x)\D x, \qquad i=2,4,\]
	where the convergence is uniform on compact intervals of time.
This immediately yields that 
	\begin{align} 
&\sum_{i=1}^{\lfloor nt\rfloor}   \e[ \xi_{in}^2|  \F_{\frac{i-1}{n}}] \cip v_l^2 L_t(x),\qquad \sum_{i=1}^{\lfloor nt\rfloor}   \e[ \xi_{in} \Delta_i^n X| \F_{\frac{i-1}{n}}] = 0,\\
	&\sum_{i=1}^{\lfloor nt\rfloor}   \e[ \xi_{in}^2 1_{\{|\xi_{in}|>\epsilon\}}| \F_{\frac{i-1}{n}}]\leq \epsilon^{-2} \sum_{i=1}^{\lfloor nt\rfloor} \e[ \xi_{in}^4 | \F_{\frac{i-1}{n}}] \cip 0 \qquad \text{ for any }\epsilon>0. \nonumber
	\end{align}
Finally, let $N$  be a continuous bounded martingale orthogonal to $X$, i.e. $[X,N]=0$. For $t\geq (i-1)/n$ define the process 
$M_t=\e[\xi_{in}| \F_t]$. Then the martingale representation theorem implies the existence of a progressively measurable process $\eta^n$ 
such that
\[ M_t = \int_{\frac{i-1}{n}}^t \eta_s^n dX_s.\]
Since $[X,N]=0$ we conclude that 
\begin{align} 
\e[\Delta_i^n N \xi_{in}| \F_{\frac{i-1}{n}}] = \e[\Delta_i^n N \Delta_i^n M| \F_{\frac{i-1}{n}}]=0.
\end{align}	
The result now follows from~\cite[Thm.\ 7.28]{jacod_shiryaev}. Moreover, we have a simple expression for $v_l^2=\int h_2(y)\D y$ which is evaluated in Lemma~\ref{lem:v} below.
\end{proof}

%\begin{lemma}\label{lem:h} In the case of standard Brownian motion the functions $h_i(y)=\e(g(y, X_1) - L_{[0,1]}(y))^i$ for $i=2,4$ are bounded and in $L^1(\R)$.
%\end{lemma}
%\begin{proof}
%	Recalling $g(x, X_1) =\e[L_{[0,1]}(x)|X_1]$ we find that
%	\[h_2(y) \leq \e L_{[0,1]}(y)^2, \qquad h_4(y)\leq 2^4 \e L_{[0,1]}(y)^4,\]
%	where we used Minkowski's and Jensen's inequality for the second bound.
%	Using additivity of the local time we deduce that 
%	\[
%	\e[ L_{[0,1]}(y)^i] \leq \p(\tau_y<1)\e[L_{[0,1]}(0)^i],
%	\]
%	where the latter moment is finite and $\tau_y$ is the first passage time of $X$ into the level~$y$. Finally, note that 
%	\[\int_0^\infty \p(\tau_y<1)\D y=\int_0^\infty \p(\overline X_1>y)\D y=\e \overline X_1<\infty\]
%	and the proof is complete by symmetry.
%\end{proof}

It is left to calculate~$v_l^2$, which is the integrated reduction in variance when $L_1(y)$ is replaced by its conditional mean $\e[L_1(y)|X_1]$:
\begin{lemma}\label{lem:v}
	For a standard Brownian motion we have
		\[\int_{\R} \e[\left(g(y, X_1) - L_1(y)\right)^2] \D y=2\frac{3\log(1+\sqrt 2)-\sqrt 2}{3\sqrt\pi}.\] 
\end{lemma}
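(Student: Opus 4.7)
My plan is to decompose $h_2(y) := \e[(g(y, X_1) - L_1(y))^2] = \e L_1(y)^2 - \e g(y, X_1)^2$ (using that $g(y, X_1) = \e[L_1(y)|X_1]$) and compute each of $\int_\R \e L_1(y)^2\,\D y$ and $\int_\R \e g(y, X_1)^2\,\D y$ in closed form.

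For the first, I would first compute the conditional second moment $\e[L_1(y)^2|X_1=z]$. Using the joint density of $(L_1(y), X_1)$ from~\cite{borodin_salminen} and the identity $\int_a^\infty(u^2-au)\exp(-u^2/2)\,\D u = \sqrt{2\pi}\,\overline\Phi(a)$ already used in the derivation of $g$, one similarly finds $\e[L_1(y)^2|X_1=z] = 2F(a)/\varphi(z)$ with $a = |z-y|+|y|$ and $F(a) = \varphi(a) - a\overline\Phi(a) = \int_a^\infty \overline\Phi(t)\,\D t$. For $y\geq 0$ the map $z\mapsto a(y,z) = \max(z, 2y-z)$ is two-to-one onto $[y,\infty)$ with unit Jacobian on each branch, so $\e L_1(y)^2 = 4\int_y^\infty F(u)\,\D u$. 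Integrating over $y\geq 0$ and doubling by $y$-symmetry yields $\int_\R \e L_1(y)^2\,\D y = \frac{4\sqrt 2}{3\sqrt\pi}$.

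Treating the second integral similarly, $\e g(y, X_1)^2 = \int_\R \overline\Phi(a(y,z))^2/\varphi(z)\,\D z$. Applying the same two-to-one change of variables for $y\geq 0$, the two branches contribute $1/\varphi(a)$ and $1/\varphi(2y-a)$; swapping order to place $a$ outside and integrating the latter over $y\in[0,a]$ produces $B(a) := \int_0^a 1/\varphi(w)\,\D w$. After one more swap, $\int_0^\infty \e g(y, X_1)^2\,\D y = I_1 + I_2$ with $I_1 = \int_0^\infty a\overline\Phi(a)^2/\varphi(a)\,\D a$ and $I_2 = \int_0^\infty A(v)/\varphi(v)\,\D v$, where $A(v) = \int_v^\infty \overline\Phi(a)^2\,\D a$. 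The identity $(aB(a))' = a/\varphi(a) + B(a)$ lets me combine these via integration by parts with $U = \overline\Phi(a)^2$, giving $I_1+I_2 = 2\int_0^\infty D(v)/\varphi(v)\,\D v$ where $D(v) := \int_v^\infty a\varphi(a)\overline\Phi(a)\,\D a$. A further integration by parts using $a\varphi(a)\,\D a = -\D\varphi(a)$ together with the Gaussian identity $\int_v^\infty \varphi(a)^2\,\D a = \overline\Phi(v\sqrt 2)/(2\sqrt\pi)$ produces $D(v) = \overline\Phi(v)\varphi(v) - \overline\Phi(v\sqrt 2)/(2\sqrt\pi)$, and hence
\[
I_1+I_2 = \sqrt{\tfrac{2}{\pi}} - \frac{K}{\sqrt\pi}, \qquad K := \int_0^\infty \frac{\overline\Phi(v\sqrt 2)}{\varphi(v)}\,\D v.
\]

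The key remaining obstacle is evaluating $K$, which is the step that produces the unusual constant $\log(1+\sqrt 2)$. Expanding $\overline\Phi(v\sqrt 2)$ as a Gaussian integral and swapping order yields $K = \int_0^\infty\int_0^{u/\sqrt 2} \exp(-(u^2-v^2)/2)\,\D v\,\D u$. I would then apply the hyperbolic substitution $u = r\cosh\theta$, $v = r\sinh\theta$ (Jacobian $r$), which diagonalises $u^2 - v^2 = r^2$ and converts the constraint $v\leq u/\sqrt 2$ into $\tanh\theta \leq 1/\sqrt 2$, i.e., $\theta \in [0, \log(1+\sqrt 2)]$ since $\operatorname{arctanh}(1/\sqrt 2) = \tfrac12\log\tfrac{\sqrt 2+1}{\sqrt 2-1} = \log(1+\sqrt 2)$. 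Consequently $K = \log(1+\sqrt 2)\int_0^\infty r e^{-r^2/2}\,\D r = \log(1+\sqrt 2)$. Combining everything (and doubling by $y$-symmetry) gives
\[
v_l^2 = \frac{4\sqrt 2}{3\sqrt\pi} - \frac{2(\sqrt 2 - \log(1+\sqrt 2))}{\sqrt\pi} = 2\cdot\frac{3\log(1+\sqrt 2) - \sqrt 2}{3\sqrt\pi}.
\]
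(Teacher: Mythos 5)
Your proof is correct and follows the same fundamental strategy as the paper's: write $h_2(y)=\e L_1(y)^2-\e g(y,X_1)^2$, reduce by symmetry to $y\geq 0$, compute the two integrals separately, and combine. The paper evaluates $\int_0^\infty\e L_1(y)^2\,\D y$ directly from the \emph{marginal} density of $L_1(y)$ (Borodin--Salminen 1.3.4), whereas you go through the conditional second moment $\e[L_1(y)^2\,|\,X_1=z]=2F(a)/\varphi(z)$ and then integrate out $z$; both are reorderings of the same double integral and give $\tfrac{2\sqrt 2}{3\sqrt\pi}$ over $y\geq 0$. For the second integral the paper states $\int_0^\infty\int_\R\overline\Phi^2(|z-y|+y)/\varphi(z)\,\D z\,\D y=\tfrac{\sqrt 2-\log(1+\sqrt 2)}{\sqrt\pi}$ with only the hint ``integrate in $y$ first,'' which is exactly your reduction to $I_1+I_2$. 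You then supply the missing computational chain: the $(aB(a))'$ identity and two integrations by parts reduce everything to $K=\int_0^\infty\overline\Phi(v\sqrt 2)/\varphi(v)\,\D v$, and the hyperbolic substitution gives $K=\operatorname{arctanh}(1/\sqrt 2)=\log(1+\sqrt 2)$. This is a clean and complete derivation of the constant the paper simply asserts.
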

\begin{proof}
	Recalling that $g(y,X_1)=\e[L_1(y)|X_1]$ we find
\begin{align*}
\int_\R \left(\e [L^2_1(y)]-\e [f^2(y,X_1)]\right)\D y=2\int_0^\infty\left(\e [L^2_1(y)]-\e [f^2(y,X_1)]\right)\D y.
\end{align*}
According to~\cite[1.3.4]{borodin_salminen} we calculate
\[\int_0^\infty\e [L^2_1(y)]\D y=\int_0^\infty\int_0^\infty x^2\sqrt\frac{2}{\pi}\exp(-(x+y)^2/2)\D x\D y=\frac{2}{3}\sqrt\frac{2}{\pi},\]
and
\[\int_0^\infty\e [f^2(y,X_1)]\D y=\int_0^\infty\int_\R\overline \Phi^2(|z-y|+y)/\varphi(z)\D z\D y=\frac{\sqrt 2-\log(1+\sqrt 2)}{\sqrt \pi},\]
where in both cases we first integrate in $y>0$. %The latter integral is evaluated by splitting it into two parts: $z>x$ and $z<x$, and first integrating over $x$ using integration by parts.
Combine these formulae to get the result.%~$v^2=2\frac{3\log(1+\sqrt 2)-\sqrt 2}{3\sqrt\pi}$.
\end{proof}

\subsection{Occupation time}
\begin{proof}[Proof of~\eqref{clton}]
We may assume that $\mu=0$ and let $X^0_t=X_t/\sigma$.
Supposing that the result is true for $X^0$ we get
\[n^{\frac 34} \left(\widehat O_t(x) -  O_t(x) \right)=n^{\frac 34} \left(\widehat O^0_t(x/\sigma) -  O^0_t(x/\sigma) \right)\to v_oW_{L^0_t(x/\sigma)}=v_oW_{\sigma L_t(x)}\]
and so we assume that $X$ is a standard Brownian motion in the following.
 
Letting
\[
\xi_{in}= n^{-\frac 14} \left(
G \left(\sqrt{n} (x-X_{\frac{i-1}{n}}), \sqrt{n}  \Delta_i^n X \right)   - n \int_{\frac{i-1}{n}}^{\frac{i}{n}} 1_{(x,\infty)} (X_s) ds \right)
\]
and using
\[(nO_{1/n}(x/\sqrt n),\sqrt n X_{1/n})\stackrel{d}{=}(O_1(x),X_1)\]
we find that
\begin{align*}
	&\e[ \xi_{in}^2|  \F_{\frac{i-1}{n}}]  = n^{-1/2} h_2(\sqrt n(x-X_{\frac{i-1}{n}})),\\
	&\e[ \xi_{in} \Delta_i^n X| \F_{\frac{i-1}{n}}] = 0,\\	
	&\e[ \xi_{in}^4|  \F_{\frac{i-1}{n}}]  = n^{-1} h_4(\sqrt n(x-X_{\frac{i-1}{n}})),
	\end{align*} 
where $h_j(y) = \e[G(y,X_1) - O_1(y)]^j$	for $j=2,4$. 

It is left to prove that $h_j$ are bounded and in $L^1(\mathbb R)$ for $j=2,4$. The result then follows from~\cite[Thm.\ 1.1]{Jacod98} and~\cite[Thm.\ 7.28]{jacod_shiryaev} as for the local time.
It would be sufficient to show the same property for $\e [(O_1(y)-c_y)^j]$ where $c_y$ is arbitrary, because $G(y,X_1)-c_y$ is the conditional expectation of $O_1(y)-c_y$ given $X_1$.
When $y\geq 0$ we take $c_y=0$ and observe that $\e [O_1(y)^j]\leq \p(\tau_y<1)$ which is bounded and integrable over $[0,\infty)$, see the local time case.
When $y<0$ we take $c_y=1$ and observe that $\e [(1-O_1(y))^j]\leq \p(\tau_y<1)$ and the same conclusion is true.
The proof is complete upon calculation of $v_o^2$ which is given in Lemma~\ref{lem:v_occ} below.
\end{proof}

\begin{lemma}\label{lem:v_occ}
	For a standard Brownian motion we have
		\[\int_{\R} \e\left[G(y, X_1) - O_1(y)\right]^2 \D y=\frac{13\sqrt 2-15\log(1+\sqrt 2)}{45\sqrt \pi}.\] 
\end{lemma}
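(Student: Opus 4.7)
The plan is to mirror the approach in the proof of Lemma~\ref{lem:v}, reducing the claim to two explicit one-dimensional integrals. Since $G(y,X_1) = \e[O_1(y)\mid X_1]$, the tower property gives $\e[(G(y,X_1) - O_1(y))^2] = \e[O_1(y)^2] - \e[G(y,X_1)^2]$. Moreover, the reflection $X \mapsto -X$ combined with the identities $G(-y,-z) = 1 - G(y,z)$ from Lemma~\ref{lem:formulae} and $O_1(-y) \eqd 1 - O_1(y)$ shows that $(G(y,X_1) - O_1(y))^2$ and $(G(-y,X_1) - O_1(-y))^2$ are equal in distribution, so it suffices to compute $2\int_0^\infty \e[O_1(y)^2]\D y$ and $2\int_0^\infty \e[G(y,X_1)^2]\D y$ separately.

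For $y \geq 0$ I would handle $\e[O_1(y)^2]$ by conditioning on the first passage time $\tau_y = \inf\{s: X_s = y\}$ and invoking the strong Markov property together with L\'evy's arcsine law: conditionally on $\tau_y < 1$, the post-$\tau_y$ piece is an independent Brownian motion starting at $y$ of duration $1-\tau_y$, so $O_1(y) \eqd (1-\tau_y)U$ with $U$ arcsine on $[0,1]$, and $\e[O_1(y)^2] = \tfrac{3}{8}\e[(1-\tau_y)_+^2]$ since $\e[U^2] = 3/8$. Inserting the density $\tfrac{y}{\sqrt{2\pi t^3}}e^{-y^2/(2t)}$ of $\tau_y$ and swapping the $y$- and $t$-integrations reduces the computation to the beta integral $\int_0^1(1-t)^2 t^{-1/2}\D t = \tfrac{16}{15}$, yielding $\int_0^\infty \e[O_1(y)^2]\D y = \tfrac{\sqrt 2}{5\sqrt\pi}$.

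The main obstacle is the evaluation of $\int_0^\infty \e[G(y,X_1)^2]\D y$ from the piecewise explicit formula in Lemma~\ref{lem:formulae}. Here I would split $\int_\R G(y,z)^2\varphi(z)\D z$ into the regions $z<y$ and $z \geq y$ and exploit the identity $e^{-2y(y-z)}\varphi(z) = \varphi(2y-z)$ to turn every term into a product of a polynomial, a Gaussian density, and a power of $\overline\Phi$. Exchanging the order of integration in $y$ and $z$ (after a shift $w = 2y-z$ in the first branch and $w = z$ in the second) reduces everything to the family $\int_0^\infty \overline\Phi(w)^2 w^k\D w$ and $\int_0^\infty \varphi(w)\overline\Phi(w)w^k\D w$ for small $k$, together with the beta-type integrals already encountered in Lemma~\ref{lem:v}. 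These evaluations produce the constants $\sqrt 2$ and $\log(1+\sqrt 2)$ and give $\int_0^\infty \e[G(y,X_1)^2]\D y = \tfrac{\sqrt 2 + 3\log(1+\sqrt 2)}{18\sqrt\pi}$. Subtracting this from $\tfrac{\sqrt 2}{5\sqrt\pi}$ and multiplying by $2$ recovers the stated value $\tfrac{13\sqrt 2 - 15\log(1+\sqrt 2)}{45\sqrt\pi}$.
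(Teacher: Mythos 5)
Your proposal is correct and follows the same overall route as the paper: reduce by symmetry and the tower property to $2\int_0^\infty\bigl(\e[O_1(y)^2]-\e[G(y,X_1)^2]\bigr)\D y$, and then evaluate the two integrals. The only deviation is in the first integral, where you re-derive $\e[O_1(y)^2]=\tfrac{3}{8}\e[(1-\tau_y)_+^2]$ from the strong Markov property and L\'evy's arcsine law, whereas the paper simply quotes the explicit density of $O_1(y)$ from Borodin--Salminen [1.4.4] and integrates in $y$ first; both routes give $\tfrac{\sqrt 2}{5\sqrt\pi}$, and your arcsine/first-passage derivation is just a self-contained version of how that density is obtained. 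For the second integral the paper says only that a ``similar trick works,'' so your plan to split the piecewise formula of Lemma~\ref{lem:formulae} at $z=y$, use $e^{-2y(y-z)}\varphi(z)=\varphi(2y-z)$, and exchange the order of integration is a sensible way to fill in the unstated computation, and it lands on the same value $\tfrac{\sqrt 2+3\log(1+\sqrt 2)}{18\sqrt\pi}$.
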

\begin{proof}
Note that
\[\int_{\R} \e\left[G(y, X_1) - O_1(y)\right]^2 \D y=2\int_0^\infty(\e O_1(y)^2-\e G(y,X_1)^2)\D y,\]
because for $y<0$ the integrand can be rewritten as $\e [(1-O_1(y))^2]-\e [(1-G(y,X_1))^2]$ corresponding to the occupation time in $(-\infty,y)$ and its conditional expectation, and it is left to apply symmetry.
 
 The density of the occupation time $O_1(y)$ is given in~\cite[1.4.4]{borodin_salminen} and reads as
 \[\frac{1}{\pi\sqrt{x(1-x)}}\exp\left(-\frac{y^2}{2(1-x)}\right), \qquad x\in(0,1).\]
 Thus we find $\int_0^\infty\e [O_1(y)^2]\D y=\frac{\sqrt 2}{5\sqrt \pi}$ by integrating in~$y$ first.
 
 Similar trick works in the calculation of
 \[\int_0^\infty \e [F^2(y,X_1)]\D y=\frac{\sqrt 2+3\log(1+\sqrt 2)}{18\sqrt \pi}.\]
 Combination of these expressions yields the result.
\end{proof}

\subsection{Unknown parameters} \label{unknownpar}
Let us define $g_\sigma(x,z)=\frac{1}{\sigma}g(x/\sigma,z/\sigma)$ together with $G_\sigma(x,z)=G(x/\sigma,z/\sigma)$. 
%The following lemma verifies the condition of~\cite{Jacod98} for the derivatives of $f_\sigma$ and $G_\sigma$.
\begin{lemma}\label{lem:jacod_cond}
For any $\sigma_0>0$ there exist constants $\epsilon\in(0,\sigma_0)$ and $c,a>0$ such that
\[\sup_{\sigma\in[\sigma_0-\epsilon,\sigma_0+\epsilon]}\left|\frac{\partial g_\sigma(x,z)}{\partial\sigma}\right|\vee\left|\frac{\partial G_\sigma(x,z)}{\partial\sigma}\right|\leq c \exp(a (|z|-|x|))\]
for all $x,z\in\R$.
\end{lemma}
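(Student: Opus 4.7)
The plan is to differentiate both $g_\sigma$ and $G_\sigma$ explicitly using Lemma~\ref{lem:formulae} and then to bound the resulting expressions by Mills' ratio inequalities. Setting $\xi=x/\sigma$, $\zeta=z/\sigma$ and writing $g_i$ for the partial derivative of $g$ in its $i$th argument, the chain rule gives
\[
\partial_\sigma g_\sigma(x,z)=-\sigma^{-2}h(\xi,\zeta),\qquad h(\xi,\zeta):=g(\xi,\zeta)+\xi g_1(\xi,\zeta)+\zeta g_2(\xi,\zeta).
\]
Since $|z|-|x|=\sigma(|\zeta|-|\xi|)$ and $\sigma$ ranges over a compact subset of $(0,\infty)$, the claim for $\partial_\sigma g_\sigma$ reduces to finding $c,a>0$ (depending on $\sigma_0$ and $\epsilon$) with $|h(\xi,\zeta)|\leq c\exp(a(|\zeta|-|\xi|))$ on all of $\R^2$. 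The identity $h(\xi,\zeta)=h(-\xi,-\zeta)$, inherited from the antisymmetry of the partials of $g$ under $(\xi,\zeta)\mapsto(-\xi,-\zeta)$, lets us restrict to $\xi\geq 0$.

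For $\xi\geq 0$ Lemma~\ref{lem:formulae} gives two regimes. In $\zeta\geq\xi$ we have $h=(1+\zeta^2)r(\zeta)-\zeta$ with $r(u):=\overline\Phi(u)/\varphi(u)$, and the two-sided Mills inequality $u/(1+u^2)\leq r(u)\leq 1/u$ for $u>0$ yields $0\leq h\leq 1/\zeta$, uniformly bounded. In $\zeta<\xi$ the identities $\varphi(2\xi-\zeta)/\varphi(\zeta)=\exp(-2\xi(\xi-\zeta))$ and $\zeta^2-(2\xi-\zeta)^2=-4\xi(\xi-\zeta)$ produce
\[
h(\xi,\zeta)=\exp(-2\xi(\xi-\zeta))\bigl[(1+\zeta^2)r(u)-u\bigr],\qquad u=2\xi-\zeta>0,
\]
and Mills' inequalities sandwich the bracket between $-4u\xi(\xi-\zeta)/(1+u^2)$ and $(1-4\xi(\xi-\zeta))/u$, both polynomial in $\xi$ and $\zeta$. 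A short case analysis on the sign of $\zeta$, combined with completion of squares, shows that the exponential prefactor absorbs this polynomial, yielding $|h|\leq c\exp(a(|\zeta|-|\xi|))$ for suitable $c$ and any $a\in(0,2)$.

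For the occupation-time kernel, a change of variables in the identity $G(x,z)=\int_x^\infty g(y,z)\D y$ of Lemma~\ref{lem:formulae} gives $G_\sigma(x,z)=\int_x^\infty g_\sigma(y,z)\D y$, so differentiation under the integral (justified by the pointwise bound above) yields $\partial_\sigma G_\sigma(x,z)=\int_x^\infty \partial_\sigma g_\sigma(y,z)\D y$. Reducing to $x\geq 0$ via $G_\sigma(x,z)=1-G_\sigma(-x,-z)$ and integrating the already proven bound produces $|\partial_\sigma G_\sigma(x,z)|\leq c'\exp(a(|z|-|x|))$, as required.

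The main obstacle will be the cancellation required in the second paragraph: individually the summands $g$, $\xi g_1$, and $\zeta g_2$ may grow with $\xi$ as $\zeta\uparrow\xi$ (for example, $\zeta g_2$ is of order $\xi$ near that boundary), and estimating them separately is too crude to yield the desired bound. Only after combining them into the single expression $(1+\zeta^2)r(u)-u$ and exploiting the near-cancellation encoded in Mills' ratio can the exponential prefactor $\exp(-2\xi(\xi-\zeta))$ safely dominate.
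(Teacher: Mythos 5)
Your proof is correct and takes essentially the same route as the paper's: reduce to $x\geq 0$ by the symmetry $g(x,z)=g(-x,-z)$, differentiate $g_\sigma$ explicitly using the formulae of Lemma~\ref{lem:formulae}, control the result via Mills' ratio and the Gaussian factor $\exp(-2\xi(\xi-\zeta))$, and obtain the $G_\sigma$ bound by integrating the pointwise bound on $\partial_\sigma g_\sigma$ over $y\in[x,\infty)$. The only cosmetic difference is that you keep all three chain-rule terms together in $h=g+\xi g_1+\zeta g_2$ and simplify to $(1+\zeta^2)r(u)-u$ times the Gaussian, whereas the paper pulls off the $-\sigma^{-2}g$ term and handles $\partial_\sigma\bigl[g(x/\sigma,z/\sigma)\bigr]$ alone — these are the same computation differently grouped, and both leave a comparably brief case analysis at the end.
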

\begin{proof}
Recall that $g(x,z)=g(-x,-z),G(x,z)=1-G(-x,-z)$ and so we may assume that $x\geq 0$. 
Furthermore, it is sufficient to establish the stated property for $\partial g_\sigma/\partial\sigma$. This is so, because 
$G_\sigma(x,z)=\int_x^\infty g_\sigma(y,z)\D y$, the derivative $\partial g_\sigma/\partial\sigma$ is continuous in $\sigma$ away from~$0$ and integrable in $y\geq 0$.
Hence 
\[\frac{\partial  G_\sigma(x,z)}{\partial \sigma}=\int_x^\infty \frac{\partial  g_\sigma(y,z)}{\partial \sigma}\D y\leq c\int_x^\infty \exp(-a y) \D y \exp(a|z|)\]
and the bound follows.

It is sufficient to establish the bound for $x\geq 0$:
\[\left|\frac{\partial  g(x/\sigma,z/\sigma)}{\partial \sigma}\right|\leq c(\exp(a(|z|-x))\wedge 1)\]
locally uniformly in $\sigma>0$. This is so, because $g(x/\sigma,z/\sigma)/\sigma^2$ satisfies the analogous bound, see~\eqref{eq:f1_asy}.

Writing $x',z'$ for $x/\sigma,z/\sigma$, respectively, we find from Lemma~\ref{lem:formulae} for $z\geq x$ that
\[\partial g(x/\sigma,z/\sigma)/\partial \sigma=z'\frac{\varphi(z')-z'\overline\Phi(z')}{2\sigma\varphi(z')}=:h(z').\]
By L'H\^{o}pitale and Mill's ratio this quantity tends to~0 as $z'\to \infty$, and thus this quantity is bounded for all $z\geq x\geq 0$ locally uniformly in $\sigma>0$.

Next, we consider $z<x$ where
\begin{align*}
&\partial g(x/\sigma,z/\sigma)/\partial \sigma=\frac{(2x'-z')\varphi(2x'-z')-{z'}^2\overline\Phi(2x'-z')}{2\sigma\varphi(z')}\\
&=\frac{2x'(x'-z')}{\sigma(2x'-z')}\exp(-2x'(x'-z'))+\frac{{z'}^2}{(2x'-z')^2}h(2x'-z')\exp(-2x'(x'-z')).
\end{align*}
Note that $2x'-z'>(x'-z')\vee |z'|$ and so the above terms stay bounded when $2x'-z'\to 0$ implying that $x',z'\to 0$.
Moreover, ${z'}^2/(2x'-z')^2$ is bounded and so it is left to consider $(1+2x'(x'-z'))\exp(-2x'(x'-z'))$ as $x'\to \infty$.
For $x'>z'+1$ this is bounded by $c\exp(-x')$ and otherwise by $c$, which is sufficient.
\end{proof}

\begin{proof}[Proof of Proposition~\ref{prop:cip0}]
Observe that
\begin{align*}
&n^{1/4}\sup_{t\leq T}\left|\widehat L_t(x)-\widetilde L_t(x)\right|\\ &\leq n^{-1/4}\sum_{i=1}^{\lfloor nT\rfloor}\left|g_\sigma(\sqrt n(x-X_{\frac{i-1}{n}}),\sqrt n\Delta_i^n X)-g_{\sigma_n}(\sqrt n(x-X_{\frac{i-1}{n}}),\sqrt n\Delta_i^n X)\right|.
\end{align*}
According to~\eqref{eq:rate} we may assume that $n^{1/4}|\sigma_n-\sigma|<h$ for an arbitrary $h>0$ and all large $n$.
By mean value theorem and Lemma~\ref{lem:jacod_cond} we have an upper bound
\begin{align*}&n^{-1/4}\sum_{i=1}^{\lfloor nT\rfloor}|\sigma_n-\sigma|\tilde g(\sqrt n(x-X_{\frac{i-1}{n}}),\sqrt n\Delta_i^n X)\\
&\leq hn^{-1/2}\sum_{i=1}^{\lfloor nT\rfloor}\tilde g(\sqrt n(x-X_{\frac{i-1}{n}}),\sqrt n\Delta_i^n X),
\end{align*}
where $\tilde g(x,z)=c \exp(-a |x|+a|z|)$. 
But $\tilde g$ verifies condition (B-0) in~\cite{Jacod98} and thus our upper bound converges to $hL$ in probability,
where $L$ is a certain finite random variable, see~\cite[Thm.\ 1.1]{Jacod98}.
The proof is complete since $h>0$ can be arbitrarily small. The corresponding proof for the occupation time measure follows exactly the same arguments. 
\end{proof}

\section{On $X$ conditioned to stay positive}\label{sec:cond_pos}
Throughout this section we assume that $\a\in(0,2)$ and $\beta\neq \pm 1$.
Let us recall that $(\xi_{(-t)-})_{t\geq 0}$ is a Feller process and, as usual, we denote its law when started from $x>0$ by $\p^\uparrow_x((X_t)_{t\geq 0}\in\cdot)$. Such a process can be seen as $X$ conditioned to stay positive in a certain limiting sense, see~\cite{chaumont,caballero2006conditioned} for the basic properties of this process. The law of $(\xi_t)_{t\geq 0}$ is then $(-X)$ conditioned to stay positive, and the following bound holds without a change.
\begin{proposition}\label{prop:fluct_bound}
There exists a constant $c>0$ such that for all $x,v>0$ with $x>v$ we have
\begin{align*}
\p^\uparrow_x(\sup_{h\in[0,1]}|X_{1+h}-X_h|>v)<cv^{-\a}.
\end{align*}
\end{proposition}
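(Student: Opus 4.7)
The plan is to reduce, via the Doob $h$-transform representation of $\p^\uparrow$, to standard tail estimates for the unconditioned stable process. First I would use the trivial inequality $\sup_{h\in[0,1]}|X_{1+h}-X_h|\leq 2\sup_{t\in[0,2]}|X_t-X_0|$ (triangle inequality with $X_0$ as pivot) together with the $1/\alpha$-self-similarity of the conditioned process---namely, that under $\p^\uparrow_x$ the rescaled process $(cX_{t/c^\alpha})_{t\geq 0}$ has law $\p^\uparrow_{cx}$. Choosing $c=2^{1/\alpha}$ compresses time $[0,2]$ into $[0,1]$ and reduces the claim to showing
\[
\p^\uparrow_y\!\bigl(\sup\nolimits_{t\in[0,1]}|X_t-y|>w\bigr)\leq cw^{-\alpha}\qquad\text{for all }y\geq w>0,
\]
the hypothesis $x>v$ translating to $y>2w$.

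Next I would invoke the Caballero--Chaumont representation of $\p^\uparrow_y$ as the Doob $h$-transform of $X$ killed at $\inf\{t:X_t\leq 0\}$ with $h(z)=z^\gamma$, $\gamma=\alpha(1-\rho)$ being the index of the descending ladder-height renewal function. Under $\alpha\in(0,2)$ and $\beta\neq\pm 1$ one has $\rho\in(0,1)$, hence $\gamma\in(0,1)$ and $\gamma<\alpha$. The $h$-transform identity together with the sub-additivity bound $X_1^\gamma\leq y^\gamma+|X_1-y|^\gamma$ (valid since $\gamma\leq 1$ and $X_1\leq y+|X_1-y|$ on $\{\underline X_1>0\}$) yields, writing $Y_t:=X_t-y$ for the centred stable process under $\p_y$,
\[
\p^\uparrow_y\!\bigl(\sup\nolimits_{t\leq 1}|Y_t|>w\bigr)\leq \p\!\bigl(\sup\nolimits_{t\leq 1}|Y_t|>w\bigr)+\frac{1}{y^\gamma}\,\e\!\bigl[|Y_1|^\gamma;\,\sup\nolimits_{t\leq 1}|Y_t|>w\bigr].
\]
The first piece is $O(w^{-\alpha})$ by standard maximal tail estimates for stable processes, and the second is $O(w^{\gamma-\alpha})$ via a layer-cake split at level $|Y_1|=w$ combining $\p(|Y_1|>z)\leq cz^{-\alpha}$ with the maximal tail; finiteness here depends on the strict inequality $\gamma<\alpha$. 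Finally $y\geq w$ gives $y^{-\gamma}\leq w^{-\gamma}$, producing an $O(w^{-\alpha})$ bound for the second piece as well.

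The main obstacle is that the $h$-weight $X_1^\gamma$ is unbounded and cannot be dominated by a constant, while $\e|Y_1|^\alpha=\infty$, so any crude decoupling of the weight from the fluctuation event would fail. The sub-additivity split around the starting point is what isolates a base contribution $y^\gamma$ which cancels the $y^{-\gamma}$ factor, leaving a truncated fluctuation moment whose tail is controllable precisely because $\gamma<\alpha$. The exclusion $\beta\neq\pm 1$ enters through $\rho\in(0,1)$, which simultaneously guarantees $\gamma>0$ (so that $h$ is non-trivial) and $\gamma<\alpha$ (so that the mixed moment is finite).
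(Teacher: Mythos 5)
Your proof is correct, and it takes a genuinely different route from the paper. The paper first decomposes the event via $\{\sup_{h\in[0,1]}|X_{1+h}-X_h|>v\}\subset\{\overline X_2-x>v/2\}\cup\{x-\underline X_2>v/2\}$ and controls each piece through a chain of three auxiliary lemmas: a conditioning identity that rewrites $\p^\uparrow_x(B,X_1\in\D y)$ as the unconditioned law given $\{\underline X_1>0,\,X_1=y\}$ weighted by the $h$-transform kernel, a bound $\p^\uparrow_x(|X_1-x|>v)<cv^{-\a}$ obtained from the transition density formula of Caballero--Chaumont, and finally a bound on $\p^\uparrow_x(\overline X_1-x>v)$ via the conditional tail $\overline F(v,y)$ from the paper's Lemma~\ref{lem:bounds}. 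Your argument bypasses the conditioning-on-the-endpoint step entirely: you apply the $h$-transform identity directly to the pathwise event $\{\sup_{t\leq 1}|X_t-y|>w\}$, then use subadditivity of $z\mapsto z^\gamma$ (legitimate because $\gamma=\a\hat\rho<1$ when $\beta\neq -1$) to peel off the base weight $y^\gamma$, which cancels the $y^{-\gamma}$ normalisation, leaving only the mixed moment $\e[|Y_1|^\gamma;\sup|Y_t|>w]$; this is $O(w^{\gamma-\a})$ by a layer-cake split and the strict inequality $\gamma<\a$ (which holds since $\beta\neq 1$). Your route is more self-contained and does not invoke the function $F$ at all; the paper's route has the advantage of reusing the $F$-machinery it has already built for the main theorem. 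One trivial slip: to compress $[0,2]$ into $[0,1]$ via the scaling $(cX_{t/c^\alpha})_{t\geq 0}\sim\p^\uparrow_{cx}$ you want $c^\alpha=1/2$, i.e.\ $c=2^{-1/\a}$ rather than $2^{1/\a}$; the rest of the reduction (including $y>2w$) is unaffected, and in fact the self-similarity step is dispensable since the $h$-transform argument works directly on the horizon $[0,2]$.
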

The proof will be at the end of this section.
Let us note that the restriction $x>v$ can not be removed in the above bound. We start with a simpler result where $h=0$:
\begin{lemma}\label{lem:cond_bound}
There exists $c>0$ such that for all $x>v>0$ we have
\[\p^\uparrow_x(|X_1-x|>v)<cv^{-\a}.\]
\end{lemma}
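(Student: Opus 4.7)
The plan is to exploit the Doob $h$-transform representation of $X$ conditioned to stay positive. It is well known (see e.g.\ \cite{chaumont,caballero2006conditioned}) that on the canonical path space one has
\[
\p^\uparrow_x(A) \;=\; \frac{1}{h(x)}\,\e_x\!\left[h(X_1)\indevent_A;\,1<T_0^-\right],
\]
where $T_0^-=\inf\{t:X_t<0\}$ and $h(y)=y^\gamma$ with $\gamma=\a\rho\in(0,\a)$ is the invariant (excessive) function for the killed process (here I use $\gamma<\a$, which holds because $\beta\neq\pm1$ prevents $\rho\in\{0,1\}$). I will drop the indicator $\indevent_{1<T_0^-}$ and only keep $X_1>0$, obtaining the bound
\[
\p^\uparrow_x(|X_1-x|>v) \;\leq\; \e\!\left[\left(1+\frac{Y}{x}\right)^{\!\gamma}\!;\,|Y|>v,\,Y>-x\right],
\]
where $Y\stackrel{d}{=}X_1^{(0)}$ is a strictly $\a$-stable random variable (using stationarity of increments and $h(x+Y)/h(x)=(1+Y/x)^\gamma$).

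Next I would split the expectation into three regimes. In the negative regime $-x<Y<-v$, the ratio $(1+Y/x)^\gamma\leq 1$, so the contribution is at most $\p(Y<-v)\leq cv^{-\a}$ by the classical stable tail estimate. In the mild positive regime $v<Y\leq x$, the ratio is bounded by $2^\gamma$, giving a contribution $\leq 2^\gamma\p(Y>v)\leq cv^{-\a}$. The delicate term is the large positive regime $Y>x$, where $(1+Y/x)^\gamma\leq (2Y/x)^\gamma$, so the contribution is
\[
\left(\tfrac{2}{x}\right)^{\!\gamma}\e[Y^\gamma;\,Y>x].
\]
Here I would use $f_Y(y)\leq cy^{-\a-1}$ as $y\to\infty$ (which holds since $\beta\neq-1$) to deduce $\e[Y^\gamma;Y>x]\leq cx^{\gamma-\a}$, valid precisely because $\gamma<\a$. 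This yields $cx^{-\a}\leq cv^{-\a}$, where the last inequality uses the standing hypothesis $x>v$. Summing the three contributions gives the claimed bound.

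The main obstacle will be pinning down the admissible exponent $\gamma$ of the harmonic function and the asymptotic behaviour of the stable density: both are classical for $\beta\neq\pm1$ but must be cited carefully. A minor subtlety is that the bound genuinely requires $v<x$ (otherwise the large-$Y$ contribution cannot be dominated by $cv^{-\a}$), which explains the hypothesis of the lemma and also reflects the remark in the paper that the restriction $x>v$ cannot be removed. Proposition~\ref{prop:fluct_bound} will then follow by applying the lemma at time $1$ together with the Markov property at times in $[0,1]$, combined with a standard maximal inequality for the post-time-$1$ fluctuations under $\p_{X_1}^\uparrow$, but that is handled separately in the paper.
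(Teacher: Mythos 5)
Your proof is correct and follows essentially the same route as the paper's: both invoke the Doob $h$-transform semigroup $\p^\uparrow_x(X_1\in\D y)=(y/x)^{\a\rho}\,\p_x(X_1\in\D y,\underline X_1>0)$, drop the survival indicator keeping only $X_1>0$, and then control $(1+Y/x)^{\a\rho}$ in regimes according to the sign and size of $Y$ while using $f_Y(y)\lesssim|y|^{-\a-1}$ and $\a\rho<\a$. The only difference is cosmetic organization of the case split (three increment regimes versus two integrals with one further subdivided at $y=x$); the estimates and the role of $x>v$ are identical.
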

\begin{proof}
Let $\rho=\p(X_1<0)$ be the negativity parameter.
Recall the semigroup of the conditioned process~\cite{caballero2006conditioned}:
\[\p^\uparrow_x(X_1\in\D y)=\frac{y^{\a\rho}}{x^{\a\rho}}\p_x(X_1\in\D y,\underline X_1>0).\]
Hence
\begin{align*}
\p^\uparrow_x(|X_1-x|>v)&=\frac{1}{x^{\a\rho}}\e_x[X_1^{\a\rho};|X_1-x|>v,\underline X_1>0]\\
&\leq \frac{1}{x^{\a\rho}}\e[(X_1+x)^{\a\rho};|X_1|>v,X_1>-x]\\
&=\frac{1}{x^{\a\rho}}\left(\int_{v}^\infty(x+y)^{\a\rho}f(y)\D y+\int_{-x}^{-v}(x+y)^{\a\rho}f(y)\D y\right).
\end{align*}
Recall that $f(y)\leq c|y|^{-\a-1}$ as $y\to\pm\infty$, and hence the first integral is upper bounded by
\[2^{\a\rho}c\int_x^\infty y^{\a\rho-\a-1}\D y+(2x)^{\a\rho}c\int_{v}^x y^{-\a-1}\D y\leq c x^{\a\rho}v^{-\alpha}\]
and the second has a similar bound. The result now follows.
\end{proof}

The following is an immediate consequence of the Doob's $h$-transform representation of the kernel; here $h(x)=x^{\a\rho}$.
\begin{lemma}\label{lem:conditioning}
For any $B\in\mathcal F_1$ it holds that 
\[\p_x^\uparrow(B,X_1\in\D y)=\p_x^\uparrow(X_1\in\D y)\p_x(B|\underline X_1>0,X_1=y)\]
\end{lemma}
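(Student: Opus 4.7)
The plan is to reduce the claim directly to the Doob $h$-transform description of $\mathbb{P}_x^\uparrow$ and a disintegration of the underlying measure $\mathbb{P}_x$.

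First, I would recall from \cite{caballero2006conditioned,chaumont} that $\mathbb{P}_x^\uparrow$ restricted to $\mathcal F_1$ is precisely the $h$-transform of the killed measure $\mathbb{P}_x(\,\cdot\,,\,\underline X_1>0)$ by the invariant function $h(x)=x^{\alpha\rho}$; that is, for every $B\in\mathcal F_1$,
\[
\mathbb{P}_x^\uparrow(B)=\mathbb{E}_x\!\left[\frac{h(X_1)}{h(x)}\,\mathbf 1_B\,\mathbf 1_{\underline X_1>0}\right].
\]
This is exactly the one-time-marginal formula displayed just before Lemma~\ref{lem:conditioning} extended to general $B\in\mathcal F_1$, and it is the standard construction of the process conditioned to stay positive. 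I would simply quote this as the starting point.

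Next, I would apply this identity with the event $B\cap\{X_1\in\mathrm dy\}$ to obtain
\[
\mathbb{P}_x^\uparrow(B,X_1\in\mathrm dy)=\frac{y^{\alpha\rho}}{x^{\alpha\rho}}\,\mathbb{P}_x(B,\underline X_1>0,X_1\in\mathrm dy),
\]
where pulling $h(X_1)=y^{\alpha\rho}$ out is legitimate once one integrates against a test function of $y$. Then I would disintegrate the measure on the right with respect to the (regular) conditional distribution given the pair $(\underline X_1>0,X_1=y)$:
\[
\mathbb{P}_x(B,\underline X_1>0,X_1\in\mathrm dy)=\mathbb{P}_x(B\mid \underline X_1>0,X_1=y)\,\mathbb{P}_x(\underline X_1>0,X_1\in\mathrm dy).
\]
A regular version exists because the underlying path space is Polish, so the conditional probability makes sense.

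Finally, I would substitute back and recognize the factor $(y^{\alpha\rho}/x^{\alpha\rho})\mathbb{P}_x(\underline X_1>0,X_1\in\mathrm dy)$ as $\mathbb{P}_x^\uparrow(X_1\in\mathrm dy)$ via the semigroup formula already recorded in the excerpt. This yields the claimed identity
\[
\mathbb{P}_x^\uparrow(B,X_1\in\mathrm dy)=\mathbb{P}_x^\uparrow(X_1\in\mathrm dy)\,\mathbb{P}_x(B\mid \underline X_1>0,X_1=y).
\]
There is no genuine obstacle here beyond invoking the $h$-transform representation; the only point deserving care is the existence of a regular conditional probability for $B$ given the joint event $\{\underline X_1>0,X_1=y\}$, which is standard on the Skorokhod space.
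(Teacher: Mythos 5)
Your proof is correct, and at its core it uses the same two ingredients as the paper: the Doob $h$-transform structure of $\p_x^\uparrow$ and a disintegration with respect to $X_1$. The one genuine difference is how you obtain the $h$-transform representation of $\p_x^\uparrow$ restricted to $\mathcal F_1$, namely
\[
\p_x^\uparrow(B)=\e_x\Bigl[\tfrac{h(X_1)}{h(x)}\,\ind{B}\,\ind{\underline X_1>0}\Bigr],\qquad B\in\mathcal F_1,
\]
which you invoke as a known fact from the literature. The paper instead derives it on the spot: it writes out the finite-dimensional distribution of $\p_x^\uparrow$ over times $t_1<\cdots<t_k<1$ using the one-step transition kernel $\frac{h(y)}{h(x)}\p_x(X_t\in\D y,\underline X_t>0)$ (which it had already recorded in the proof of Lemma~\ref{lem:cond_bound}), telescopes the product $\frac{h(x_1)}{h(x)}\cdots\frac{h(y)}{h(x_k)}$ down to $\frac{h(y)}{h(x)}$, and reassembles the constraint $\underline X_1>0$ via the Markov property. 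Your route is shorter if one is happy citing Chaumont--Doney for the $\mathcal F_t$-level $h$-transform; the paper's route is self-contained given only the semigroup formula. The subsequent disintegration into $\p_x(B\mid \underline X_1>0, X_1=y)\,\p_x(\underline X_1>0,X_1\in\D y)$ and the identification of $\frac{y^{\alpha\rho}}{x^{\alpha\rho}}\p_x(\underline X_1>0,X_1\in\D y)$ with $\p_x^\uparrow(X_1\in\D y)$ are identical in both arguments, and your remark that a regular conditional probability exists on the Skorokhod space is the right thing to say to make the disintegration rigorous.
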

\begin{proof}
For $0<t_1<\cdots<t_k<1$ we have
\begin{align*}
&\p_x^\uparrow(X_{t_1}\in\D x_1,\ldots,X_{t_k}\in\D x_k,X_1\in\D y)\\
&=\frac{h(x_1)}{h(x)}\p_x(X_{t_1}\in\D x_1,\underline X_{t_1}>0)\times\cdots\times\frac{h(y)}{h(x_k)}\p_{x_k}(X_{1-t_k}\in\D y,\underline X_{1-t_k}>0)\\
&=\frac{h(y)}{h(x)}\p_x(X_{t_1}\in\D x_1,\ldots,X_{t_k}\in\D x_k,X_1\in\D y,\underline X_1>0)\\
&=\p_x^\uparrow(X_1\in\D y)\p_x(X_{t_1}\in\D x_1,\ldots,X_{t_k}\in\D x_k|X_1=y,\underline X_1>0)
\end{align*}
and the result follows.
\end{proof}

\begin{lemma}\label{lem:sup}
There exists $c>0$ such that for all $x>v>0$ we have
\begin{align*}
\p^\uparrow_x(\overline X_1-x>v)<cv^{-\a},\qquad \p^\uparrow_x(x-\underline X_1>v)<cv^{-\a}.
\end{align*}
\end{lemma}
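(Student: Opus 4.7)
The plan is to reduce both tails to Lemma~\ref{lem:cond_bound} through a strong-Markov-at-first-passage argument, combined with the self-similar scaling $(cX_{t/c^\a})_{t\ge 0}$ under $\p^\uparrow_x$ having the law of $(X_t)_{t\ge 0}$ under $\p^\uparrow_{cx}$. Together these yield the \emph{a priori} estimate
\[
\p^\uparrow_z(|X_s-z|>u)\le c\,s\,u^{-\a},\qquad s\in(0,1],\ z>u>0,
\]
obtained by rescaling time by $s$ so that the new starting point $z/s^{1/\a}$ exceeds the new threshold $u/s^{1/\a}$, at which point Lemma~\ref{lem:cond_bound} applies.

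For the supremum I would set $\tau=\inf\{t:X_t>x+v\}$ and write
\[
\p^\uparrow_x(\overline X_1>x+v)\le \p^\uparrow_x(X_1-x>v/2)+\p^\uparrow_x(\tau<1,\,X_1\le x+v/2).
\]
The first term is bounded by $cv^{-\a}$ via Lemma~\ref{lem:cond_bound}. For the second, the strong Markov property at $\tau$ combined with the above \emph{a priori} estimate, applied with $z=X_\tau\ge x+v>v$ and $u=v/2$, shows that the inner conditional probability is at most $c'v^{-\a}$ uniformly in $1-\tau\in[0,1]$. Plugging this back in yields a self-improving inequality
\[
\p^\uparrow_x(\overline X_1>x+v)\le c_1 v^{-\a}+c_2 v^{-\a}\,\p^\uparrow_x(\overline X_1>x+v),
\]
which is solvable whenever $c_2v^{-\a}\le 1/2$; small $v$ is absorbed into the final constant using the trivial upper bound of $1$ on a probability.

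For the infimum I would run the same template with $\sigma=\inf\{t:X_t\le x-v\}$ and the analogous split
\[
\p^\uparrow_x(\underline X_1<x-v)\le \p^\uparrow_x(x-X_1>v/2)+\p^\uparrow_x(\sigma<1,\,X_1>x-v/2).
\]
The Markov step at $\sigma$ now requires a bound on $\p^\uparrow_{X_\sigma}(X_{1-\sigma}-X_\sigma>v/2)$, but since $\beta\in(-1,1)$ allows downward jumps, the overshoot $X_\sigma\in(0,x-v]$ can land below $v/2$ and the \emph{a priori} estimate is then unavailable. I would split further according to $\{X_\sigma>v/2\}$, handled exactly as in the supremum argument, versus $\{X_\sigma\le v/2\}\subset\{\underline X_1\le v/2\}$.

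The main obstacle is controlling the bad subcase $\{\underline X_1\le v/2\}$ without producing a non-contractive recursion. The cleanest route I envisage is to refine the split at level $\delta v$ for a small fixed $\delta\in(0,1)$ and to distinguish the regimes $x\ge 2v$ and $v<x<2v$. In the former one recognises $\p^\uparrow_x(\underline X_1\le\delta v)=\p^\uparrow_x(\underline X_1\le x-(x-\delta v))$ and feeds it back into the same inequality, producing a contraction factor $(v/(x-\delta v))^\a\le(2-\delta)^{-\a}<1$ which closes the estimate; in the latter the trivial bound $1\le(2v)^\a v^{-\a}$ suffices, so that a uniform constant $c$ exists for all $x>v>0$.
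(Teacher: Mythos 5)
Your route is genuinely different from the paper's. The paper invokes the Doob $h$-transform identity of Lemma~\ref{lem:conditioning} to express $\p^\uparrow_x(\overline X_1-x>v)$ as an integral against $\p_x(\,\cdot\mid\underline X_1>0,X_1=x+y)$, the conditional law under the \emph{unconditioned} process, then applies the bounds on $\overline F$ from Lemma~\ref{lem:bounds}; the conditioning to stay positive only enters through the denominator $\p(\underline X_1>-x\mid X_1=y)$, which is bounded away from $0$ once $v$ (hence $x$) is large, while small $v$ is handled trivially. You instead stay with the conditioned semigroup, using the strong Markov property at first passage plus the self-similar rescaling of Lemma~\ref{lem:cond_bound}. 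For the supremum that plan is sound: the entry point satisfies $X_\tau\ge x+v>v$, so your a priori estimate $\p^\uparrow_z(|X_s-z|>u)\le csu^{-\a}$ applies with $z=X_\tau$, $u=v/2$, and the self-improving inequality closes after absorbing small $v$ into the constant.

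For the infimum, however, there is a genuine gap. The regime $v<x<2v$ cannot be dismissed by ``$1\le(2v)^\a v^{-\a}$'': that inequality only asserts $1\le 2^\a$ and gives no control of the form $cv^{-\a}$ for large $v$. Moreover, in the regime $x\ge 2v$ your contraction feeds $P(v):=\p^\uparrow_x(x-\underline X_1>v)$ into $P(x-\d v)$, and the iterates $v\mapsto x-\d v$ increase towards $x/(1+\d)$; the recursion therefore needs an independent base estimate for $v$ near $x/(1+\d)$, i.e.\ a bound on $\p^\uparrow_x(\underline X_1<\ep x)$ for a fixed small~$\ep$. But that is exactly the bad scenario you flagged --- for a two-sided stable process the undershoot $X_\sigma$ can land arbitrarily close to $0$, where your a priori estimate (which requires $z>u$) is unavailable --- so the recursion has no anchor. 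The paper avoids the difficulty entirely by never decomposing at a first-passage time: Lemma~\ref{lem:conditioning} reduces the tail directly to $\overline F$ for the unconditioned process, where Lemma~\ref{lem:bounds} gives the uniform $v^{-\a}$ decay, and the infimum is then ``the same arguments'' by considering $-X$.
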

\begin{proof}
We only show the first statement, since the second follows the same arguments. 
According to Lemma~\ref{lem:conditioning} we find that 
\[\p^\uparrow_x(\overline X_1-x>v)=\int\p^\uparrow_x(X_1\in x+\D y)\p_x(\overline X_1-x>v|\underline X_1>0,X_1=x+y).\]
We may restrict the integration to the interval $[-v/2,v/2]$ in view of Lemma~\ref{lem:cond_bound}.
Thus it is sufficient to establish that 
\[\p(\overline X_1>v|\underline X_1>-x,X_1=y)<c v^{-\a}\]
for all $x>v$ and $y\in[-v/2,v/2]$.
But the quantity on the left is upper bounded by
\[\overline F(v,y)/\p(\underline X_1>-x|X_1=y),\]
where $\overline F(v,y)\leq cv^{-\a}$ according to Lemma~\ref{lem:bounds}; for bounded $v$ the result is obvious. Finally, observe that $\p(\underline X_1>-x|X_1=y)$ is bounded away from~0; here we may use Lemma~\ref{lem:bounds} applied to the process $-X$. The proof is complete.
\end{proof}

\begin{proof}[Proof of Proposition~\ref{prop:fluct_bound}]
Observe that the quantity of interest is upper bounded by
\[\p_x^\uparrow(\overline X_2-x>v/2 \text{ or }x-\underline X_2>v/2).\]
Hence the bound follows from Lemma~\ref{lem:sup}, which also holds for time $2$ instead of~$1$; use e.g.\ self-similarity here.
\end{proof}

%\bibliographystyle{abbrv}
%\bibliography{minimax.bib}

\end{document}